\numberwithin{equation}{section}
\let\al=\alpha
\let\d=\delta
\let\la=\lambda
\let\f=\frac
\let\om=\omega
\let\D=\Delta
\let\th=\theta
\let\pa=\partial
\let\tre=\triangleq
\def\cF{{\mathcal F}}
\def\tu{\widetilde{u}}
\def\bw{\overline{w}}
\def\bh{\overline{h}}
\def\R{\mathbf R}
\def\N{\mathbf N}
\def\cF{\mathcal F}
\def\D7{\langle D_x\rangle^{\frac 72}}
\def\D{\langle D_x\rangle}
\def\Dxi {\langle \xi \rangle}
\def\eqdef{\buildrel\hbox{\footnotesize def}\over =}
\newcommand{\beq}{\begin{equation}}
\newcommand{\eeq}{\end{equation}}
\newcommand{\ben}{\begin{eqnarray}}
\newcommand{\een}{\end{eqnarray}}
\newcommand{\beno}{\begin{eqnarray*}}
\newcommand{\eeno}{\end{eqnarray*}}
\newtheorem{theorem}{Theorem}[section]
\newtheorem{lemma}[theorem]{Lemma}
\newtheorem{proposition}[theorem]{Proposition}
\newtheorem{remark}[theorem]{Remark}
\newtheorem{Theorem}{Theorem}[section]
\begin{document}

\title[Well-posedness of the Prandtl equation]
{Well-posedness of the linearized Prandtl equation around a non-monotonic shear flow}

\author{Dongxiang Chen}
\address{College of Mathematics and Information Science, Jiangxi Normal University, 330022, Nanchang, P. R. China}
\email{chendx020@aliyun.com}

\author{Yuxi Wang}
\address{School of Mathematical Sciences, Peking University, 100871, Beijing, P. R. China}
\email{wangyuxi0422@pku.edu.cn}

\author{Zhifei Zhang}
\address{School of Mathematical Sciences, Peking University, 100871, Beijing, P. R. China}
\email{zfzhang@math.pku.edu.cn}

\date{\today}%19.8.2016

\begin{abstract}
In this paper, we prove the well-posedness of the linearized Prandtl equation around a non-monotonic shear flow  in Gevrey class $2-\th$ for any $\th>0$. This result is almost optimal by the ill-posedness result proved by G\'{e}rard-Varet and Dormy, who construct a class of solution with the growth like $e^{\sqrt{k}t}$ for the linearized Prandtl equation around a non-monotonic shear flow.
\end{abstract}

\maketitle

\section{Introduction}
In this paper, we study the Prandtl equation in $\R_+\times \R_+^2$
\begin{equation}\label{eq:Prandtl-1}
  \left\{
  \begin{aligned}
    &\pa_t u+u \pa_x u +v\pa_{y}u-\pa_{y}^2u+\pa_xp=0,\\
    &\pa_xu+\pa_y v=0,\\
    & u|_{y=0}=v|_{y=0}=0\quad\mbox{and}\quad \displaystyle\lim_{y\to+\infty} u(t,x,y)=U(t,x),\\
     &u|_{t=0}= u_0,
  \end{aligned}
  \right.
\end{equation}
where $(u,v)$ denotes the tangential and normal velocity of the boundary layer flow, and  $(U(t,x), p(t,x))$
is the values on the boundary of the tangential velocity and pressure of the outflow, which satisfies the Bernoulli's law
\[\pa_tU+U\pa_x U+\pa_x p=0.\]
This system introduced by Prandtl \cite{P}  is the foundation of the boundary layer theory. It describes the first order approximation of the velocity field near the boundary in the zero viscosity limit of the Navier-Stokes equations with non-slip boundary condition.  One may check \cite{Olei}  for more introductions on the boundary layer theory.
\smallskip

To justify the zero viscosity limit, one of key step  is to deal with the well-posedness of the Prandtl equation. Due to the lack of horizontal diffusion in (\ref{eq:Prandtl-1}),  the nonlinear term $v\pa_y u$ will lead to one horizontal derivative loss in the process of energy estimate. Up to now, the question of whether the Prandtl equation with general data is well-posed in Sobolev spaces is still open except for some special cases:\smallskip

$\bullet$ Under a monotonic assumption on the tangential velocity of
the outflow, Oleinik \cite{Olei} proved the local
existence and uniqueness of classical solutions to \eqref{eq:Prandtl-1}. With the additional favorable condition on the pressure, Xin and Zhang
\cite{XZ} obtained the global existence of weak solutions to  \eqref{eq:Prandtl-1}.\smallskip

$\bullet$ For the data which is analytic in $x,y$ variables,
Sammartino and Caflisch \cite{SC} established the local well-posedness  of \eqref{eq:Prandtl-1}. Later, the analyticity in $y$ variable was removed by Lombardo, Cannone and Sammartino \cite{LCS}. Zhang and the third author \cite{ZZ} also established the long time well-posedness of (\ref{eq:Prandtl-1}) for small tangential analytic data.\smallskip

Recently,  Alexandre et al. \cite{AWXY} and Masmoudi and Wong \cite{MW} independently develop direct energy method to prove the well-posedness of the Prandtl equation for monotonic data in Sobolev spaces.  Their works might shed some light on the zero viscosity limit problem in Sobolev spaces. See also \cite{KMVW} for the case with multiple monotonicity regions. Recently, we also present an elementary proof by using the paralinearized technique \cite{CWZ}.

On the other hand, G\'{e}rard-Varet and Dormy \cite{GD} proved the ill-posedness in Sobloev spaces for the linearized Prandtl equation around non-monotonic shear flows. The nonlinear ill-posedness was also established in \cite{GN, Guo} in the sense of non-Lipschtiz continuity of the flow. However, G\'{e}rard-Varet and Masmoudi \cite{GM} can prove the well-posedness  of the Prandtl equation (\ref{eq:Prandtl-1})  for a class of data in Gevrey class $\f 74$. In \cite{GM}, the authors conjectured that their result should not be optimal.  The analysis and numerics performed in \cite{GD}  suggest that the optimal exponent may be $s = 2$.
Indeed, G\'{e}rard-Varet and Dormy constructed a class of solution with the growth like $e^{\sqrt{k}t}$ for the linearized Prandtl equation around a non-monotonic shear flow, where $k$ is the tangential frequency.

The goal of this paper is to prove the well-posedness of the linearized Prandtl equation around a non-monotonic shear flow in Gevrey class $2-\th$ for any $\th>0$. This result is almost optimal and in particular implies that the instability mechanism found in \cite{GD} should be severe. The same ideas can be applied to deal with nonlinear Prandtl equation. However, the proof is more involved technically.  So, this will be 
presented in a separable paper in order to present our ideas more clearly here. 
\smallskip

Let $u^s(t,y)$ be the solution of the heat equation
\begin{equation}\label{eq:heat}
  \left\{
  \begin{aligned}
   &\pa_t u^s -\pa_{y}^2u^s=0,\\
    & u^s|_{y=0}=0\quad\mbox{and}\quad \displaystyle\lim_{y\to+\infty} u^s(t,y)=1,\\
     &u^s|_{t=0}= u^s_0(y).
  \end{aligned}
  \right.
\end{equation}
Obviously, $(u^s(t,y),0)$ is a shear flow solution of the Prandtl equation \eqref{eq:Prandtl-1}.
Let us assume that $\pa_yu_0(1)=0$ and for some $c, \delta>0$,
\ben\label{ass:data1}
\begin{split}
&\pa_y^2u_0^s(y)\ge c\quad\text{for}\quad y\in \big[\f12, 2\big],\\
&\big|\pa_yu_0^s(y)\big|\ge c\d e^{-y}\quad \textrm{for}\quad y\in [0,1-\d]\cup\big[1+\d,+\infty).
\end{split}
\een
The linearized Prandtl equation around $(u^s,0)$ takes as follows  
\begin{equation}\label{eq:pran-L}
  \left\{
  \begin{aligned}
    &\pa_t u+u^s\pa_x u+v\pa_{y}u^s-\pa_{y}^2u=0,\\
    &\pa_xu+\pa_y v=0,\\
    &u|_{y=0}=v|_{y=0}=0\quad \mbox{and}\quad\displaystyle\lim_{y\to+\infty}u(t,x,y)=0,\\
    &u(0,x,y)=u_0(x,y).
  \end{aligned}
  \right.
\end{equation}

The main result of this paper is stated as follows.

\begin{Theorem}\label{linearized main thm}
Let $\th\in (0,\f12]$. Assume that  $e^{{\langle D_x\rangle}^{\f12+2\th}}u_0\in H^{\f12,1}_\mu$ with $\pa_y^ku_0|_{y=0}=0$ for $k=0,2$.
Then there exists $T>0$ so that (\ref{eq:pran-L}) has a unique solution $u$ in $[0,T]$, which satisfies \eqref{eq:energy inequ-L}.
In particular, we have
\beno
&u_{\Phi}\in L^\infty(0,T;H^{\f{1}{4}+\th,1}_\mu).
\eeno
Here we denote
\beno
f_\Phi\tre\cF^{-1}\big(e^{\Phi(t,\xi)}\widehat{f}(\xi)\big),\quad \Phi(t,\xi)\tre (1-\lambda t)\Dxi^{\f12+2\theta},
\eeno
and $H^{s,\sigma}_\mu$ is the weighted Sobolev space with $\mu=e^{\f y2}$ which will be introduced later.  
\end{Theorem}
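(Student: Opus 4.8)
The proof is a Gevrey-class energy estimate with a time-dependent weight whose radius shrinks; the crucial point is to reduce the horizontal-derivative loss created by the term $v\partial_y u^s$ via a careful choice of unknown, exploiting the non-degeneracy hypotheses \eqref{ass:data1} and the explicit structure of the shear flow $u^s$ near its critical point $y=1$.

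Let me think about this more carefully...

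\textbf{Strategy of proof.} The plan is to derive, for a well-chosen unknown, a Gevrey-class energy estimate in the weighted space $H^{\f12,1}_\mu$ whose dissipation is generated by $\pa_t e^{\Phi(t,\xi)}$ and whose shrinking radius accounts precisely for the $e^{\sqrt k t}$ growth of \cite{GD}. Taking the Fourier transform in $x$ (dual variable $k$) reduces \eqref{eq:pran-L} to the family of heat--transport problems
\begin{align*}
\pa_t\widehat u+ik\,u^s\,\widehat u+\widehat v\,\pa_y u^s-\pa_y^2\widehat u=0,\qquad \widehat v=-ik\int_0^y\widehat u(t,k,y')\,dy',
\end{align*}
in which the single dangerous term is $\widehat v\,\pa_y u^s$: it carries one full power of $k$, and this is the loss that forces a Gevrey rather than Sobolev estimate. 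The exponent $\f12+2\th>\f12$ in $\Phi$ is chosen so that the damping $-\pa_t\Phi=\la\Dxi^{\f12+2\th}$ dominates the residual $|k|^{\f12}$ loss that survives the structural reductions below; this costs the radius $(1-\la t)$, whence a lifespan $T<\la^{-1}$ and the passage from $H^{\f12,1}_\mu$ to $H^{\f14+\th,1}_\mu$. Throughout, the compatibility $\pa_y^2u_0|_{y=0}=0$ is what makes the boundary terms in the estimate for the good unknown vanish.

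First I would dispose of the bad term away from the critical point via the Oleinik-type good-unknown substitution used in \cite{AWXY,MW,GM}. Since $\pa_y^2u_0^s\ge c$ on $[\f12,2]$, for short time $u^s(t,\cdot)$ is strictly convex near $y=1$ and $\pa_y u^s$ has a single simple zero at a point $y_c(t)$ staying close to $1$; away from it $|\pa_y u^s|\gtrsim\d e^{-y}$ by \eqref{ass:data1}. Fixing a cutoff $\chi$ vanishing near $y_c(t)$ and setting, on $\{\chi=1\}$,
\begin{align*}
w\tre\pa_y u-\f{\pa_y^2u^s}{\pa_y u^s}\,u,
\end{align*}
a direct computation shows that the $\widehat v\,\pa_y u^s$ contribution cancels identically, so $w$ obeys a heat--transport equation whose zeroth- and first-order terms have coefficients built from $\pa_y^k(\pa_y^2u^s/\pa_y u^s)$, all admissible thanks to the lower bound on $\pa_y u^s$ and the weight $\mu=e^{\f y2}$; the latter, tuned to the decay $|\pa_y u^s|\sim e^{-y}$, also controls $\widehat v=-ik\int_0^y\widehat u$ by a weighted Hardy inequality. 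The required Gevrey bilinear and commutator estimates with respect to the weight $e^{\Phi}$ follow from a Bony-type paraproduct decomposition, along the lines of \cite{CWZ}. One recovers $u$ from $w$ by integrating the relation $\pa_y(u/\pa_y u^s)=w/\pa_y u^s$, which is harmless on $\{\chi=1\}$ but singular at $y_c(t)$ — the source of the whole difficulty.

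The core obstacle is the critical layer near $y_c(t)$. There the substitution is unavailable, and one must work with $-\pa_y^2+iku^s$ directly. After a suitable phase conjugation removing the oscillation $e^{ik\int_0^t u^s(s,y_c(s))\,ds}$, one uses $u^s(t,y)-u^s(t,y_c)\simeq\f12\pa_y^2u^s(t,y_c)(y-y_c)^2$ with $\pa_y^2u^s(t,y_c)\ge c$: the operator behaves like the complex harmonic oscillator $-\pa_y^2+ik\,a\,(y-y_c)^2$, whose spectrum lies on the ray $\sqrt{ika}\,(2\N+1)$, hence whose resolvent along the shifted imaginary axis is $O(|k|^{-\f12})$ — exactly the $|k|^{-\f12}$ of smoothing that, set against the $|k|^1$ in $\widehat v\,\pa_y u^s$, leaves precisely the $|k|^{\f12}$ loss absorbed by $\Phi$. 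Concretely I would localize to an $O(|k|^{-\f14})$ neighbourhood of $y_c(t)$ with a $k$-dependent cutoff, establish there the $|k|^{\f12}$ hypocoercivity estimate for the genuine operator by combining the $L^2$ energy identity with its imaginary part — which supplies the weighted control $|k|\int(u^s-u^s(t,y_c))|u|^2\,dy$ — via the uncertainty inequality $\|u\|^2\lesssim|k|^{-\f12}\|\pa_y u\|^2+|k|^{\f12}\|(y-y_c)u\|^2$, and glue it to the outer estimate, carefully tracking the cutoff commutators with $\pa_y^2$ (size $|k|^{\f12}$ on the transition zone) and the size $\sim|k|^{\f14}$ of $\pa_y^2u^s/\pa_y u^s$ on the matching region. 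Summing over $k$ this yields
\begin{align*}
\f{d}{dt}\big(\|w_\Phi\|_{H^{\f12,1}_\mu}^2+\|u_\Phi\|_{H^{\f12,1}_\mu}^2+\cdots\big)+\la\big\|\D^{\f14+\th}w_\Phi\big\|_{H^{\f12,1}_\mu}^2\lesssim\|w_\Phi\|_{H^{\f12,1}_\mu}^2+\|u_\Phi\|_{H^{\f12,1}_\mu}^2+\cdots,
\end{align*}
so that for $\la$ large and $T<\la^{-1}$ (hence $\Phi\ge\f12\Dxi^{\f12+2\th}>0$ on $[0,T]$) Gronwall gives the energy inequality \eqref{eq:energy inequ-L}. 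Existence then follows by running this a priori bound on a frequency-truncated (or slightly $x$-viscous) approximation and passing to the limit; uniqueness is immediate on applying the same estimate to the difference of two solutions. Finally, \eqref{eq:energy inequ-L} in particular yields $u_\Phi\in L^\infty(0,T;H^{\f14+\th,1}_\mu)$.
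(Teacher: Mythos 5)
Your outer-region reduction is essentially the paper's: your unknown $w=\pa_yu-\f{\pa_y^2u^s}{\pa_yu^s}u$ equals $w_2/\pa_yu^s=\pa_yu^s\cdot w_1$, and the cancellation of $v\pa_yu^s$ it produces is exactly the mechanism behind \eqref{eq: linearized Prandtl-w_1} and \eqref{eq:w2}. The genuine divergence, and the genuine gap, is in the critical layer. The paper does not use any resolvent or spectral information about $-\pa_y^2+iku^s$; it uses the G\'erard-Varet--Masmoudi unknown $h=(\pa_y^2u^s)^{-1/2}\pa_yu$, for which the dangerous term satisfies the exact cancellation \eqref{stru}. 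Your replacement of this by "the complex harmonic oscillator has resolvent $O(|k|^{-1/2})$ on a shifted axis, hence a $|k|^{1/2}$ hypocoercivity estimate, which we glue to the outer estimate" is not a proof for two reasons. First, the operator is non-normal, so its resolvent is not controlled by the distance to its spectrum (this pseudospectral degeneracy is precisely what drives the $e^{\sqrt{k}t}$ growth of \cite{GD}); a numerical-range argument does give a bound to the left of the numerical range, but that only controls the homogeneous semigroup of $\pa_t+iku^s-\pa_y^2$, not the actual equation, whose loss comes from the inhomogeneous, nonlocal term $\widehat v\,\pa_yu^s$ with $\widehat v=-ik\int_0^y\widehat u$.

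Second, and decisively, "glue it to the outer estimate, carefully tracking the cutoff commutators" is where the entire difficulty of the paper lives, and your sketch does not engage with it. Localizing in $y$ destroys the cancellation structure and produces the term \eqref{trou}, $\big(\phi_3\phi_3'v,u\big)_{L^2}$, in which $v$ carries a full $\pa_x$ and integrates $u$ across the whole strip, coupling the critical layer to the monotone region. Controlling it is not a matter of absorbing a clean $|k|^{1/2}$ loss into $\la\Dxi^{\f12+2\th}$: the paper needs a hierarchy of unknowns carrying \emph{different} horizontal regularities ($h_\Phi\in L^2H^{\f14,0}$ but $\bw_1$ at $H^{\f34,0}$, $\bw_2$ only at $H^{\f58,0}$), plus the new degenerate-weighted estimate $\varphi^{\f{1+\th_1}2}(w_2)_\Phi\in L^2H^{\f34+\th,0}$ with $\varphi$ vanishing at $a(t)$ (Proposition \ref{prop:w2-LW}), and it is exactly the $\f18$-derivative deficit of $w_2$ and the singular integrals $\int|y'-a(t)|^{-\f12-\f{\al(1+\th_1)}2}dy'$ in Lemma \ref{lem:2h} that dictate the Gevrey index $2-\th$. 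Your proposal neither identifies this term nor supplies a mechanism for it, so the claimed closed energy inequality is unsubstantiated at the one point where the theorem is nontrivial.
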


\begin{remark}
Li and Yang \cite{LY} proved the well-posedness of nonlinear Prandtl equation in Gevrey class 2 for the data with non-degenerate critical point and polynomial decay in $y$.
They used G\'{e}rard-Varet and Masmoudi's framework with an introduction of a new unknown $h_1=\pa_y^2u-\f {\pa_y^3u^s} {\pa_y^2u^s}\pa_yu$,
which is  used to control the regularity of $\pa_y^2u$. In the last section, we will explain how to obtain the well-posedness of \eqref{eq:pran-L} in Gevrey class 2 by using our framework and $h_1$. Two methods should be helpful to understand the complex structure of the Prandtl equation and provide evidence about the conjecture that the well-posedness in Gevrey class 2 is optimal. 
\end{remark}

Let us present some key ingredients of our proof.

\begin{itemize}

\item[1.]  Gevrey regularity estimate in monotonic domain. Motivated by \cite{AWXY}, we will introduce the good unknown $w_1=\pa_y\big(\f {u}{\pa_yu^s}\big)$ to control the horizontal regularity of the solution in this domain,
which satisfies
 \beno
 \pa_t w_1+u^s\pa_x w_1 -\pa_{y}^2 w_1=\pa_y F_1.
 \eeno
The key point is that the equation of $w_1$ does not lose the derivative.

\item[2.] Gevrey regularity estimate in non-monotonic domain.
Because $w_1$ does not make sense in non-monotonic domain,
motivated by \cite{GM}, we introduce  $h=d\pa_yu, d=(\pa_y^2u^s)^{-\f12}$ to control the horizontal regularity of the solution in this domain, which satisfies
\begin{align*}
\pa_th+u^s\pa_xh-\pa_y^2h+d(v\pa_y^2 u^s)=&(\pa_t d-\pa_y^2d)\pa_y u-2\pa_yd\pa_y^2 u.
\end{align*}
All the terms in this equation are good except $d(v\pa_y^2u^s)$. 
The key point is
\ben\label{stru}
\int_{\R^2_+}d(v\pa_y^2u^s)hdxdy=0.
\een
So, this term is also good in the energy estimate. However, the localization in $y$ variable will destroy the cancellation structure \eqref{stru}. In particular, the energy estimate in non-monotonic domain will give rise to a new trouble term
\ben
\big(\phi_3(y)\phi_3'(y)v,u\big)_{L^2},\label{trou}
\een
which can be reduced to control the terms like $(w_i,\pa_xu)_{L^2}, i=1,2$ modulus some lower order terms. Here $w_2=\pa_yu^s\pa_yu-\pa_y^2u^su$ and $\phi_3(y)$ is a cut-off function supported in non-monotonic domain. To control them, we need to use the Gevrey regularity and the following

\item[3.] Anisotropic regularity estimates. The unknowns $w_i$ and $h$ have to work in the functional spaces with different horizontal regularity. Roughly speaking,
\beno
 \phi_3(y)h_\Phi\in L^2\big(0,T; H^{\f 14,0}\big),\quad \phi_1(y)(w_1)_\Phi,\, \phi_3(y)(w_2)_\Phi\in L^2(0,T;H^{\f{3}4,0}). 
\eeno 
Here $\phi_1(y)$ is a cut-off function supported in monotonic domain.

\item[4.]  The derivative gain of $w_1$ can be easily obtained by using Gevrey regularity and good structure of $w_1$. The unknown $w_2=\pa_yu^s\pa_yu-\pa_y^2u^su$ satisfies an equation similar to $w_1$, but with a key trouble term in the $H^{\f 12,0}$ energy estimate, which takes 
\beno
\big(\phi_3(y)\pa_y^2u^s\pa_y^2u_\Phi, \phi_3(y)(w_2)_\Phi\big)_{H^{\f12,0}}.
\eeno
The main difficulty is that one can not deduce $\phi_3(y)\pa_y^2u_\Phi\in L^2(0,T;H^{\f 14,0})$ from $\phi_3(y)h_\Phi\in L^2\big(0,T; H^{\f 14,0}\big)$. However, one can prove  $\phi_3(y)(w_2)_\Phi\in L^2(0,T;H^{\f{5}8,0})$ by using some key structures found in \cite{GM}. As we said above, this estimate is not enough to handle \eqref{trou}. On the other hand, one can prove the same regularity as $w_1$ in the framework of Gevrey class $\f 74$. This may be the main reason why the work \cite{GM} can achieve the well-posedness in Gevrey class $\f 74$.

\item[5.] Improved regularity estimate of $w_2$. Compared with $w_1$, $w_2$ lose $\f 18$-order derivative. However, we find that
$\varphi^{1+\th_1}\D^{\f {3} 4+\th}(w_2)_\Phi\in L^2(0,T;L^2)$, where $\varphi$ is a cut-off function vanishing at critical point.
Compared with the work \cite{GM}, this weighted estimate is completely new, and moreover is enough to handle \eqref{trou}.
The price to pay is to use Gevrey $2-\th$ regularity. 

\item[6.] In our framework, if we use the unknown $h_1$, we can easily deduce $\phi_3(y)\pa_y^2u_\Phi \in L^2(0,T;H^{\f 14,0})$, thus $\phi_3(y)(w_2)_\Phi\in  L^2(0,T;H^{\f{3}4,0})$ and avoid the Gevrey regularity loss.
This will be explained in the last section.

\end{itemize}

Let us conclude the introduction with the following notations. 
Let $\om(y)$ be a nonnegative function in $\R^+$.
We introduce the weighted $L^p$ norm
\beno
&&\|f\|_{L^p_\om}\eqdef \|\om(y)f(x,y)\|_{L^p},
\quad\|f\|_{L^p_{y,\om}}\eqdef \|\om(y)f(y)\|_{L^p}.
\eeno
The weighted anisotropic Sobolev space $H^{s,\ell}_\om$ for $s=k+\sigma$ and $k,\ell\in \N, \sigma\in [0,1)$ consists of all functions $f\in L^2_\om$ satisfying
\beno
\|f\|_{H^{s,\ell}_\om}^2\eqdef \sum_{\al\le k}\sum_{\beta\le \ell}\|\pa_x^\al\D^\sigma\pa_y^\beta f\|_{L^2_\om}^2<+\infty.
\eeno
We denote by $H^\ell_{y,\om}$ the weighted Sobolev space in $\R_+$, which consists of all functions $f\in L^2_{y,\om}$ satisfying
\beno
\|f\|_{H^{\ell}_{y,\om}}^2\eqdef \sum_{\beta\le \ell}\|\pa_y^\beta f\|_{L^2_{y,\om}}^2<+\infty.
\eeno
In the case when $\om=1$, we denote $H^{k,\ell}_\om$ by $H^{k,\ell}$, and $H^\ell_{y,\om}$ by $H^\ell_y$ for the simplicity.

\section{Basic estimates for the shear flow}

Let $u^s(t,y)$ be the solution of the heat equation
\begin{equation}\label{eq:shear}
  \left\{
  \begin{aligned}
   &\pa_t u^s -\pa_{y}^2u^s=0,\\
    & u^s|_{y=0}=0\quad\mbox{and}\quad \displaystyle\lim_{y\to+\infty} u^s(t,y)=1,\\
  &u^s|_{t=0}= u^s_0(y).
  \end{aligned}
  \right.
\end{equation}
\begin{proposition}\label{prop:shear}
Assume that $\pa_y u_0^s\in H^3_{y,\mu}$ and $u_0^s(0)=0, \pa_y^2u_0^s(0)=0$. Then it holds that
for any $t\in [0,+\infty)$,
\begin{align*}
E^s(t)\eqdef \|\pa_y u^s(t)\|_{H^{3}_{y,\mu}}^2+\int_0^t\|\pa_yu^s(\tau)\|_{H^4_{y,\mu}}^2d\tau
\le \|\pa_yu_0^s\|_{H^3_{y,\mu}}^2e^{Ct}.
\end{align*}
Moreover, if for $k=0,1,2,3$,
\beno
|\pa_y^k\big(u^s_0-1\big)(y)|\leq c^{-1}e^{-y}\quad\quad \textrm{for}\quad y\in [0,+\infty),
\eeno
then we have
\beno
&&|\pa_y^k(u^s(t,y)-1)|\leq  Ce^{-y},
\eeno
for $(t,y)\in[0,1]\times [0,+\infty).$
\end{proposition}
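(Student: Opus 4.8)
The plan is to run weighted $L^2$ estimates on $w\eqdef\pa_y u^s$ and its first three $y$-derivatives. First note that, since $u^s|_{y=0}=0$ for all $t$, the equation $\pa_tu^s=\pa_y^2u^s$ forces $\pa_y^2u^s|_{y=0}=\pa_tu^s|_{y=0}=0$, and differentiating this identity in $t$ (using $\pa_t\pa_y^2u^s=\pa_y^4u^s$) gives $\pa_y^4u^s|_{y=0}=0$; together with the decay $w\to0$ as $y\to+\infty$ this means that $v=\pa_y^jw=\pa_y^{j+1}u^s$ solves the heat equation with $\pa_yv|_{y=0}=0$ for $j$ even and $v|_{y=0}=0$ for $j$ odd. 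Multiplying $\pa_tv-\pa_y^2v=0$ by $\mu^2v=e^yv$ and integrating over $\R_+$, every boundary term at $y=0$ vanishes for each $j=0,1,2,3$, and, using $\int e^yv\pa_yv\,dy=-\f12\big(v(0)\big)^2-\f12\|v\|_{L^2_{y,\mu}}^2$, one is left with
\beno
\f12\f{d}{dt}\|v\|_{L^2_{y,\mu}}^2+\|\pa_yv\|_{L^2_{y,\mu}}^2=\f12\big(v(0)\big)^2+\f12\|v\|_{L^2_{y,\mu}}^2.
\eeno
For $j$ odd the trace term is zero, and for $j=0,2$ I would bound it by $\big(v(0)\big)^2=-2\int_0^\infty v\pa_yv\,dy\le\e\|\pa_yv\|_{L^2_{y,\mu}}^2+C_\e\|v\|_{L^2_{y,\mu}}^2$ and absorb the first piece into the dissipation. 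Summing over $j=0,1,2,3$, the dissipative terms control $\int_0^t\sum_{2\le m\le5}\|\pa_y^mu^s\|_{L^2_{y,\mu}}^2\,d\tau$, while the missing $\int_0^t\|\pa_yu^s\|_{L^2_{y,\mu}}^2\,d\tau$ is dominated by $\int_0^t\|\pa_yu^s(\tau)\|_{H^3_{y,\mu}}^2\,d\tau$; Grönwall's inequality then yields $E^s(t)\le C\|\pa_yu_0^s\|_{H^3_{y,\mu}}^2e^{Ct}$. Since $u^s$ is smooth for $t>0$ by parabolic regularity, the formal computation is carried out on $[\e,t]$ and $\e\to0$.

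\noindent\textbf{Pointwise decay.} For the second statement the plan is to use an explicit solution formula. Extend $u^s_0$ to an odd function $\wt u_0$ on $\R$; the compatibility conditions $u^s_0(0)=\pa_y^2u^s_0(0)=0$ make $\wt u_0$ regular enough (of class $C^2$, with $\pa_y^3\wt u_0$ bounded), and by uniqueness $u^s(t,y)=(G_t*\wt u_0)(y)$ for $y>0$, where $G_t(y)=(4\pi t)^{-\f12}e^{-y^2/4t}$. Hence $\pa_y^ku^s(t,y)=(G_t*\pa_y^k\wt u_0)(y)$ for $k=1,2,3$, where $|\pa_y^k\wt u_0(z)|\le c^{-1}e^{-|z|}$ by hypothesis; and for $k=0$ I would compare with the error function, writing $u^s(t,y)-\mathrm{erf}\big(\tfrac{y}{2\sqrt t}\big)=\big(G_t*(\wt u_0-\mathrm{sgn})\big)(y)$ with $|\wt u_0(z)-\mathrm{sgn}(z)|\le c^{-1}e^{-|z|}$, so that $|u^s(t,y)-1|\le c^{-1}(G_t*e^{-|\cdot|})(y)+\mathrm{erfc}\big(\tfrac{y}{2\sqrt t}\big)$. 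Everything then reduces to the two bounds $(G_t*e^{-|\cdot|})(y)\le Ce^{-y}$ and $\mathrm{erfc}\big(\tfrac{y}{2\sqrt t}\big)\le Ce^{-y}$ for $y\ge0$, $t\in[0,1]$.

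\noindent\textbf{The kernel estimate.} This is the one point that needs care: the crude bound $G_t(y-z)\le G_t(y)$ degenerates as $t\to0$, so I would instead complete the square,
\beno
G_t(y-z)e^{-z}=\f{e^{t-y}}{\sqrt{4\pi t}}\,e^{-\f{(z-(y-2t))^2}{4t}},\qquad G_t(y-z)e^{z}=\f{e^{t+y}}{\sqrt{4\pi t}}\,e^{-\f{(z-(y+2t))^2}{4t}},
\eeno
which gives $\int_{\R}G_t(y-z)e^{-z}\,dz=e^{t-y}$ and, via the Gaussian tail bound $\int_{-\infty}^0(4\pi t)^{-\f12}e^{-(z-a)^2/4t}\,dz=\f12\,\mathrm{erfc}\big(\tfrac{a}{2\sqrt t}\big)\le\f12e^{-a^2/4t}$ for $a\ge0$, also $\int_{-\infty}^0G_t(y-z)e^{z}\,dz\le\f12e^{-y^2/4t}$. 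Splitting $G_t*e^{-|\cdot|}$ into $z\ge0$ and $z\le0$ and using $\sup_{y\ge0}\big(y-\tfrac{y^2}{4t}\big)=t\le1$ gives $(G_t*e^{-|\cdot|})(y)\le Ce^{-y}$, and the same tail bound gives $\mathrm{erfc}\big(\tfrac{y}{2\sqrt t}\big)\le e^{-y^2/4t}\le Ce^{-y}$ on $[0,1]$. The main obstacle throughout is bookkeeping rather than any deep difficulty: in the first part, checking that each boundary term indeed vanishes (which rests on propagating the compatibility conditions) and that the trace terms can be absorbed; in the second part, keeping the exponential weight uniformly down to $t=0$, where naive kernel bounds fail and the completion of the square is essential.
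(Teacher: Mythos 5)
Your proposal is correct, and for the pointwise decay it is essentially the paper's argument: the half-line kernel $e^{-(y-y')^2/4t}-e^{-(y+y')^2/4t}$ used in the paper is exactly your odd-reflection convolution $G_t*\wt u_0$, the compatibility conditions $u_0^s(0)=\pa_y^2u_0^s(0)=0$ play the same role (in the paper they justify the integrations by parts giving the formulas for $\pa_y^ku^s$; for you they keep $\pa_y^k\wt u_0$ free of Dirac masses for $k\le 3$), and your completion of the square is the uniform-in-$t$ Gaussian bookkeeping that the paper carries out in its estimates of $I_1,I_2,I_3$. The only genuine difference is in the energy estimate: the paper applies $\pa_t$ and $\pa_t\pa_y$ to the equation and tests against $u^s_t$, $\pa_y^2u^s_t$, $\pa_t\pa_y^3u^s$, converting back to spatial derivatives at the end via $\pa_tu^s=\pa_y^2u^s$, which automatically inherits the Dirichlet condition from $u^s_t|_{y=0}=0$; you instead work directly with $v=\pa_y^{j+1}u^s$, which obliges you to verify that the compatibility conditions propagate ($\pa_y^2u^s|_{y=0}=\pa_y^4u^s|_{y=0}=0$ for $t>0$) so that the boundary terms vanish, and to absorb the trace terms $v(0)^2$ by interpolation. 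Both routes yield the same differential inequalities; yours is marginally more explicit about which boundary terms arise and why they are harmless, at the cost of the compatibility bookkeeping the paper's time-derivative trick avoids.
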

\begin{proof}
Taking $L^2_{y,\mu}$ inner product between the first equation of (\ref{eq:shear}) and $u^s_t$ , we obtain
\beno
\f d {dt}\|\pa_yu^s\|_{L^2_{y,\mu}}^2+\|u^s_t\|_{L^2_{y,\mu}}^2\le C\|\pa_yu^s\|_{L^2_{y,\mu}}^2.
\eeno
Taking the time derivative to the first equation of (\ref{eq:shear}), then taking $L^2_{y,\mu}$ inner product between the resulting equation and $u^s_t$, we get
\beno
&&\f d {dt}\|u^s_t\|_{L^2_{y,\mu}}^2+\|\pa_yu^s_t\|_{L^2_{y,\mu}}^2\le C\|u^s_t\|_{L^2_{y,\mu}}^2.
\eeno
And taking $L^2_{y,\mu}$ inner product between the resulting equation and $\pa_y^2u^s_t$, we get
\begin{align*}
\f d{dt}\|\pa_y\pa_tu^s\|_{L_{y,\mu}^2}+\|\pa_y^2\pa_tu^s\|_{L_{y,\mu}^2}^2\le C\|\pa_y\pa_tu^s\|_{L_{y,\mu}^2}^2+\f12\|\pa_t^2u^s\|_{L_{y,\mu}^2}^2.
\end{align*}
Taking the $\pa_t\pa_y$ to the first equation of (\ref{eq:shear}), then taking $L^2_{y,\mu}$ inner product between the resulting equation and $\pa_t\pa_y^3u^s$, we deduce that
\begin{align*}
\f d{dt}\|\pa_y^2\pa_tu^s\|_{L_{y,\mu}^2}+\|\pa_{t}\pa_y^3u^s\|_{L_{y,\mu}^2}^2\le C\|\pa_y^2\pa_tu^s\|_{L_{y,\mu}^2}^2+\f12\|\pa_y\pa_t^2u^s\|_{L_{y,\mu}^2}^2.
\end{align*}
Using $\pa_tu^s=\pa_y^2u^s$, we deduce from Gronwall's inequality that
\beno
&&\|\pa_yu^s(t)\|_{L^2_{y,\mu}}^2+\|u^s_t(t)\|_{L^2_{y,\mu}}^2+\|\pa_y\pa_tu^s\|_{L_{y,\mu}^2}+\|\pa_y^2\pa_tu^s\|_{L_{y,\mu}^2}^2
\le \|\pa_yu_0^s\|_{H^3_{y,\mu}}^2e^{Ct},
\eeno
from which and $\pa_tu^s=\pa_y^2u^s$, it follows that
\beno
E^s(t)\le \|\pa_yu_0^s\|_{H^3_{y,\mu}}^2e^{Ct}.
\eeno

For the  pointwise estimates, we need to use the representation formula of the solution
\begin{align*}
u^s(t,y)&=\frac1{2\sqrt{\pi t}}\int_0^{+\infty}\Big(e^{-\frac{(y-y')^2}{4t}}-e^{-\frac{(y+y')^2}{4t}}\Big)u_0^s(y')dy'.
\end{align*}
We write
\begin{align*}
u^s(t,y)-1=&-\f{1}{2\sqrt{\pi t}}\int_0^{+\infty}e^{-\f{(y+y')^2}{4t}}u^s_0(y')dy'+\f{1}{2\sqrt{\pi t}}\int_0^{+\infty}e^{-\f{(y-y')^2}{4t}}(u^s_0(y')-1)dy'\\
&+\Big( \f{1}{2\sqrt{\pi t}}\int_0^{+\infty}e^{-\f{(y-y')^2}{4t}}dy'-1\Big)\\
\triangleq&I_1+I_2+I_3.
\end{align*}
The result is obvious for $|y|\le 4$. So, we assume $y\ge 4\ge 4t$.
Thanks to  $|u_0^s(y)|\leq C$, it follows that
\begin{align*}
|I_1|\leq& \f{1}{2\sqrt{\pi t}}\int_0^{+\infty}e^{-\f{y^2}{4t}}~e^{-\f{2yy'+(y')^2}{4t}}|u^s_0(y')|dy'\\
\leq&\f{1}{2\sqrt{\pi t}}e^{-\f{y^2}{4t}}\int_0^{+\infty}e^{-\f{(y')^2}{4t}}|u^s_0(y')|dy'\\
\leq &Ce^{-y}.
\end{align*}
Thanks to $|u_0^s(y)-1|\leq c^{-1}e^{-y}$, we infer that
\begin{align*}
|I_2|\leq& e^{-y} \f{1}{2\sqrt{\pi t}}\int_0^{+\infty} e^{-\f{(y-y')^2}{4t}}e^{y-y'}~|u^s_0(y')-1|~e^{y'}dy'\\
\leq&
Ce^{-y}\f{1}{2\sqrt{\pi t}}\int_{0}^{+\infty}e^{-\f{y'^2}{4t}}e^{y'}dy'\\
\leq&
Ce^{-y}e^t\int_{0}^{+\infty}e^{-(\xi-\sqrt{t})^2}d\xi\\
\leq& Ce^{-y}.
\end{align*}
For $I_3$, we have
\begin{align*}
|I_3|\leq& \Big|\f{1}{\sqrt{\pi }}\int_{-\f{y}{2\sqrt{t}}}^{+\infty} e^{-\xi^2}d\xi-1\Big|=\f{1}{\sqrt{\pi }}\int_{\f{y}{2\sqrt{t}}}^{+\infty} e^{-\xi^2}d\xi.
\end{align*}
If $2\sqrt t\leq 1$, then
\begin{align*}
|I_3|\leq C \int_{\f{y}{2\sqrt{t}}}^{+\infty} e^{-\xi}d\xi\leq C e^{-\f{y}{2\sqrt t}}\leq Ce^{-y},
\end{align*}
and if $2\sqrt t\geq 1$ and $y\geq 4t$, then
\begin{align*}
|I_3|\leq C \int_{\f{y}{2\sqrt{t}}}^{+\infty} e^{-2\sqrt{t}\xi}d\xi \leq \f{C}{2\sqrt{t}}e^{-y}\leq Ce^{-y}.
\end{align*}
Putting the estimates of $I_1-I_3$ together,  we deduce that
\beno
|u^s(t,y)-1|\le Ce^{-y}.
\eeno

Thanks to $u_0^s(0)=0$ and $\pa_y^2u_0^s(0)=0$, we get by integration by parts that
\begin{align*}
&\pa_{y} u^s(t,y)=\frac1{2\sqrt{\pi t}}\int_0^{+\infty}\Big(e^{-\frac{(y-y')^2}{4t}}+e^{-\frac{(y+y')^2}{4t}}\Big)\pa_yu_0^s(y')dy',\\
&\pa_y^2u^s(t,y)=\frac1{2\sqrt{\pi t}}\int_0^{+\infty}\Big(e^{-\frac{(y-y')^2}{4t}}-e^{-\frac{(y+y')^2}{4t}}\Big)\pa_y^2u_0^s(y')dy',\\
&\pa_y^3u^s(t,y)=\frac1{2\sqrt{\pi t}}\int_0^{+\infty}\Big(e^{-\frac{(y-y')^2}{4t}}+e^{-\frac{(y+y')^2}{4t}}\Big)\pa_y^3u_0^s(y')dy'.
\end{align*}
Then in the same derivation as in $I_2$, we have for $k=1,2,3,$
\beno
|\pa_y^ku^s(t,y)|\le Ce^{-y} \quad \text{for}\, (t,y)\in [0,1]\times [0,+\infty).
\eeno

This finishes the proof of the proposition.
\end{proof}

\begin{lemma}\label{lem:shear}
Let $u_0^s(y)$ be as in Proposition \ref{prop:shear}. If $u_0^s(y)$ satisfies \eqref{ass:data1}, then
there exists $T_1>0$ so that for any $t\in [0,T_1]$,
\beno
&&\pa_y^2u^s(t,y)\ge \f c2\quad\text{for}\quad y\in \big[\f12, 2\big],\\
&&\pa_yu^s(t,y)\ge \f c 2\d e^{-y}\quad \textrm{for}\quad y\in [0,1-\d]\cup\big[1+\d,+\infty).
\eeno
\end{lemma}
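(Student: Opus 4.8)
\textbf{Proof plan for Lemma \ref{lem:shear}.} The plan is to combine the pointwise upper bounds $|\pa_y^k(u^s(t,y)-1)|\le Ce^{-y}$ for $k=1,2,3$ from Proposition \ref{prop:shear} with an elementary continuity-in-time argument, exploiting that the two inequalities in \eqref{ass:data1} hold strictly at $t=0$. First I would fix attention on the compact interval $y\in[\f12,2]$: here the assumption gives $\pa_y^2u_0^s(y)\ge c$, and I want to show $\pa_y^2u^s(t,y)\ge \f c2$ for small $t$. The natural way is to estimate $\pa_y^2u^s(t,y)-\pa_y^2u_0^s(y)$; using $\pa_tu^s=\pa_y^2u^s$ and hence $\pa_t\pa_y^2u^s=\pa_y^4u^s$, write
\beno
\pa_y^2u^s(t,y)-\pa_y^2u_0^s(y)=\int_0^t\pa_y^4u^s(\tau,y)d\tau,
\eeno
and control the right-hand side by $\int_0^t\|\pa_y^4u^s(\tau)\|_{L^\infty_y}d\tau$. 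Since $\|\pa_yu^s(\tau)\|_{H^4_{y,\mu}}$ is integrable in time with bound $E^s(\tau)\le \|\pa_yu_0^s\|_{H^3_{y,\mu}}^2e^{C\tau}$ by Proposition \ref{prop:shear}, and $\mu=e^{y/2}\ge 1$, the one-dimensional Sobolev embedding $H^2_y\hookrightarrow L^\infty_y$ gives $\|\pa_y^4u^s(\tau)\|_{L^\infty_y}\lesssim\|\pa_yu^s(\tau)\|_{H^4_{y,\mu}}$, so $\sup_{y\in[\f12,2]}|\pa_y^2u^s(t,y)-\pa_y^2u_0^s(y)|\le C\sqrt t\,(E^s(t))^{1/2}\to 0$ as $t\to 0$. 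Choosing $t$ small enough that this quantity is at most $\f c2$ yields the first claim on an interval $[0,T_1']$.

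For the second claim I would split the half-line into the bounded part $[0,1-\d]\cup[1+\d,M]$ and the tail $[M,+\infty)$, for a large constant $M$ to be fixed. On the bounded part the same argument applies verbatim: $|\pa_yu^s(t,y)-\pa_yu_0^s(y)|\le\int_0^t\|\pa_y^3u^s(\tau)\|_{L^\infty_y}d\tau\le C\sqrt t\,(E^s(t))^{1/2}$, and since $|\pa_yu_0^s(y)|\ge c\d e^{-y}\ge c\d e^{-M}$ there with a definite sign (positivity follows because $\pa_yu_0^s$ is continuous, vanishes only at $y=1$ by the hypothesis $\pa_yu_0(1)=0$ together with the lower bound away from $y=1$, and $\pa_yu^s_0\to 1>0$ at infinity, so $\pa_yu_0^s>0$ on $[0,1)\cup(1,\infty)$), choosing $t\le T_1''$ small makes $\pa_yu^s(t,y)\ge \f c2\d e^{-y}$ on this bounded set. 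On the tail $y\ge M$, I instead use the upper bound from Proposition \ref{prop:shear} applied to $\pa_y(u^s-1)$: actually I need a \emph{lower} bound, so here I write $\pa_yu^s(t,y)=\pa_yu_0^s(y)+\int_0^t\pa_y^3u^s(\tau,y)d\tau$ and use the pointwise bound $|\pa_y^3u^s(\tau,y)|\le Ce^{-y}$ (valid for $\tau\in[0,1]$ by Proposition \ref{prop:shear}, provided $u_0^s$ also satisfies the exponential-decay hypothesis there, which is consistent with \eqref{ass:data1}) together with $|\pa_yu_0^s(y)|\ge c\d e^{-y}$; this gives
\beno
\pa_yu^s(t,y)\ge c\d e^{-y}-Ct e^{-y}=(c\d-Ct)e^{-y}\ge \tfrac c2\d e^{-y}
\eeno
once $t\le \d/(2C)=:T_1'''$, uniformly in $y\ge M$ (in fact for all $y\ge 1+\d$, which subsumes the bounded piece but I keep the split for clarity). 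Taking $T_1=\min\{1,T_1',T_1'',T_1'''\}$ finishes the proof.

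\textbf{Main obstacle.} The only delicate point is not any single estimate but making sure the two regimes (compact $y$ versus the exponential tail) are handled by \emph{compatible} bounds: on compact sets one can afford the crude $H^4_{y,\mu}\hookrightarrow L^\infty$ control which loses the exponential weight, whereas near $y=+\infty$ one genuinely needs the weighted/pointwise decay $|\pa_y^3u^s|\le Ce^{-y}$ so that the perturbation stays below the $e^{-y}$ threshold of the initial data — a uniform $L^\infty$ bound independent of $y$ would be useless there. One should also be a little careful that the sign of $\pa_yu_0^s$ is positive (not merely its absolute value bounded below), which follows from continuity plus the single isolated zero at $y=1$; this is implicitly what lets us upgrade $|\pa_yu^s|\ge\f c2\d e^{-y}$ to the stated $\pa_yu^s\ge \f c2\d e^{-y}$. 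Everything else is a routine application of Proposition \ref{prop:shear} and the fundamental theorem of calculus in $t$.
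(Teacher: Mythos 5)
Your proposal is correct and follows essentially the same route as the paper: write $\pa_y^k u^s(t,y)-\pa_y^k u_0^s(y)=\int_0^t \pa_y^{k+2}u^s(\tau,y)\,d\tau$ for $k=1,2$ and make the time integral small using the bounds of Proposition \ref{prop:shear}. Your separate treatment of the tail $y\to+\infty$ via the pointwise bound $|\pa_y^3u^s(\tau,y)|\le Ce^{-y}$ is in fact the refinement that the paper's displayed estimate $Ct^{1/2}\|\pa_y^3u^s\|_{L^2_tH^1_y}$ (uniform in $y$, hence useless against the threshold $\d e^{-y}$ for large $y$) leaves implicit; the only slip is your justification of the sign of $\pa_yu_0^s$ away from $y=1$ --- it is $u_0^s$, not $\pa_yu_0^s$, that tends to $1$ at infinity, so positivity there is really a tacit standing assumption on the profile rather than a consequence of \eqref{ass:data1}.
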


\begin{proof}
We have
\beno
&&\pa_yu^s(t,y)=\pa_yu_0^s(y)+\int_0^t\pa_t\pa_yu^s(\tau,y)d\tau,\\
&&\pa_y^2u^s(t,y)=\pa_y^2u_0^s(y)+\int_0^t\pa_t\pa_y^2u^s(\tau,y)d\tau.
\eeno
Notice that
\beno
&&\Big|\int_0^t\pa_t\pa_yu^s(\tau,y)d\tau\Big|\le Ct^\f12\|\pa_y^3u^s\|_{L^2_tH^1_y},\\
&&\Big|\int_0^t\pa_t\pa_y^2u^s(\tau,y)d\tau\Big|\le Ct^\f12\|\pa_y^4u^s\|_{L^2_tH^1_y}.
\eeno
Then the lemma follows from Proposition \ref{prop:shear} and \eqref{ass:data1}.
\end{proof}

\section{Introduction of good unknowns}

An essential difficulty solving the Prandtl equations is the loss of one derivative in the horizontal direction $x$ induced by the term $v\pa_yu^s$. To eliminate the trouble term $\pa_y u^s v$ in (\ref{eq:pran-L}),  it is natural to introduce a good unknown $w_1$ defined by
\beno
w_1\eqdef\pa_y\big(\f{u}{\pa_y u^s}\big),
\eeno
which is motivated by the work \cite{AWXY}.
Then a direct calculation gives
\begin{equation}\label{eq: linearized Prandtl-w_1}
  \left\{
  \begin{aligned}
    &\pa_t w_1+u^s\pa_x w_1 -\pa_{y}^2 w_1=\pa_y F_1,\\
    & \pa_yw_1|_{y=0}=0\quad \mbox{and}\quad\displaystyle\lim_{y\to+\infty} w_1=0,\\
     &w_1|_{t=0}= w_0(x,y),
  \end{aligned}
  \right.
\end{equation}
where $F_1$ is given by
\beno
&&F_1=u\pa_t\big(\f{1}{\pa_y u^s}   \big)-\big[\pa_y^2,\f{1}{\pa_y u^s}\big]u.
\eeno
Here we used the fact that $\pa_y^2 u=0$ on $y=0$, which can be seen from (\ref{eq:pran-L}).

Notice that $w_1$ is only well-defined in the monotonic domain. While, Lemma \ref{lem:shear} tells us
\ben\label{ass: linear mono}
|\pa_yu^s(t,y)|\ge \f {c\d} 2 e^{-y}\quad \text{for}\quad (t,y)\in [0,T_1]\times \big([0,1-\d]\cup[1+\d,+\infty)\big).
\een
Then it is natural to introduce a cut-off good known
\begin{align*}
&\bw_1\triangleq e^{-\f{y}{2}}\phi_1(y)\pa_y\big(\f{u}{\pa_yu^s}\big)\tre\psi_1(y)\pa_y(\f{u}{\pa_yu^s}),
\end{align*}
where $\phi_1(y)\in C^\infty(\mathbb{R}_{+})$ with the support included
in $[0,1-\d]\cup[1+\d,+\infty)$ and $\phi_1(y)=1$ in $\big[0,1-2\d\big]\cup\big[1+2\d,+\infty\big]$. A direct calculation shows
\begin{align}\label{eq:w_1}
&\pa_t\bw_1+u^s\pa_x\bw_1-\pa_y^2\bw_1=[\psi_1(y),\pa_y^2]w_1+\psi_1(y)\pa_yF_1.
\end{align}

To control the regularity of the solution  in the non-monotonic domain,  we need to use the non-degenerate condition
\begin{align}\label{ass:non}
\pa_y^2u^s(t,y)\ge \f c 2\quad\mbox{for}\quad(t,y)\in[0,T_1]\times [\f12,2].
\end{align}
Motivated by \cite{GM}, we introduce a good unknown $h$ defined by
\begin{align*}
h\eqdef d\pa_y u,
\end{align*}
where $d(t,y)=\phi_3(y)(\pa_y^2u^s)^{-1/2}$ and $\phi_3(y)$ is  a cut-off function supported in  $[\f12,2]$ and $\phi_3(y)=1$ as $y\in[\f34,\f74]$.
Then $h$ satisfies
\begin{align}\label{eq:h}
\pa_th+u^s\pa_xh-\pa_y^2h+d(v\pa_y^2 u^s)=&(\pa_t d-\pa_y^2d)\pa_y u-2\pa_yd\pa_y^2 u.
\end{align}

To propagate the regularity of the solution from monotonic domain to non-monotonic domain, we need to introduce another good unknown $\bw_2$
\ben
\bw_2\eqdef \psi_2(y)\big(\pa_y u^s\pa_yu-u\pa_y^2 u^s\big)\tre \psi_2(y)w_2,
\een
where  $\psi_2(y)\in C_0^\infty(\mathbb{R}_{+})$ with the support included
in $[1-3\d, 1+3\d]$ and $\psi_2(y)=1$ in $\big[1-2\d, 1+2\d]$.
It is easy to check that
\begin{align}
&\pa_t\bw_2+u^s\pa_x\bw_2-\pa_y^2\bw_2=[\psi_2(y),\pa_y^2]w_2+\psi_2(y)F_2,\label{eq:w2}
\end{align}
where
\begin{align*}
F_2=\pa_t\pa_y u^s\pa_yu+[\pa_y u^s,\pa_y^2]\pa_y u-u\pa_t\pa_y^2 u^s-[\pa_y^2 u^s,\pa_y^2]u.
\end{align*}
In fact, $w_1$ and $w_2$ are basically equivalent in the monotonic domain by the relation
\begin{align*}
w_2=(\pa_y u^s)^2w_1.
\end{align*}
This in particular implies that

\begin{lemma}\label{lem: linearized w12-1}
It holds that
\begin{align*}
&\|1_{I_1}(y)(w_1)_\Phi\|_{H^{\f 12,0}}\leq C\|(\bw_2)_\Phi\|_{H^{\f{1}{2},0}},\\
&\|1_{I_2}(y)(w_2)_\Phi\|_{H^{\f{1}{2},0}}\leq C\|(\bw_1)_\Phi\|_{H^{\f 12,0}},
\end{align*}
where $I_1=\text{supp}\phi_1'$ and $I_2=\text{supp}\psi_2'$.
\end{lemma}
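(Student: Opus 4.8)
The plan is to exploit the algebraic identity $w_2=(\pa_y u^s)^2 w_1$, which holds pointwise wherever both quantities make sense, i.e.\ on the monotonic domain $[0,1-\d]\cup[1+\d,+\infty)$. Note that $I_1=\mathrm{supp}\,\phi_1'\subset([1-2\d,1-\d]\cup[1+\d,1+2\d])$ and $I_2=\mathrm{supp}\,\psi_2'\subset([1-3\d,1-2\d]\cup[1+2\d,1+3\d])$; crucially, both of these sets lie in the overlap region where the cut-offs $\phi_1$ and $\psi_2$ of the \emph{other} unknown are identically $1$. More precisely, on $I_1$ one has $\psi_2(y)=1$ (since $I_1\subset[1-2\d,1+2\d]$), so $\bw_2=w_2=(\pa_yu^s)^2w_1$ there; and on $I_2$ one has $\phi_1(y)=1$ (since $I_2\subset[0,1-2\d]\cup[1+2\d,+\infty)$), so $\bw_1=\psi_1 w_1=e^{-y/2}w_1$ there, hence $w_1=e^{y/2}\bw_1$ on $I_2$, and then $1_{I_2}w_2=1_{I_2}(\pa_yu^s)^2 e^{y/2}\bw_1$.

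For the first inequality I would write $1_{I_1}(y)(w_1)_\Phi = 1_{I_1}(y)\,a(t,y)\,(\bw_2)_\Phi + [\text{commutator}]$, where $a(t,y)=(\pa_yu^s)^{-2}1_{I_1}(y)$ is smooth and bounded (here we use Lemma~\ref{lem:shear}, namely $|\pa_yu^s|\ge \tfrac{c\d}{2}e^{-y}\ge c'>0$ on the compact set $I_1$ for $t\in[0,T_1]$, together with the $H^3_{y,\mu}$ bound on $\pa_yu^s$ from Proposition~\ref{prop:shear} to control derivatives of $a$ in $y$). Since $\Phi(t,\xi)=(1-\lambda t)\Dxi^{\f12+2\th}$ depends only on $\xi$, multiplication by the $x$-independent function $a(t,y)$ commutes with the Fourier multiplier $e^{\Phi}$ in $x$; so in fact $(a\,g)_\Phi = a\,g_\Phi$ exactly, with no commutator at all. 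Then $\|1_{I_1}(w_1)_\Phi\|_{H^{1/2,0}}=\|a\,1_{I_1}(\bw_2)_\Phi\|_{H^{1/2,0}}\le C\|(\bw_2)_\Phi\|_{H^{1/2,0}}$, where the last step is the standard fact that multiplication by a function that is smooth and bounded with bounded $y$-derivatives is bounded on $H^{1/2,0}$ (the $H^{1/2,0}$ norm only involves $\langle D_x\rangle^{1/2}$ in $x$ and no $y$-derivatives, so this is just Minkowski/Plancherel in $x$ followed by the pointwise $L^2_y$ bound $\|a\,f\|_{L^2_y}\le\|a\|_{L^\infty_y}\|f\|_{L^2_y}$). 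The second inequality is identical: $1_{I_2}(w_2)_\Phi = 1_{I_2}(y)\,b(t,y)\,(\bw_1)_\Phi$ with $b(t,y)=1_{I_2}(y)(\pa_yu^s)^2 e^{y/2}$ smooth and bounded on the compact set $I_2$, so $\|1_{I_2}(w_2)_\Phi\|_{H^{1/2,0}}\le C\|(\bw_1)_\Phi\|_{H^{1/2,0}}$.

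The main (and essentially only) subtlety is making sure the cut-off functions nest correctly so that the identity $w_2=(\pa_yu^s)^2w_1$ can be transferred between the barred unknowns without leftover terms: one must check that $I_1$ sits inside the plateau $\{\psi_2=1\}$ and that $I_2$ sits inside the plateau $\{\phi_1=1\}$, which forces the relation $3\d$ versus $2\d$ versus $\d$ in the definitions of $\phi_1,\psi_1,\psi_2$ — this is exactly why those radii were chosen as they were. Everything else is routine: boundedness of $\pa_yu^s$ from below on compact subsets of the monotonic domain (Lemma~\ref{lem:shear}) gives smoothness of the coefficients $a,b$, and the $x$-independence of $\Phi$ means the Gevrey weight passes through the multiplication untouched, so no commutator estimate is needed and the claimed bounds follow with constants depending only on $c,\d,T_1$ and the shear-flow energy $E^s$.
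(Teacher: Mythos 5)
Your proposal is correct and follows exactly the route the paper intends: the lemma is stated in the paper as an immediate consequence of the identity $w_2=(\pa_yu^s)^2w_1$ together with the nesting of the cut-offs ($I_1\subset\{\psi_2=1\}$, $I_2\subset\{\phi_1=1\}$) and the lower bound on $|\pa_yu^s|$ from \eqref{ass: linear mono}, with no separate proof given. Your verification that the coefficients depend only on $(t,y)$ and hence commute with $e^{\Phi(t,D_x)}$, and that the $H^{\f12,0}$ norm involves no $y$-derivatives, fills in the details correctly.
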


Let $a(t)$  be a critical point of $u^s(t,y)$, i.e.,
\beno
\pa_yu^s(t,a(t))=0.
\eeno
Therefore, $a(t)$ satisfies
\beno
\pa_ta(t)=-\f {\pa_t\pa_yu^s(t,a)} {\pa_y^2u^s(t,a)},\quad a(0)=1.
\eeno
By Proposition \ref{prop:shear}, there exists $T_2>0$ so that
\ben\label{ass:critical}
|a(t)-1|\le 2\d\quad\text{for}\quad t\in [0,T_2].
\een
Then $u$ can be represented in terms of $w_1, w_2$. More precisely,

\begin{lemma}\label{lem:u-decom}
We can decompose $u$ as $u=u_1+u_2$, where
\begin{align*}
u_1=
\left\{
\begin{aligned}
&\pa_y u^s\int_{0}^{y}\phi_1w_1dy'\quad\text{for}\quad y<1-2\d,\\
&\pa_y u^s\Big(\int_{0}^{1-2\d}\phi_1w_1dy'+\int_{1-2\d}^{y'}\f{\bw_2}{(\pa_y u^s)^2}dy'\Big)\quad\text{for}\quad 1-2\d \leq y<a(t),\\
&\pa_y u^s\Big(\int_{1+2\d}^y\f{\bw_2}{(\pa_y u^s)^2}dy'+\int_{2}^{1+2\d}\phi_1w_1dy'\Big)\quad\text{for}\quad a(t) < y<1+2\d,\\
&\pa_y u^s\int_{2}^{y}\phi_1w_1dy' \quad\text{for}\quad y\geq 1+2\d,
\end{aligned}
\right.
\end{align*}
and
\begin{align*}
u_2=\pa_y u^s \f{u(t,x,2)}{\pa_y u^s(t,2)}1_{\{y>a(t)\}}(y).
\end{align*}
\end{lemma}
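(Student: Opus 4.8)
The plan is to reconstruct $u$ from $w_1,w_2,\bw_2$ by integrating, in the variable $y$, the two elementary identities
\[\pa_y\Big(\f{u}{\pa_y u^s}\Big)=w_1\quad\text{and}\quad \pa_y\Big(\f{u}{\pa_y u^s}\Big)=\f{w_2}{(\pa_y u^s)^2},\]
the first being the definition of $w_1$ and the second a consequence of the pointwise relation $w_2=(\pa_y u^s)^2w_1$ (equivalently, of a direct computation of $\pa_y(u/\pa_y u^s)$). Combining \eqref{ass: linear mono}, Lemma \ref{lem:shear} and \eqref{ass:critical}, one first checks that, for $t$ in a suitable interval, $\pa_y u^s(t,\cdot)$ vanishes on $[0,+\infty)$ only at $y=a(t)$, so that $u/\pa_y u^s$ is a well-defined smooth function of $y$ on each of $[0,a(t))$ and $(a(t),+\infty)$ and the two identities hold there; moreover $u|_{y=0}=0$ together with $\pa_y u^s(t,0)\neq 0$ gives $(u/\pa_y u^s)|_{y=0}=0$.

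Next I would treat the two ``monotonic'' blocks. On $[0,1-2\d]$ and on $[1+2\d,+\infty)$ one has $\phi_1\equiv 1$, hence $w_1=\phi_1w_1$ there; integrating the first identity from $y=0$ on the lower block gives $u=\pa_y u^s\int_0^y\phi_1w_1\,dy'$, which is the claimed formula since $u_2=0$ for $y<1-2\d<a(t)$. On the upper block the fundamental theorem of calculus gives $\int_2^y\phi_1w_1\,dy'=\f{u}{\pa_y u^s}(t,x,y)-\f{u(t,x,2)}{\pa_y u^s(t,2)}$, so multiplying by $\pa_y u^s$ and adding $u_2=\pa_y u^s\,\f{u(t,x,2)}{\pa_y u^s(t,2)}$ (legitimate since $y>a(t)$ there) reconstructs $u$. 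In passing this records the two boundary values
\[\f{u}{\pa_y u^s}(t,x,1-2\d)=\int_0^{1-2\d}\phi_1w_1\,dy',\qquad \f{u}{\pa_y u^s}(t,x,1+2\d)=\int_2^{1+2\d}\phi_1w_1\,dy'+\f{u(t,x,2)}{\pa_y u^s(t,2)}.\]

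Finally, on the two blocks adjacent to the critical point, $1-2\d\le y<a(t)$ and $a(t)<y\le 1+2\d$, I would use that $\psi_2\equiv 1$ on $[1-2\d,1+2\d]$, so $\bw_2=w_2$ there, and integrate the second identity from $y=1-2\d$, respectively from $y=1+2\d$, substituting the boundary values just found; this yields exactly the two middle lines of the formula for $u_1$, with $u_2=0$ on the first block and $u_2=\pa_y u^s\,u(t,x,2)/\pa_y u^s(t,2)$ on the second. The integrals $\int_{1-2\d}^{y}\bw_2/(\pa_y u^s)^2\,dy'$ and $\int_{1+2\d}^{y}\bw_2/(\pa_y u^s)^2\,dy'$ are convergent for every $y\neq a(t)$, which is all the statement requires. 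The only delicate point is the bookkeeping of the constants of integration across the joints $1\mp 2\d$ and across the critical point: the term $u_2$, and in particular its factor $1_{\{y>a(t)\}}$, is precisely what carries the residual constant $u(t,x,2)/\pa_y u^s(t,2)$ generated by anchoring the upper monotonic integral at $y=2$ rather than at $+\infty$; keeping this straight, together with checking that the various cut-offs equal $1$ on the relevant intervals, is the main (though elementary) obstacle.
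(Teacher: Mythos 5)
Your argument is correct and is exactly the intended one (the paper states this lemma without proof): integrating $\pa_y(u/\pa_y u^s)=w_1=w_2/(\pa_y u^s)^2$ on each block where $\pa_y u^s\neq 0$, anchoring at $y=0$ (using $u|_{y=0}=0$) and at $y=2$, and matching constants at $y=1\mp 2\d$ using $\phi_1\equiv 1$ on $[0,1-2\d]\cup[1+2\d,\infty)$ and $\psi_2\equiv 1$ on $[1-2\d,1+2\d]$. Your bookkeeping of the residual constant $u(t,x,2)/\pa_y u^s(t,2)$ carried by $u_2=\pa_y u^s\,\f{u(t,x,2)}{\pa_y u^s(t,2)}1_{\{y>a(t)\}}$ is the right way to see why the decomposition is discontinuous only across $y=a(t)$.
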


\section{Gevrey regularity estimate of $\bw_1$}

In what follows, let us always assume that $T\le\min(T_1,T_2)$.

\begin{proposition}\label{prop:w1-L}
Let $\bw_1$ be a smooth solution of (\ref{eq:w_1})  in $[0,T]$. Then it holds that for any $t\in [0,T]$,
\begin{align*}
&\f d{dt}\|(\bw_1)_\Phi\|_{H^{\f{1}{2},0}}^2+(\lambda-C)\|(\bw_1)_\Phi\|_{H^{\f{3}{4}+\th,0}}^2+\|\pa_y (\bw_1)_\Phi\|_{H^{\f{1}{2},0}}^2\\
&\quad\le C\Big(\|u_\Phi\|_{H^{\f{1}{4},1}_\mu}^2+\|(\bw_1)_\Phi\|_{H^{\f{1}{2},0}}^2+\|(\bw_2)_\Phi\|_{H^{\f{1}{2},0}}^2\Big).
\end{align*}
\end{proposition}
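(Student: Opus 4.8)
The plan is to run a weighted $H^{1/2,0}$ energy estimate on equation \eqref{eq:w_1} after conjugating by the Gevrey weight $e^{\Phi(t,\xi)}$, exploiting that $\Phi$ is decreasing in $t$. First I would apply $\langle D_x\rangle^{1/2}e^{\Phi}$ to \eqref{eq:w_1}. Writing $g=\langle D_x\rangle^{1/2}(\bw_1)_\Phi$, the conjugated equation reads, schematically,
\beno
\pa_t g+u^s\pa_x g-\pa_y^2 g+\dot\Phi(t,D_x)g=[\langle D_x\rangle^{1/2}e^{\Phi},u^s\pa_x](\bw_1)+\langle D_x\rangle^{1/2}e^{\Phi}\big([\psi_1,\pa_y^2]w_1+\psi_1\pa_yF_1\big),
\eeno
where $\dot\Phi(t,\xi)=\pa_t\Phi=-\lambda\langle\xi\rangle^{1/2+2\theta}$, so that $-\dot\Phi(t,D_x)$ is a positive Fourier multiplier comparable to $\lambda\langle D_x\rangle^{1/2+2\theta}$. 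Taking the $L^2(\R^2_+)$ inner product with $g$ and integrating by parts in $y$ (the boundary term vanishes since $\pa_y w_1|_{y=0}=0$, and $\psi_1$, hence $g$, is smooth up to $y=0$) produces
\beno
\f12\f{d}{dt}\|g\|_{L^2}^2+\lambda\|\langle D_x\rangle^{\f14+\th}g\|_{L^2}^2+\|\pa_y g\|_{L^2}^2\le |\text{commutator terms}|+|(\text{RHS},g)_{L^2}|,
\eeno
and the transport term $(u^s\pa_x g,g)_{L^2}$ vanishes because $u^s=u^s(t,y)$ is independent of $x$. Note $\|\langle D_x\rangle^{\f14+\th}g\|_{L^2}=\|(\bw_1)_\Phi\|_{H^{3/4+\th,0}}$, which is the good term we want to extract; summing over $y$-derivatives up to order $0$ in the $x$-variable this is already the full $H^{3/4+\th,0}$ norm since $\bw_1$ has no $x$-derivatives counted beyond the $\langle D_x\rangle^{1/2}$, and the $\|\pa_y g\|$ term assembles into $\|\pa_y(\bw_1)_\Phi\|_{H^{1/2,0}}$.

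Next I would estimate the right-hand side. The commutator $[\langle D_x\rangle^{1/2}e^{\Phi},u^s\pa_x](\bw_1)$ is the standard Gevrey commutator: since $u^s$ is a Fourier multiplier $1$ in $x$ (it only multiplies, it does not interact with $D_x$), actually $[\langle D_x\rangle^{1/2}e^{\Phi},u^s]\pa_x\bw_1 = 0$ because $u^s$ does not depend on $x$; so this term drops entirely — the only genuine interaction is through the nonlocal-in-time nature of $e^{\Phi}$, which I already accounted for via $\dot\Phi$. Thus the real work is the source term $\langle D_x\rangle^{1/2}e^\Phi\big([\psi_1,\pa_y^2]w_1+\psi_1\pa_yF_1\big)$. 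The commutator $[\psi_1,\pa_y^2]w_1=\psi_1''w_1+2\psi_1'\pa_yw_1$ is supported on $I_1=\mathrm{supp}\,\phi_1'$, which sits inside the monotonic domain but near the transition; there I invoke Lemma \ref{lem: linearized w12-1} to bound $\|1_{I_1}(w_1)_\Phi\|_{H^{1/2,0}}\le C\|(\bw_2)_\Phi\|_{H^{1/2,0}}$, and $\pa_y w_1$ terms I pair against $\pa_y g$ and Young them against the diffusion term $\|\pa_y g\|^2$. For the $\psi_1\pa_y F_1$ piece, I would integrate $\pa_y$ by parts onto $g$, turning it into $-(\psi_1 F_1,\pa_y g)$ plus $-(\psi_1' F_1,g)$ after distributing; recalling $F_1=u\pa_t(1/\pa_yu^s)-[\pa_y^2,1/\pa_yu^s]u$, and that on $\mathrm{supp}\,\psi_1$ the shear $\pa_y u^s$ is bounded below by \eqref{ass: linear mono} (so $1/\pa_yu^s$ and its $t,y$-derivatives are bounded, using Proposition \ref{prop:shear} for the shear-flow bounds), $F_1$ is controlled by $u$ and its first two $y$-derivatives with $L^\infty_y$ weights, i.e. $\|\langle D_x\rangle^{1/2}e^\Phi(\psi_1 F_1)\|_{L^2}\lesssim \|u_\Phi\|_{H^{1/4,2}_\mu}$-type norms. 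The one subtlety: the statement only allows $\|u_\Phi\|_{H^{1/4,1}_\mu}$ on the right, so the $\pa_y^2 u$ appearing in $[\pa_y^2,1/\pa_yu^s]u$ must be handled by pairing it against $\pa_y g$ and integrating by parts once more, or absorbed using $w_1$ (since $\pa_y^2 u$ is expressible through $\pa_y w_1$ and lower-order terms in the monotonic region), keeping only one $y$-derivative of $u$ outside. Finally, all error terms of the form $C\|\langle D_x\rangle^{1/2+2\theta-\epsilon}(\cdots)\|$ with $\epsilon>0$ get absorbed into $\lambda\|(\bw_1)_\Phi\|_{H^{3/4+\th,0}}^2$ by taking $\lambda$ large, leaving the stated constant $(\lambda-C)$; terms without the Gevrey gain land in $C(\|(\bw_1)_\Phi\|_{H^{1/2,0}}^2+\|(\bw_2)_\Phi\|_{H^{1/2,0}}^2+\|u_\Phi\|_{H^{1/4,1}_\mu}^2)$.

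The main obstacle I anticipate is bookkeeping the $y$-derivative count in $F_1$ against the diffusion term and the allowed norm $\|u_\Phi\|_{H^{1/4,1}_\mu}$: naively $[\pa_y^2,1/\pa_yu^s]u$ contains $\pa_y^2 u$, which would require $H^{1/4,2}_\mu$ control of $u$, one derivative too many. The resolution is that in the monotonic region $\pa_y^2 u = \pa_yu^s\,\pa_y w_1 + (\text{terms in }u,\pa_y u)$, so this term is really of the form (lower-order in $u$) plus $\pa_y w_1$ paired against $g$; integrating by parts moves the $\pa_y$ onto $g$, giving $(\cdots,\pa_y g)$ absorbed by $\|\pa_y g\|^2$, or it is handled exactly as the $[\psi_1,\pa_y^2]w_1$ commutator and fed into the $\|(\bw_1)_\Phi\|_{H^{1/2,0}}$ and $\|\pa_y(\bw_1)_\Phi\|_{H^{1/2,0}}$ terms. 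A secondary point of care is that the weight $\mu=e^{y/2}$ in the norms of $u$ on the right must be matched to the explicit factor $e^{-y/2}$ built into $\psi_1=e^{-y/2}\phi_1$, so that $\psi_1 u$-type quantities are genuinely estimated by $\mu$-weighted norms of $u$; this is precisely why the cut-off was defined with that exponential prefactor, and it should go through mechanically.
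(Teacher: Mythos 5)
Your overall strategy coincides with the paper's: conjugate by $e^{\Phi}$ so that $\pa_t\Phi=-\lambda\Dxi^{1/2+2\theta}$ produces the good term $\lambda\|(\bw_1)_\Phi\|_{H^{3/4+\theta,0}}^2$, kill the transport term by $x$-independence of $u^s$, integrate the diffusion by parts, and treat $[\psi_1,\pa_y^2]w_1$ via Lemma \ref{lem: linearized w12-1} together with Young's inequality against the dissipation. The only place where your proposal deviates is the source term $\psi_1\pa_yF_1$, and there are two problems there. First, a bookkeeping error: $[\pa_y^2,\f{1}{\pa_yu^s}]u=\big(\pa_y^2\f{1}{\pa_yu^s}\big)u+2\big(\pa_y\f{1}{\pa_yu^s}\big)\pa_yu$, so the commutator cancels the top-order term and $F_1$ involves only $u$ and $\pa_yu$ (the paper computes $F_1=-2\f{(\pa_y^2u^s)^2}{(\pa_yu^s)^2}u+2\f{\pa_y^2u^s}{(\pa_yu^s)^2}\pa_yu$); the second-order derivative $\pa_y^2u$ enters only through $\pa_yF_1$, which is what actually sits in the equation.

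Second, and more substantively, your primary route of integrating the outer $\pa_y$ by parts onto $g$ does not close with the stated right-hand side. After integration by parts you must estimate $\big(\psi_1(F_1)_\Phi,\pa_yg\big)_{L^2}$; to absorb $\pa_yg$ into the dissipation $\|\pa_y(\bw_1)_\Phi\|_{H^{1/2,0}}^2$ you would need $\|\psi_1(F_1)_\Phi\|_{H^{1/2,0}}^2\lesssim\|u_\Phi\|_{H^{1/2,1}_\mu}^2$ on the right, which carries $\f14$ more horizontal derivative than the allowed $\|u_\Phi\|_{H^{1/4,1}_\mu}^2$; redistributing as $\Dxi^{1/4}(F_1)_\Phi$ against $\Dxi^{3/4}\pa_y(\bw_1)_\Phi$ produces a factor controlled neither by the dissipation nor by the Gevrey gain $\|(\bw_1)_\Phi\|_{H^{3/4+\theta,0}}$. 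The paper does not integrate by parts: it bounds $\|\psi_1\pa_y(F_1)_\Phi\|_{H^{1/4,0}}$ directly, using precisely your fallback identity $\psi_1\pa_y^2u=\pa_yu^s\big(\pa_y\bw_1-\psi_1'w_1+\cdots\big)$ to convert the $\pa_y^2u$ appearing in $\pa_yF_1$ into $\pa_y\bw_1$ plus lower-order terms (the $\psi_1'w_1$ piece going to $\|(\bw_2)_\Phi\|_{H^{1/2,0}}$ by Lemma \ref{lem: linearized w12-1}), and then pairs with $\|(\bw_1)_\Phi\|_{H^{3/4,0}}$, which is absorbed for $\lambda$ large. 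So your second option is the correct one and should replace the first; with that substitution the argument is the paper's.
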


Let us  begin with the estimates of source term $F_1$.

\begin{lemma}\label{lem: linear F-S}
It holds that
\begin{align*}
&\|\psi_1(y)\pa_y(F_1)_{\Phi}\|_{H^{\f{1}{4},0}}\le
 C\big(\|u_\Phi\|_{H^{\f{1}{4},1}_\mu}+\|(\bw_1)_\Phi\|_{H^{{\f{1}{2}},1}}+\|(\bw_2)_\Phi\|_{H^{{\f{1}{2}},0}}\big).
\end{align*}
\end{lemma}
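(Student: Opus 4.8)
The plan is to write $\pa_y F_1$ as an explicit finite sum and estimate it term by term. Using the heat equation $\pa_t u^s=\pa_y^2u^s$, every time derivative of $u^s$ in $F_1=u\,\pa_t\big(\f1{\pa_y u^s}\big)-\big[\pa_y^2,\f1{\pa_y u^s}\big]u$ turns into a $y$-derivative, so one more $\pa_y$ gives a representation $\pa_y F_1=\sum_j a_j(t,y)\,\pa_y^{k_j}u$ with $k_j\in\{0,1,2\}$, where each $a_j$ is a rational expression built from $\pa_y u^s,\dots,\pa_y^4u^s$ with a power of $\pa_y u^s$ in the denominator. Since every $a_j$ depends only on $(t,y)$, both the Gevrey multiplier $(\cdot)_\Phi$ and $\D^{\f14}$ commute with multiplication by $a_j$; so it suffices to bound $\|\psi_1 a_j\,\pa_y^{k_j}u_\Phi\|_{H^{\f14,0}}$, and I would split the sum according to $k_j$.

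For the terms with $k_j\le 1$: on $\text{supp}\,\phi_1\subset[0,1-\d]\cup[1+\d,+\infty)$ the lower bound \eqref{ass: linear mono} gives $|\pa_y u^s|\gtrsim\d e^{-y}$, while Proposition \ref{prop:shear} — together with its routine extension to $\pa_y^4u^s$, uniform for $t\in[0,T]$ — gives $|\pa_y^ku^s|,\,|\pa_y^k(u^s-1)|\lesssim e^{-y}$ for the relevant $k$. Hence each such $a_j$ grows at most like $e^{y}$ on $\text{supp}\,\phi_1$, so $|\psi_1 a_j|=|e^{-y/2}\phi_1 a_j|\lesssim e^{y/2}=\mu$, and since $\D^{\f14}$ acts only in $x$ these contributions are bounded by $C\|\mu\,\D^{\f14}\pa_y^{k_j}u_\Phi\|_{L^2}\le C\|u_\Phi\|_{H^{\f14,1}_\mu}$.

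For the terms with $k_j=2$: one cannot afford $\pa_y^2 u$ against $\|u_\Phi\|_{H^{\f14,1}_\mu}$, so I would eliminate it with the good unknown. From $w_1=\pa_y u/\pa_y u^s-u\,\pa_y^2u^s/(\pa_y u^s)^2$ one gets $\pa_y^2 u=\pa_y u^s\,\pa_y w_1+b_1(t,y)\pa_y u+b_0(t,y)u$ with $b_0,b_1$ bounded on $\text{supp}\,\phi_1$, and $\psi_1\pa_y w_1=\pa_y\bw_1+\f12\bw_1-e^{-y/2}\phi_1'w_1$. The coefficient $\psi_1 a_j\pa_y u^s$ is bounded (as $a_j\lesssim e^{y}$, $\pa_y u^s\lesssim e^{-y}$), so the $\pa_y\bw_1$ and $\bw_1$ pieces are controlled by $C\|(\bw_1)_\Phi\|_{H^{\f12,1}}$ (there is room since $\D^{\f14}\le\D^{\f12}$), the $b_0u+b_1\pa_y u$ pieces by $C\|u_\Phi\|_{H^{\f14,1}_\mu}$ as before, and the remaining piece $e^{-y/2}\phi_1'w_1$, being supported in $I_1=\text{supp}\,\phi_1'$, is converted by Lemma \ref{lem: linearized w12-1} into $C\|(\bw_2)_\Phi\|_{H^{\f12,0}}$; this is the origin of the $\bw_2$-term. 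Summing the three groups yields the asserted estimate.

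The main obstacle is bookkeeping rather than a genuine difficulty: one must verify that every coefficient appearing in $\pa_y F_1$ and in the substitution for $\pa_y^2 u$, once multiplied by $\psi_1$, is bounded against the weight $\mu=e^{y/2}$ on $\text{supp}\,\phi_1$ — in particular the coefficient of $u$ coming from $\pa_y^4u^s/(\pa_y u^s)^2$, which forces the pointwise bound $|\pa_y^4u^s(t,y)|\lesssim e^{-y}$ uniformly for $t\in[0,T]$ (a routine complement to Proposition \ref{prop:shear}, via the representation formula and the bound on $E^s(t)$) — and that the only term which fails the ``bounded-against-$\mu$'' test is the cut-off error $e^{-y/2}\phi_1'w_1$, absorbed into the $\bw_2$-norm by Lemma \ref{lem: linearized w12-1}.
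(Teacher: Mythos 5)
Your proposal is correct and follows essentially the same route as the paper: compute $F_1$ explicitly (the paper finds $F_1=-2u(\pa_y^2u^s)^2/(\pa_yu^s)^3+2\pa_y^2u^s\,\pa_yu/(\pa_yu^s)^2$), use the lower bound \eqref{ass: linear mono} and the pointwise decay of $\pa_y^ku^s$ to absorb the $u$ and $\pa_yu$ terms into $\|u_\Phi\|_{H^{\f14,1}_\mu}$, and eliminate $\pa_y^2u$ via $\psi_1\pa_y^2u=\pa_yu^s\big(\pa_y\bw_1-\psi_1'w_1+\dots\big)$ with the cut-off error $\psi_1'w_1$ converted to $\|(\bw_2)_\Phi\|_{H^{\f12,0}}$ by Lemma \ref{lem: linearized w12-1}. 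The only superfluous point is your appeal to a bound on $\pa_y^4u^s$: since $\pa_ta-\pa_y^2a=-2(\pa_y^2u^s)^2/(\pa_yu^s)^3$ for $a=1/\pa_yu^s$, the fourth derivative cancels and $\pa_yF_1$ only involves $\pa_y^ku^s$ for $k\le3$, so Proposition \ref{prop:shear} suffices as stated.
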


\begin{proof}
An easy calculation gives
\begin{align*}
F_1=-2\f{(\pa_y^2 u^s)^2}{(\pa_y u^s)^2}u+2\f{\pa_y^2 u^s}{(\pa_y u^s)^2}\pa_yu.
\end{align*}
By (\ref{ass: linear mono}), we get
\begin{align*}
\|\psi_1(y)\pa_y(F_1)_{\Phi}\|_{H^{\f{13}{4},0}}
\leq C\|u_\Phi\|_{H^{\f{1}{4},1}_\mu}+C\|\psi_1(y)e^y(\pa_y^2 u)_{\Phi}\|_{H^{\f{1}{4},0}}.
\end{align*}
Notice that
\begin{align*}
\psi_1(y)\pa_y^2 u=\pa_y u^s\Big(\pa_y \bw_1-\psi_1'w_1+2\psi_1\f{\pa_y u\pa_y^2 u^s}{(\pa_y u^s)^2}+\psi_1\pa_y(\f{\pa_y^2 u^s}{(\pa_y u^s)^2})u   \Big),
\end{align*}
which along with Lemma \ref{lem:shear} implies that
\begin{align*}
\|\psi_1(y)e^y(\pa_y^2 u)_{\Phi}\|_{H^{\f{1}{4},0}}\leq
C\big(\|u_\Phi\|_{H^{\f{1}{4},1}_\mu}+\|(\bw_1)_\Phi\|_{H^{{\f{1}{2}},1}}+\|(\bw_2)_\Phi\|_{H^{{\f{1}{2}},0}}\big).
\end{align*}

Putting the above estimates together, we conclude the lemma.
\end{proof}

Now we are in position to prove proposition \ref{prop:w1-L}.
\begin{proof}
Applying $e^{\Phi(t,D_x)}$ to (\ref{eq:w_1}), we obtain
\begin{align}\label{ linearized Prandtl-tbw_1_phi}
\nonumber
\pa_t(\bw_1)_{\Phi}+\lambda\langle D_x\rangle^{\f 12+2\th}(\bw_1)_{\Phi}+&u^s\pa_x(\bw_1)_{\Phi}-\pa_y^2(\bw_1)_{\Phi}\\
&=[\psi_1(y),\pa_y^2](w_1)_{\Phi}+\psi_1(y)\pa_y(F_1)_{\Phi}.
\end{align}

Making $H^{\f{1}{2},0}$ energy estimate to  (\ref{ linearized Prandtl-tbw_1_phi}), we obtain
\begin{align*}
&\frac12\frac d{dt}\|(\bw_1)_\Phi\|_{H^{\f{1}{2},0}}^2+\lambda\|(\bw_1)_\Phi\|_{H^{\f{3}{4}+\th,0}}^2-\big(\pa_{y}^2 (\bw_1)_\Phi,(\bw_1)_\Phi\big)_{H^{\f{1}{2},0}}+\big(u^s\pa_x( \bw_1)_\Phi,(\bw_1)_\Phi\big)_{H^{\f{1}{2},0}}\\
&\quad\quad=\big([\psi_1(y),\pa_y^2](w_1)_{\Phi},(\bw_1)_\Phi\big)_{H^{\f{1}{2},0}}+\big(\psi_1(y)\pa_y(F_1)_{\Phi}, (\bw_1)_\Phi\big)_{H^{\f{1}{2},0}}.
\end{align*}

Thanks to $\pa_y (\bw_1)_\Phi|_{y=0}=0$ , we get by integration by parts that
\begin{align*}
&-(\pa_{y}^2 (\bw_1)_\Phi,(\bw_1)_\Phi)_{H^{\f{1}{2},0}}=\|\pa_y (\bw_1)_\Phi\|_{H^{\f{1}{2},0}}^2,\quad\big(u^s\pa_x( \bw_1)_\Phi,(\bw_1)_\Phi\big)_{H^{\f{1}{2},0}}=0.
\end{align*}
We infer from Lemma \ref{lem: linearized w12-1}  that
\begin{align*}
&\big([\psi_1(y),\pa_y^2](w_1)_{\Phi},(\bw_1)_\Phi\big)_{H^{\f{1}{2},0}}\\
&\leq
2\big|\big(\psi_1'(w_1)_{\Phi},\pa_y(\bw_1)_\Phi\big)_{H^{\f{1}{2},0}}\big|
+2\big|\big(\psi_1''(w_1)_{\Phi},(\bw_1)_\Phi\big)_{H^{\f{1}{2},0}}\big|\\
&\leq
C\|(\bw_2)_\Phi\|_{H^{\f{1}{2},0}}\big(\|\pa_y (\bw_1)_\Phi\|_{H^{\f{1}{2},0}}+\|(\bw_1)_\Phi\|_{H^{\f{1}{2},0}}\big)\\
&\leq
C\big(\|(\bw_2)_\Phi\|_{H^{\f{1}{2},0}}^2+\|(\bw_1)_\Phi\|_{H^{\f{1}{2},0}}^2\big)+\f{1}{8}\|\pa_y (\bw_1)_\Phi\|_{H^{\f{1}{2},0}}^2.
\end{align*}
It follows from  Lemma \ref{lem: linear F-S}  that
\begin{align*}
\big(\psi_1(y)\pa_y(F_1)_{\Phi}, (\bw_1)_\Phi\big)_{H^{\f{1}{2},0}}
\le& C\big(\|u_\Phi\|_{H^{\f{1}{4},1}_\mu}^2+\|(\bw_1)_\Phi\|_{H^{{\f{1}{2}},0}}^2+\|(\bw_2)_\Phi\|_{H^{{\f{1}{2}},0}}^2\big)\\
&+\f18\|\pa_y(\bw_1)_\Phi\|_{H^{{\f{1}{2}},0}}^2+C\|(\bw_1)_\Phi\|_{H^{{\f{3}{4}},0}}^2.
\end{align*}

Summing up all the estimates, we conclude  the proposition.
\end{proof}

\section{Gevrey regularity estimate of $\bw_2$}

First of all, we prove Gevrey regularity without weight. 

\begin{proposition}\label{prop:w2}
Let $\bw_2$ be a  solution of (\ref{eq:w2})  in $[0,T]$.
There exists $\delta>0$ small enough so that for any $t\in [0,T]$,
\begin{align*}
&\f d{dt}\|(\bw_2)_\Phi\|_{H^{\f{3}{8},0}}^2+(\lambda-C)\|(\bw_2)_\Phi\|_{H^{\f{5}{8}+\th,0}}^2+\|\pa_y (\bw_2)_\Phi\|_{H^{\f{3}{8},0}}^2\\
&\quad\le C\Big(\|u_\Phi\|_{H^{\f{1}{4},1}}^2+\|(\bw_1)_\Phi\|_{H^{\f 12,0}}^2+\|(\bw_2)_\Phi\|_{H^{\f{3}{8},0}}^2\Big).
\end{align*}
\end{proposition}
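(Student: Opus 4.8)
The plan is to mimic the structure of the proof of Proposition \ref{prop:w1-L}, working with the good unknown $\bw_2$ in place of $\bw_1$, but at the lower regularity index $\f38$ (rather than $\f12$) because the source term $F_2$ in \eqref{eq:w2} is genuinely worse: it contains the term $[\pa_y^2u^s,\pa_y^2]u$, whose top-order piece is $\sim\pa_y^2u^s\,\pa_y^2u$, and $\pa_y^2u$ cannot be controlled with the same horizontal regularity as $\pa_yu$. First I would apply $e^{\Phi(t,D_x)}$ to \eqref{eq:w2} to obtain the evolution equation for $(\bw_2)_\Phi$, with the extra good damping term $\lambda\langle D_x\rangle^{\f12+2\th}(\bw_2)_\Phi$ on the left. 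Then I would take the $H^{\f38,0}$ inner product of the equation with $(\bw_2)_\Phi$. The transport term $u^s\pa_x$ drops out (antisymmetry, exactly as in Proposition \ref{prop:w1-L}), the $-\pa_y^2$ term yields $\|\pa_y(\bw_2)_\Phi\|_{H^{\f38,0}}^2$ after integrating by parts once, using the boundary condition coming from $\psi_2(y)$ being compactly supported away from $y=0$ (so no boundary contribution), and the Gevrey term gives $\lambda\|(\bw_2)_\Phi\|_{H^{\f58+\th,0}}^2$. This leaves the commutator term $([\psi_2,\pa_y^2]w_2,(\bw_2)_\Phi)_{H^{\f38,0}}$ and the source term $(\psi_2 F_2,(\bw_2)_\Phi)_{H^{\f38,0}}$ to be estimated.

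For the commutator term, $[\psi_2,\pa_y^2]w_2 = -2\psi_2'\pa_yw_2-\psi_2''w_2$, and since $\text{supp}\,\psi_2'\subset I_2$ sits inside the monotonic region, I can invoke the second inequality of Lemma \ref{lem: linearized w12-1} to bound $\|1_{I_2}(w_2)_\Phi\|_{H^{\f12,0}}\le C\|(\bw_1)_\Phi\|_{H^{\f12,0}}$ and similarly express $\pa_yw_2$ on $I_2$ in terms of $\bw_1$ and lower-order terms; this produces a contribution bounded by $C(\|(\bw_1)_\Phi\|_{H^{\f12,0}}^2+\|(\bw_2)_\Phi\|_{H^{\f38,0}}^2)+\f18\|\pa_y(\bw_2)_\Phi\|_{H^{\f38,0}}^2$, absorbing the derivative term into the left side. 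For the source term I would first write $F_2$ explicitly; the harmless pieces $\pa_t\pa_yu^s\,\pa_yu$, $[\pa_yu^s,\pa_y^2]\pa_yu$ (which is first order in $y$-derivatives of $\pa_yu$, hence controllable by $\|u_\Phi\|_{H^{\f14,1}}$ and $\|(\bw_1)_\Phi\|$, $\|(\bw_2)_\Phi\|$ via the relations in Section 3), and $u\,\pa_t\pa_y^2u^s$ are all estimated by $C\|u_\Phi\|_{H^{\f14,1}}+\text{l.o.t.}$, using the pointwise bounds of Lemma \ref{lem:shear} and Proposition \ref{prop:shear}. The real work is the term $[\pa_y^2u^s,\pa_y^2]u = 2\pa_y^3u^s\,\pa_yu+\pa_y^4u^s\,u$: the first piece is again fine, but I must check it does not cost more than $\f14$ horizontal derivative on $\pa_yu$; here I use that $\psi_2\pa_yu$ is essentially $\pa_yu^s$ times a combination of $\bw_1$, $\bw_2$ and $u$ in the support of $\psi_2$, so it is controlled in $H^{\f12,1}$ — which after pairing against $(\bw_2)_\Phi\in H^{\f34+\th,0}$ (from the Gevrey term, with the index arithmetic $\f14+\f58+\th$ on the two factors balanced by interpolation) closes with a small constant times the dissipation plus $C\|u_\Phi\|_{H^{\f14,1}}^2+C\|(\bw_1)_\Phi\|_{H^{\f12,0}}^2+C\|(\bw_2)_\Phi\|_{H^{\f38,0}}^2$.

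The delicate point to be careful about, and the main obstacle, is the regularity bookkeeping: unlike $\bw_1$, $\bw_2$ only gains $\f58+\th$ horizontal derivatives from the Gevrey damping (not $\f34+\th$), so the pairing $(\psi_2 F_2,(\bw_2)_\Phi)_{H^{\f38,0}}$ must be arranged so that the worst factor in $F_2$ needs at most $\big(\f38\big)+\big(\f58+\th\big)-\big(\f38\big)=\f58+\th$ derivatives distributed favourably — concretely, one puts roughly $\f14$ derivative on the "$u$-type" factor (affordable since $u_\Phi\in H^{\f14,1}$) and the remaining $\f12+\th$ onto $(\bw_2)_\Phi$, which is precisely where the $H^{\f58+\th,0}$ norm with a small coefficient $\epsilon$ is spent, using Young's inequality $ab\le \epsilon a^2+C_\epsilon b^2$. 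I expect the term involving $\pa_y^2u$ implicitly (through $[\pa_y u^s,\pa_y^2]\pa_yu$ which contains $\pa_y^2u^s\pa_y^2u$-like structure after expansion) to be the one forcing the index down to $\f38$ in the first place, exactly as flagged in item 4 of the introduction. Finally, summing the transport, dissipation, Gevrey, commutator and source estimates, and choosing $\delta$ small so that the cut-off supports $\text{supp}\,\psi_2\subset[1-3\d,1+3\d]$ stay inside $[\f12,2]$ and the constants from Lemma \ref{lem:shear} are uniform, yields the stated differential inequality.
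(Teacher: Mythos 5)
Your setup (applying $e^{\Phi}$, the $H^{\f38,0}$ energy identity, the vanishing of the transport term, the dissipation term, and the treatment of the commutator $[\psi_2,\pa_y^2]w_2$ via Lemma \ref{lem: linearized w12-1}) matches the paper. But there is a genuine gap in your treatment of the only hard term. After the algebraic simplification $\pa_t\pa_yu^s\,\pa_yu+[\pa_yu^s,\pa_y^2]\pa_yu=-2\pa_y^2u^s\,\pa_y^2u$ (and $-u\pa_t\pa_y^2u^s-[\pa_y^2u^s,\pa_y^2]u=2\pa_y^3u^s\,\pa_yu$, which is indeed harmless), everything reduces to the pairing $-2\big(\psi_2\pa_y^2u^s\,\pa_y^2u_\Phi,(\bw_2)_\Phi\big)_{H^{\f38,0}}$. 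Your proposed bound rests on the claim that ``$\psi_2\pa_yu$ is essentially $\pa_yu^s$ times a combination of $\bw_1,\bw_2$ and $u$ on $\mathrm{supp}\,\psi_2$, so it is controlled in $H^{\f12,1}$.'' This fails precisely where it matters: $\pa_yu^s$ vanishes at the critical point $a(t)\in\mathrm{supp}\,\psi_2$, $w_1$ is not defined there, and controlling $\psi_2\pa_yu$ in $H^{\f12,1}$ means controlling $\psi_2\pa_y^2u$ in $H^{\f12,0}$ --- which is exactly the quantity that is \emph{not} available in the non-monotonic region (this is the obstruction flagged in item 4 of the introduction). No amount of interpolation between $H^{\f14}$ on the $u$-factor and $H^{\f58+\th}$ on $(\bw_2)_\Phi$ rescues this, because the loss is in the vertical variable, not the horizontal one.

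The paper's actual argument (Lemma \ref{lem:F2-L}, following \cite{GM}) is structurally different. One writes $u=u_1+u_2$ as in Lemma \ref{lem:u-decom}, splits the integral at $y=a(t)$ (where $\pa_yu_1$, $\pa_yu_2$ jump), and integrates by parts in $y$ on each side, producing terms $A_1,\dots,A_5$ including a surface term at $y=a(t)$ controlled by a trace/Sobolev inequality. The dangerous term $A_3$, containing $\pa_y(\bw_2)_\Phi$, is then rewritten using $\pa_yw_2=\pa_yu^s\,\pa_y^2u_1-u_1\pa_y^3u^s$, and its leading contribution is the \emph{sign-definite} quantity $A_{31}\le-\f{c^2}{4}\|\psi_2\langle D_x\rangle^{\f38}(\pa_yu_1)_\Phi\|_{L^2}^2$, thanks to the non-degeneracy $\pa_y^2u^s\ge\f c2$ on $\mathrm{supp}\,\psi_2$. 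The cross terms are bounded by $C_1\d$ times the same quantity, because $|\pa_yu^s|\le C_1\d$ on $\mathrm{supp}\,\psi_2\subset[1-3\d,1+3\d]$; this, and not uniformity of the constants in Lemma \ref{lem:shear}, is the real reason the statement requires $\d$ small. Without this sign/smallness mechanism your estimate cannot close, so the proposal as written does not prove the proposition.
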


The proposition can be proved by following the proof of Proposition \ref{prop:w1-L} and using the following lemma.

\begin{lemma}\label{lem:F2-L}
It holds that
\begin{align*}
\big(\psi_2(y)(F_2)_\Phi,(\bw_2)_\Phi\big)_{H^{\f{3}{8},0}}
\le&  C\big(\|u_\Phi\|_{H^{\f{1}{4},1}}^2+\|(\bw_1)_\Phi\|_{H^{\f 12,0}}^2+\|(\bw_2)_\Phi\|_{H^{\f{3}{8},0}}^2+\|(\bw_2)_\Phi\|_{H^{\f{5}{8},0}}^2\big)\\
&+\f12\|\pa_y(\bw_2)_\Phi\|_{H^{{\f{3}{8}},0}}.
\end{align*}
\end{lemma}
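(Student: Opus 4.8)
The plan is to estimate the source term $F_2$ expanded as
\[
F_2=\pa_t\pa_y u^s\pa_yu+[\pa_y u^s,\pa_y^2]\pa_y u-u\pa_t\pa_y^2 u^s-[\pa_y^2 u^s,\pa_y^2]u,
\]
and to observe that, after commuting, the only genuinely dangerous piece is the one containing $\pa_y^2 u$. Indeed, the commutators $[\pa_y u^s,\pa_y^2]\pa_y u$ and $[\pa_y^2 u^s,\pa_y^2]u$ produce at worst one $y$-derivative on $u$ (times shear-flow coefficients that are controlled via Proposition \ref{prop:shear} and Lemma \ref{lem:shear}), so after applying $e^{\Phi(t,D_x)}$ these terms are bounded in $H^{-\f38,0}$, say, by $C\|u_\Phi\|_{H^{\f14,1}}$ up to lower-order contributions, and pairing with $(\bw_2)_\Phi\in H^{\f38,0}$ costs nothing beyond the right-hand side of the lemma. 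The term $\pa_t\pa_y^2 u^s\,u$ is even easier since $\pa_t\pa_y^2u^s=\pa_y^4u^s$ is bounded on $[0,T]$ by Proposition \ref{prop:shear}. The term $\pa_t\pa_yu^s\,\pa_y u$ carries $\pa_y u$; writing $\pa_yu=h/d$ on the support of $\psi_2$ — wait, $\psi_2$ is supported near the critical point where $\pa_y^2u^s$ vanishes, so instead I would use the monotonic structure: near $\text{supp}\,\psi_2'$ one reconverts $\pa_y u$ in terms of $w_1,w_2$ via $w_2=\pa_yu^s\pa_yu-u\pa_y^2u^s$, giving $\pa_yu=(w_2+u\pa_y^2u^s)/\pa_yu^s$ where $\pa_yu^s$ is bounded below by \eqref{ass: linear mono}, so $\|\psi_2(y)(\pa_yu)_\Phi\|_{H^{\f14,0}}\le C(\|(\bw_2)_\Phi\|_{H^{\f14,0}}+\|u_\Phi\|_{H^{\f14,1}})$.

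The heart of the matter is therefore the piece $\psi_2(y)\,\pa_y^2u^s\,\pa_y^2u$ hidden inside $[\pa_y^2u^s,\pa_y^2]u=-\pa_y^2\pa_y^2u^s\,u-2\pa_y^3u^s\,\pa_yu$ — actually that expansion shows $[\pa_y^2u^s,\pa_y^2]u$ contains no $\pa_y^2u$ at all, only $\pa_yu$ and $u$; similarly $[\pa_yu^s,\pa_y^2]\pa_yu=-\pa_y^2\pa_yu^s\,\pa_yu-2\pa_y^2u^s\,\pa_y^2u$ does contain $\pa_y^2u$. So the one term I cannot dispose of by brute force is $-2\psi_2(y)\pa_y^2u^s\,\pa_y^2u$. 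To handle it, I would integrate by parts in $y$ inside the $H^{\f38,0}$ inner product against $(\bw_2)_\Phi=\psi_2 (w_2)_\Phi$, moving one $\pa_y$ off $\pa_y^2u$ onto $\psi_2\pa_y^2u^s(\bw_2)_\Phi$; the resulting $\pa_y u$-terms are handled as above, and the boundary term vanishes because $\psi_2$ is compactly supported. What remains is a term of the form $\big(\psi_2\pa_y^2u^s\,\pa_y u_\Phi,\ \pa_y(\bw_2)_\Phi\big)_{H^{\f38,0}}$, which by Cauchy–Schwarz is bounded by $\f12\|\pa_y(\bw_2)_\Phi\|_{H^{\f38,0}}$ times $C\|\psi_2(\pa_yu)_\Phi\|_{H^{\f38,0}}$; this last factor I again re-express via $w_2$ and $u$ using $\eqref{ass: linear mono}$, at the cost of the $\|(\bw_2)_\Phi\|_{H^{\f58,0}}$ term (losing $\f14$-derivative going from $H^{\f38}$ on $\pa_yu$ to the needed $H^{\f58}$-control of $w_2$, plus one power of $\D^{\f14}$ absorbed into the $H^{\f14,1}$ norm of $u$). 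This is precisely why the lemma's right-hand side tolerates $\|(\bw_2)_\Phi\|_{H^{\f58,0}}$ but only $\f38$-regularity is propagated in Proposition \ref{prop:w2}: the factor $\pa_y^2u^s$ near the critical point is merely bounded, with no sign or lower bound available, forcing us to pay a derivative.

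I expect the main obstacle to be the bookkeeping of which half-integer Sobolev exponent each reconstruction of $\pa_yu$ and $\pa_y^2u$ lands in, and keeping the $\pa_y(\bw_2)_\Phi$-absorption self-consistent — one must verify that every passage from $u$-type quantities to $w_1,w_2$-type quantities via Lemma \ref{lem:u-decom} and the identity $w_2=\pa_yu^s\pa_yu-u\pa_y^2u^s$ only uses the lower bound \eqref{ass: linear mono} on $\text{supp}\,\psi_2$ (valid for $\delta$ small so that $[1-3\d,1+3\d]\cap\text{supp}\,\psi_2'\subset[0,1-\d]\cup[1+\d,\infty)$, which is where the $\delta$-smallness hypothesis of Proposition \ref{prop:w2} enters). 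Once the commutators are expanded and the single term $-2\psi_2\pa_y^2u^s\,\pa_y^2u$ is integrated by parts, the rest is a routine application of the Gevrey weight bound $\|e^{\Phi(t,D_x)}\cdot\|$ together with the shear-flow estimates of Section 2, and summing the pieces gives exactly the claimed inequality.
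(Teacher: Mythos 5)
Your identification of the dangerous term is correct: after using $\pa_tu^s=\pa_y^2u^s$, the pieces $-u\pa_t\pa_y^2u^s-[\pa_y^2u^s,\pa_y^2]u=2\pa_y^3u^s\pa_yu$ are harmless, and the whole difficulty sits in $\pa_t\pa_yu^s\,\pa_yu+[\pa_yu^s,\pa_y^2]\pa_yu=-2\pa_y^2u^s\,\pa_y^2u$. Your first integration by parts, producing $\big(\psi_2\pa_y^2u^s\,\pa_yu_\Phi,\pa_y(\bw_2)_\Phi\big)_{H^{3/8,0}}$, is also the paper's term $A_3$. But the way you propose to close is where the argument breaks. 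You want to bound this by Cauchy--Schwarz and then control $\|\psi_2(\pa_yu)_\Phi\|_{H^{3/8,0}}$ by writing $\pa_yu=(w_2+u\pa_y^2u^s)/\pa_yu^s$ and invoking the lower bound \eqref{ass: linear mono}. That lower bound holds only on $[0,1-\d]\cup[1+\d,+\infty)$, whereas $\psi_2$ is supported in $[1-3\d,1+3\d]$, which contains the critical point $a(t)$ where $\pa_yu^s$ vanishes. Division by $\pa_yu^s$ is legitimate on $\mathrm{supp}\,\psi_2'$ but not on all of $\mathrm{supp}\,\psi_2$, so the quantity $\|\psi_2(\pa_yu)_\Phi\|_{H^{3/8,0}}$ is simply not controlled by anything on the right-hand side of the lemma (near the critical point $\pa_yu$ is only reachable through $h$, and only with $H^{1/4}$ horizontal regularity).

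What the paper does instead, and what is missing from your proposal, is a \emph{second} integration by parts exploiting the algebraic identity $\pa_yw_2=\pa_yu^s\pa_y^2u_1-u_1\pa_y^3u^s$ (after decomposing $u=u_1+u_2$ as in Lemma \ref{lem:u-decom}, which also forces the integration by parts to be performed separately on $\{y<a(t)\}$ and $\{y>a(t)\}$ and produces a jump term $A_5$ at $y=a(t)$ that you do not account for). This second integration by parts reveals the negative-definite term $-\int\psi_2^2(\pa_y^2u^s)^2|\langle D_x\rangle^{3/8}(\pa_yu_1)_\Phi|^2$, which by the non-degeneracy $\pa_y^2u^s\ge c/2$ on $\mathrm{supp}\,\psi_2$ is $\le-\f{c^2}4\|\psi_2\langle D_x\rangle^{3/8}(\pa_yu_1)_\Phi\|_{L^2}^2$; the remaining quadratic terms in $(\pa_yu_1)_\Phi$ all carry a factor $\pa_yu^s$, which is $O(\d)$ on $\mathrm{supp}\,\psi_2$ because $\pa_yu^s$ vanishes at the nearby critical point, and are therefore absorbed by this good term for $\d$ small. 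This sign-and-smallness mechanism (inherited from \cite{GM}) is the actual content of the lemma; without it, no Cauchy--Schwarz bookkeeping of Sobolev exponents will close the estimate.
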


\begin{proof}
Notice that
\begin{align}
-u\pa_t\pa_y^2 u^s-[\pa_y^2 u^s,\pa_y^2]u=2\pa_y^3 u^s \pa_y u,\label{eq:com1}
\end{align}
therefore,
\begin{align*}
\big(\psi_2(u\pa_t\pa_y^2 u^s+[\pa_y^2 u^s,\pa_y^2]u)_\Phi,(\bw_2)_\Phi\big)_{H^{\f{3}{8},0}}\leq C\|u_\Phi\|_{H^{\f{1}{4},1}_\mu}\|(\bw_2)_\Phi\|_{H^{\f{5}{8},0}}.
\end{align*}
Similarly, we have
\begin{align*}
\big(\psi_2(\pa_t\pa_y u^s\pa_yu+[\pa_y u^s,\pa_y^2]\pa_y u)_\Phi,(\bw_2)_\Phi\big)_{H^{\f{3}{8},0}}=-2\big(\psi_2\pa_y^2 u^s \pa_y^2 u_\Phi,(\bw_2)_\Phi\big)_{H^{\f{3}{8},0}}.
\end{align*}
The estimate of this term is very tricky. The following argument was motivated by \cite{GM}.
By Lemma \ref{lem:u-decom}, $\pa_y u$ can be written as $\pa_y u=\pa_y u_1+\pa_y u_2 $. Note that both $\pa_y u_1$ and $\pa_y u_2$ are discontinuous across $y=a(t).$ In particular, we have
\beno
\lim_{y\rightarrow a(t)-}\pa_yu_1-\lim_{y\rightarrow a(t)+}\pa_yu_1=\pa_y^2u^s\f {u(t,x,2)} {\pa_yu^s(t,2)}\tre J.
\eeno
Then by integration by parts, we get
\begin{align*}
-2\big(\psi_2(y)(\pa_y^2 u^s \pa_y^2 u)_\Phi,(\bw_2)_\Phi\big)_{H^{\f{3}{8},0}}=&-2\int_{y>a(t)}\psi_2(y)\pa_y^2 u^s\langle D_x\rangle^{\f{3}{8}}(\pa_y^2 u_1)_\Phi \langle D_x\rangle^{\f{3}{8}}(\bw_2)_\Phi dxdy\\
&-2\int_{y<a(t)}\psi_2(y)\pa_y^2 u^s\langle D_x\rangle^{\f{3}{8}}(\pa_y^2 u_1)_\Phi \langle D_x\rangle^{\f{3}{8}}(\bw_2)_\Phi dxdy\\
&-2\int_{y>a(t)}\psi_2(y)\pa_y^2 u^s\langle D_x\rangle^{\f{3}{8}}(\pa_y^2 u_2)_\Phi \langle D_x\rangle^{\f{3}{8}}(\bw_2)_\Phi dxdy\\
=&2\int_{\R^2_{+}}\psi_2'(y)\pa_y^2 u^s\langle D_x\rangle^{\f{3}{8}}(\pa_y u_1)_\Phi \langle D_x\rangle^{\f{3}{8}}(\bw_2)_\Phi dxdy\\
&+2\int_{\R^2_{+}}\psi_2(y)\pa_y^3 u^s\langle D_x\rangle^{\f{3}{8}}(\pa_y u_1)_\Phi \langle D_x\rangle^{\f{3}{8}}(\bw_2)_\Phi dxdy\\
&+2\int_{\R^2_{+}}\psi_2(y)\pa_y^2 u^s\langle D_x\rangle^{\f{3}{8}}(\pa_y u_1)_\Phi \langle D_x\rangle^{\f{3}{8}}(\pa_y\bw_2)_\Phi dxdy\\
&-2\int_{y>a(t)}\psi_2(y)\pa_y^2 u^s\langle D_x\rangle^{\f{3}{8}}(\pa_y^2 u_2)_\Phi \langle D_x\rangle^{\f{3}{8}}(\bw_2)_\Phi dxdy\\
&-2\int_{y=a(t)}\pa_y^2 u^s \langle D_x\rangle^{\f{3}{8}}J_\Phi\langle D_x\rangle^{\f{3}{8}}(\bw_2)_\Phi dx\\
\triangleq& A_1+\cdots+A_5.
\end{align*}

Note that $\psi_2'(y)$ vanishes in a neighborhood of $y=a(t)$ so that $\pa_y u_1$ behaves like $w_1$ on the support of $\psi_2'(y).$ Thus,
\begin{align*}
A_1\leq C\|(\bw_1)_\Phi\|_{H^{\f12,0}}\|(\bw_2)_\Phi\|_{H^{\f{3}{8},0}}.
\end{align*}
Thanks to $\pa_y^2u_2=\pa_y^3u^s \f{u(t,x,2)}{\pa_y u^s(t,2)}$, we obtain
\begin{align*}
A_4\leq C\|u_\Phi\|_{H^{\f{1}{4},1}}\|(\bw_2)_\Phi\|_{H^{\f{5}{8},0}}.
\end{align*}
Similarly, we have
\begin{align*}
A_2\leq C\|u_\Phi\|_{H^{\f{1}{4},1}}\|(\bw_2)_\Phi\|_{H^{\f{5}{8},0}}.
\end{align*}
For $A_5$, we get by Sobolev inequality that
\begin{align*}
A_5\leq& C\|u_\Phi\|_{H^{\f{1}{4},1}}\|\langle D_x\rangle^{\f12}(\bw_2)_\Phi\|_{L_y^\infty L_x^2}\leq C\|u_\Phi\|_{H^{\f{13}{4},1}}\|\pa_y(\bw_2)_\Phi\|_{H^{\f{3}{8},0}}^{\f 12}\|(\bw_2)_\Phi\|_{H^{\f{5}{8},0}}^{\f 12}\\
\leq& C(\|u_\Phi\|_{H^{\f{1}{4},1}_\mu}^2+\|(\bw_2)_\Phi\|_{H^{\f{5}{8},0}}^2)+\f{1}{16}\|\pa_y(\bw_2)_\Phi\|_{H^{{\f{3}{8}},0}}^2.
\end{align*}
It remains to estimate $A_3.$ One has
\begin{align*}
&w_2=\pa_y u^s\pa_y u-u\pa_y^2 u^s=\pa_y u^s\pa_y u_1-u_1\pa_y^2 u^s,
\end{align*}
which gives
\begin{align*}
\pa_y w_2=\pa_y u^s\pa_y^2 u_1-u_1\pa_y^3 u^s.
\end{align*}
Then we may write
\begin{align*}
A_3=&2\int_{\R^2_{+}}\psi_2(y)^2\pa_y^2 u^s\pa_y u^s\langle D_x\rangle^{\f{3}{8}}(\pa_y u_1)_\Phi \langle D_x\rangle^{\f{3}{8}}(\pa_y^2 u_1)_\Phi dxdy\\
&-2\int_{\R^2_{+}}\psi_2(y)^2\pa_y^2 u^s\pa_y^3 u^s\langle D_x\rangle^{\f{3}{8}}(\pa_y u_1)_\Phi \langle D_x\rangle^{\f{3}{8}}(u_1)_\Phi dxdy\\
=&-\int_{\R^2_{+}}\psi_2(y)^2(\pa_y^2 u^s)^2(\langle D_x\rangle^{\f{3}{8}}(\pa_y u_1)_\Phi)^2  dxdy\\
&-\int_{\R^2_{+}}\psi_2(y)^2\pa_y^3 u^s\pa_y u^s(\langle D_x\rangle^{\f{3}{8}}(\pa_y u_1)_\Phi)^2 dxdy\\
&-2\int_{\R^2_{+}}\psi_2'(y)\psi_2(y)\pa_y^2 u^s\pa_y u^s(\langle D_x\rangle^{\f{3}{8}}(\pa_y u_1)_\Phi)^2 dxdy\\
&-2\int_{\R^2_{+}}\psi_2(y)^2\pa_y u^s\pa_y^3 u^s\langle D_x\rangle^{\f{3}{8}}(\pa_y u_1)_\Phi \langle D_x\rangle^{\f{3}{8}}(\pa_yu_1)_\Phi dxdy\\
&+2\int_{\R^2_{+}}\psi_2(y)\pa_y^3 u^s\langle D_x\rangle^{\f{3}{8}}(\pa_y u_1)_\Phi \langle D_x\rangle^{\f{3}{8}}(\bw_2)_\Phi dxdy\\
\triangleq& A_{31}+\cdots+A_{35}.
\end{align*}
By Lemma \ref{lem:shear}, we have $\pa_y^2 u^s\geq \f c2>0$ on $\text{supp}\psi_2$. So,
\begin{align*}
A_{31}\leq -\f{c^2}{4}\|\psi_2(y)\langle D_x\rangle^{\f{3}{8}}(\pa_y u_1)_\Phi\|_{L^2}^2.
\end{align*}
On the other hand, $|\pa_y u^s|\le C_1\delta$ on $\text{supp}\psi_2$
with $C_1$ independent of $\delta$. So,
\begin{align*}
A_{32}+A_{34}\leq C_1\d\|\psi_2(y)\langle D_x\rangle^{\f{3}{8}}(\pa_y u_1)_\Phi\|_{L^2}^2,
\end{align*}
and
\begin{align*}
A_{35}\leq \d\|\psi_2(y)\langle D_x\rangle^{\f{3}{8}}(\pa_y u_1)_\Phi\|_{L^2}^2+C\|(\bw_2)_\Phi\|_{H^{\f{3}{8},0}}^2.
\end{align*}

While, on the support of $\psi_2'$, $\pa_yu_1$ behaves like $\bw_1$ and $\bw_2$. Similar to $A_1$, we have
\begin{align*}
A_{33}\leq C\|(\bw_1)_\Phi\|_{H^{\f12,0}}^2.
\end{align*}
This shows that
\begin{align*}
A_3\leq -\big(\f{c^2}{4}-C_1\d\big)\|\psi_2(y)\langle D_x\rangle^{\f{3}{8}}(\pa_y u_1)_\Phi\|_{L^2}^2+C\big(\|(\bw_1)_\Phi\|_{H^{\f12,0}}^2+\|(\bw_2)_\Phi\|_{H^{\f{3}{8},0}}^2\big)
\end{align*}

Putting the above estimates together and taking $\d$ small enough, we conclude our result.
\end{proof}

Next we prove Gevrey regularity estimate with weight.
We introduce a weight function $\varphi(t,y)=\varphi(y-a(t))$, where $\varphi(0)=0$ and $\varphi(y)=0$ when $|y-1|>2\d.$

\begin{proposition}\label{prop:w2-LW}
Let $w_2$ be a  solution of (\ref{eq:w2})  in $[0,T]$ and $\th_1>0$ be a small constant determined later. There exists $\delta>0$ small enough so that for any $t\in [0,T]$ and $\delta_2>0$,
\begin{align*}
&\f d{dt}\|(w_2)_\Phi \varphi^{\f{1+\th_1}{2}}\|_{H^{\f 12,0}}^2+(\lambda-C)\|(w_2)_\Phi \varphi^{\f{1+\th_1}{2}}\|_{H^{\f{3}{4}+\th,0}}^2+\|\pa_y (w_2)_\Phi \varphi^{\f{1+\th_1}{2}}\|_{H^{\f12,0}}^2\\
&\le C\Big(\big(\|u_\Phi\|_{H^{\f{1}{4},1}}^2+\|(\bw_1)_\Phi\|_{H^{\f 12,0}}^2+\|(\bw_2)_\Phi\|_{H^{\f{5}{8},0}}^2+\|(w_2)_\Phi \varphi^{\f{1+\th_1}{2}}\|_{H^{\f 12,0}}^2\Big)+\delta_2\|\pa_y(\bw_2)_\Phi\|_{H^{\f{3}{8},0}}^2.
\end{align*}
\end{proposition}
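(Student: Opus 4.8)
\textbf{Proof plan for Proposition \ref{prop:w2-LW}.}
The plan is to run a weighted $H^{1/2,0}$ energy estimate for $(w_2)_\Phi$ with the weight $\varphi^{\f{1+\th_1}{2}}$, mimicking the structure of the proof of Proposition \ref{prop:w1-L} but keeping careful track of the commutators generated by the time- and space-dependence of $\varphi$. Applying $e^{\Phi(t,D_x)}$ to \eqref{eq:w2} and then multiplying by $\varphi^{1+\th_1}$ before taking the $H^{\f12,0}$ inner product with $(w_2)_\Phi$, I would first extract the three ``main'' terms: the Gevrey gain $\lambda\|(w_2)_\Phi\varphi^{\f{1+\th_1}{2}}\|_{H^{\f34+\th,0}}^2$ coming from $\lambda\langle D_x\rangle^{\f12+2\th}$, the transport term $\big(u^s\pa_x(w_2)_\Phi,\varphi^{1+\th_1}(w_2)_\Phi\big)_{H^{\f12,0}}$ which vanishes since $u^s$ is independent of $x$ and $\varphi^{1+\th_1}$ is real, and the diffusion term. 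For the diffusion term $-\big(\pa_y^2(w_2)_\Phi,\varphi^{1+\th_1}(w_2)_\Phi\big)_{H^{\f12,0}}$ I would integrate by parts in $y$; since $\varphi$ vanishes near the endpoints of its support there is no boundary contribution, and I obtain $\|\pa_y(w_2)_\Phi\varphi^{\f{1+\th_1}{2}}\|_{H^{\f12,0}}^2$ plus a commutator $\tfrac12\big(\pa_y^2(\varphi^{1+\th_1})(w_2)_\Phi,(w_2)_\Phi\big)_{H^{\f12,0}}$ (after redistributing one derivative of $\varphi^{1+\th_1}$), which is controlled by $C\|(w_2)_\Phi\varphi^{\f{1+\th_1}{2}}\|_{H^{\f12,0}}^2$ once one checks that $|\pa_y^2(\varphi^{1+\th_1})|\lesssim \varphi^{\th_1-1}$ is dominated by $\varphi^{1+\th_1}$ where $\varphi$ is bounded away from zero and is integrable otherwise — this is the point where the choice $\th_1>0$ (so that $\varphi^{1+\th_1}$ degenerates strictly faster than linearly) matters. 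The time derivative hitting $\varphi(y-a(t))$ produces a term $\tfrac{1+\th_1}{2}\big(\pa_ta(t)\,\varphi'\varphi^{\th_1}(w_2)_\Phi,(w_2)_\Phi\big)_{H^{\f12,0}}$ which, using $|\pa_ta|\le C$ from Proposition \ref{prop:shear} and the same degeneracy bookkeeping, is absorbed into $C\|(w_2)_\Phi\varphi^{\f{1+\th_1}{2}}\|_{H^{\f12,0}}^2$.

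Next I would treat the two right-hand side pieces. The commutator $[\psi_2(y),\pa_y^2]w_2 = -2\psi_2'\pa_yw_2-\psi_2''w_2$, after pairing against $\varphi^{1+\th_1}(w_2)_\Phi$ in $H^{\f12,0}$: since $\psi_2'$ is supported in $I_2=\text{supp}\,\psi_2'$, which is a region where $\varphi$ stays bounded below away from zero and, crucially, where $w_2$ behaves like $\bw_1$ by Lemma \ref{lem: linearized w12-1}, this term is bounded by $C\|(\bw_1)_\Phi\|_{H^{\f12,0}}\big(\|(\bw_1)_\Phi\|_{H^{\f12,0}}+\|\pa_y(w_2)_\Phi\varphi^{\f{1+\th_1}{2}}\|_{H^{\f12,0}}\big)$ and hence, by Young, by $C\|(\bw_1)_\Phi\|_{H^{\f12,0}}^2+\tfrac18\|\pa_y(w_2)_\Phi\varphi^{\f{1+\th_1}{2}}\|_{H^{\f12,0}}^2$. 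The genuinely delicate part is the source term $\big(\varphi^{1+\th_1}\psi_2(y)(F_2)_\Phi,(w_2)_\Phi\big)_{H^{\f12,0}}$. Using \eqref{eq:com1} and the rewriting performed in the proof of Lemma \ref{lem:F2-L}, $F_2$ reduces modulo harmless $H^{\f14,1}$-controlled terms to $-2\pa_y^2u^s\pa_y^2u$, so the dangerous contribution is $-2\big(\varphi^{1+\th_1}\psi_2\pa_y^2u^s(\pa_y^2u)_\Phi,(w_2)_\Phi\big)_{H^{\f12,0}}$. I would integrate by parts in $y$ exactly as in Lemma \ref{lem:F2-L}, but now the weight $\varphi^{1+\th_1}$ vanishes at $y=a(t)$, so the jump term $A_5$ across $y=a(t)$ disappears entirely — this is the whole point of the weight. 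The surviving terms are: (i) derivatives of $\varphi^{1+\th_1}\psi_2$ landing on the cut-offs, which either live on $\text{supp}\,\psi_2'$ (hence controlled by $\|(\bw_1)_\Phi\|_{H^{\f12,0}}^2$) or produce a factor $\pa_y(\varphi^{1+\th_1})\sim\varphi'\varphi^{\th_1}$ times $(\pa_yu_1)_\Phi(w_2)_\Phi$; (ii) the analogue of $A_3$, namely $2\big(\varphi^{1+\th_1}\psi_2\pa_y^2u^s(\pa_yu_1)_\Phi,(\pa_y w_2)_\Phi\big)_{H^{\f12,0}}$, into which I substitute $\pa_yw_2=\pa_yu^s\pa_y^2u_1-u_1\pa_y^3u^s$ and then integrate by parts once more in $y$ to generate the coercive square $-\big(\varphi^{1+\th_1}\psi_2^2(\pa_y^2u^s)^2((\pa_yu_1)_\Phi)^2$... wait, I must be careful: the gain here is the term $-\|\varphi^{\f{1+\th_1}{2}}\psi_2\pa_y^2u^s\langle D_x\rangle^{\f12}(\pa_yu_1)_\Phi\|_{L^2}^2$ up to $C_1\d$ corrections from $|\pa_yu^s|\le C_1\d$ and $\pa_y(\varphi^{1+\th_1})$-type remainders, plus $\|(\bw_2)_\Phi\|_{H^{\f58,0}}^2$ and $\|(\bw_1)_\Phi\|_{H^{\f12,0}}^2$. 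Taking $\d$ small so that $C_1\d<\f{c^2}{8}$, this coercive term is nonnegative and can simply be dropped (or used to absorb the remainders).

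The one place where $\|\pa_y(\bw_2)_\Phi\|_{H^{\f38,0}}$ unavoidably enters — and why it appears on the right with a free small constant $\delta_2$ — is in the analogue of $A_5$-type Sobolev-trace estimates and in the terms where $\pa_y^2u_2=\pa_y^3u^s\,\tfrac{u(t,x,2)}{\pa_yu^s(t,2)}$ is paired against $(w_2)_\Phi$ after integrating by parts: there one picks up $\|\langle D_x\rangle^{\f12}(\bw_2)_\Phi\|_{L^\infty_yL^2_x}$, and interpolation gives $\|\langle D_x\rangle^{\f12}(\bw_2)_\Phi\|_{L^\infty_yL^2_x}\le C\|\pa_y(\bw_2)_\Phi\|_{H^{\f38,0}}^{\f12}\|(\bw_2)_\Phi\|_{H^{\f58,0}}^{\f12}$, so Young's inequality with parameter $\delta_2$ yields $\delta_2\|\pa_y(\bw_2)_\Phi\|_{H^{\f38,0}}^2+C(\delta_2)\|(\bw_2)_\Phi\|_{H^{\f58,0}}^2$; since Proposition \ref{prop:w2} already produces a bona fide $\|\pa_y(\bw_2)_\Phi\|_{H^{\f38,0}}^2$ on its left-hand side with a fixed coefficient, choosing $\delta_2$ small in the final combined estimate is harmless. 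Collecting all pieces, using $\f{1+\th_1}{2}$-degeneracy of $\varphi$ to absorb every $\varphi'\varphi^{\th_1}$ remainder into $C\|(w_2)_\Phi\varphi^{\f{1+\th_1}{2}}\|_{H^{\f12,0}}^2$ (plus the $L^2$-in-$x$-frequency low-order terms), and moving the $\tfrac18\|\pa_y(w_2)_\Phi\varphi^{\f{1+\th_1}{2}}\|_{H^{\f12,0}}^2$ contributions to the left, I obtain the stated differential inequality. \textbf{The main obstacle} I expect is the bookkeeping of the boundary/jump terms after the two successive integrations by parts in $y$ in the $A_3$-analogue: one must verify that the weight $\varphi^{1+\th_1}$ is regular enough (which needs $\th_1>0$ but also $\th_1$ small, so that $1+\th_1<2$ and $\varphi^{1+\th_1}\in C^1$ with the right vanishing rate) that no uncontrolled distributional term at $y=a(t)$ survives, while simultaneously being degenerate enough that the extra factors $\pa_y(\varphi^{1+\th_1})$ and $\pa_t(\varphi^{1+\th_1})$ are dominated by $\varphi^{1+\th_1}$ plus an integrable loss — this is precisely the trade-off that forces the Gevrey-$2-\th$ (rather than Gevrey-$\tfrac74$) regularity and is the technical heart of the improvement over \cite{GM}.
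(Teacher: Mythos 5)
Your strategy coincides with the paper's: a weighted $H^{\f12,0}$ energy estimate in which the weight, vanishing at $y=a(t)$, eliminates the jump term across the critical point, and the $A_3$-analogue is made coercive by substituting $\pa_yw_2=\pa_yu^s\pa_y^2u_1-u_1\pa_y^3u^s$; you also correctly identify the interpolation $\|\D^{\f12}(\bw_2)_\Phi\|_{L^\infty_yL^2_x}\le C\|\pa_y(\bw_2)_\Phi\|_{H^{\f38,0}}^{\f12}\|(\bw_2)_\Phi\|_{H^{\f58,0}}^{\f12}$ as the origin of the $\delta_2$-term. But two of your absorption claims fail as written. For the diffusion commutator you put both $y$-derivatives on the weight and assert that $|\pa_y^2(\varphi^{1+\th_1})|\lesssim\varphi^{\th_1-1}$ is controlled by $C\|(w_2)_\Phi\varphi^{\f{1+\th_1}{2}}\|_{H^{\f12,0}}^2$ because $\varphi^{\th_1-1}$ is integrable. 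Integrability of the coefficient does not yield the pointwise domination $\varphi^{\th_1-1}\lesssim\varphi^{1+\th_1}$ near $y=a(t)$ (that would need $\varphi^2\gtrsim 1$ there), so this term cannot be absorbed into the weighted norm; it must be handled by the $L^\infty_yL^2_x$ interpolation and lands in $C\|(\bw_2)_\Phi\|_{H^{\f58,0}}^2+\delta_2\|\pa_y(\bw_2)_\Phi\|_{H^{\f38,0}}^2$. The paper avoids the issue by integrating by parts only once, keeping the bounded factor $\varphi^{\th_1}\pa_y\varphi$ paired with $\pa_y(w_2)_\Phi\cdot(w_2)_\Phi$ and then interpolating. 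The same objection applies to your absorption of the $\pa_t\varphi$ and $\varphi'\varphi^{\th_1}$ remainders into the weighted norm: they go into the unweighted $\|(\bw_2)_\Phi\|_{H^{\f12,0}}^2\le\|(\bw_2)_\Phi\|_{H^{\f58,0}}^2$.

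More seriously, the terms you dismiss as ``a factor $\pa_y(\varphi^{1+\th_1})\sim\varphi'\varphi^{\th_1}$ times $(\pa_yu_1)_\Phi(w_2)_\Phi$'' (the paper's $B_1$ and $B_{33}$) are the technical heart of the lemma and are not dispatched by degeneracy bookkeeping. Near $y=a(t)$ one must invoke the representation of Lemma \ref{lem:u-decom}: $\pa_yu_1$ contains the singular piece $\bw_2/\pa_yu^s\sim\bw_2/|y-a(t)|$ and the piece $\pa_y^2u^s\int\bw_2/(\pa_yu^s)^2$, and only the combination of (i) the factor $\varphi^{\th_1}$ rendering $\int|y-a(t)|^{\th_1-1}dy$ finite and (ii) extracting $\|\D^{\f12}(\bw_2)_\Phi\|_{L^\infty_yL^2_x}^2$ from the $y$-integral controls them, again producing $C\|(\bw_2)_\Phi\|_{H^{\f58,0}}^2+\delta_2\|\pa_y(\bw_2)_\Phi\|_{H^{\f38,0}}^2$ rather than a weighted norm. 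Your plan omits this decomposition entirely, and your attribution of the $\delta_2$-term mainly to the $\pa_y^2u_2$ contributions is misplaced (those are bounded directly by $C\|u_\Phi\|_{H^{\f14,1}}\|(w_2)_\Phi\varphi^{\f{1+\th_1}{2}}\|_{H^{\f34,0}}$). A minor slip: $\varphi$ vanishes, rather than being bounded below, on $\text{supp}\,\psi_2'$, so the commutator $[\psi_2,\pa_y^2]w_2$ paired against $\varphi^{1+\th_1}$ is simply zero; the paper works with the uncut equation for $w_2$ and never meets it.
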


Let us  begin with the following estimate of source term.

\begin{lemma}\label{lem:weight F_2}
It holds that for any $\delta_2>0$,
\begin{align*}
\big((F_2)_\Phi,(w_2)_\Phi \varphi^{1+\th_1}\big)_{H^{\f12,0}}
\leq& C\Big(\|u_\Phi\|_{H^{\f{1}{4},1}_\mu}^2+\|(w_2)_\Phi \varphi^{\f{1+\th_1}{2}}\|_{H^{\f{3}{4},0}}^2\\
&\quad+\|(\bw_1)_\Phi\|_{H^{\f 12,0}}^2+\|(\bw_2)_\Phi\|_{H^{\f{5}{8},0}}^2\Big)+\delta_2\|\pa_y(\bw_2)_\Phi\|_{H^{\f{3}{8},0}}^2.
\end{align*}

\end{lemma}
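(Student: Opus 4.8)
The plan is to reduce the bound on $\big((F_2)_\Phi,(w_2)_\Phi\varphi^{1+\th_1}\big)_{H^{\f12,0}}$ to the same four pieces that appeared in Lemma~\ref{lem:F2-L}, but now carrying the weight $\varphi^{1+\th_1}$. Recalling $F_2=\pa_t\pa_y u^s\pa_yu+[\pa_y u^s,\pa_y^2]\pa_y u-u\pa_t\pa_y^2 u^s-[\pa_y^2 u^s,\pa_y^2]u$, I would first dispose of the terms involving only $u$ and low-order derivatives of $u$. Using the identity \eqref{eq:com1}, namely $-u\pa_t\pa_y^2u^s-[\pa_y^2u^s,\pa_y^2]u=2\pa_y^3u^s\pa_yu$, together with $\pa_t\pa_yu^s=\pa_y^3u^s$, everything except the commutator term $[\pa_yu^s,\pa_y^2]\pa_yu$ is a smooth coefficient times $\pa_yu$. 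Since the weight $\varphi^{1+\th_1}$ is bounded with bounded derivatives (it is supported in $|y-1|\le 2\d$ and vanishes at $y=a(t)$), and since $\|u_\Phi\|_{H^{\f14,1}_\mu}$ controls $\pa_yu_\Phi$, these contributions are bounded by $C\|u_\Phi\|_{H^{\f14,1}_\mu}\|(w_2)_\Phi\varphi^{\f{1+\th_1}{2}}\|_{H^{\f58,0}}$, and then by Young's inequality and the interpolation $\|(w_2)_\Phi\varphi^{\f{1+\th_1}{2}}\|_{H^{\f58,0}}\lesssim\|(w_2)_\Phi\varphi^{\f{1+\th_1}{2}}\|_{H^{\f12,0}}^{\f12}\|(w_2)_\Phi\varphi^{\f{1+\th_1}{2}}\|_{H^{\f34,0}}^{\f12}$ these fit inside the right-hand side (the $H^{\f34,0}$ piece is absorbable by the $(\lambda-C)$ term in the proposition).

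The heart of the matter is the remaining piece $\big([\pa_yu^s,\pa_y^2]\pa_yu,(w_2)_\Phi\varphi^{1+\th_1}\big)_{H^{\f12,0}}=-2\big(\pa_y^2u^s\pa_y^2u_\Phi,(w_2)_\Phi\varphi^{1+\th_1}\big)_{H^{\f12,0}}+\text{(lower order)}$, which is the weighted analogue of the term $-2(\psi_2\pa_y^2u^s\pa_y^2u_\Phi,(\bw_2)_\Phi)_{H^{\f38,0}}$ treated via $A_1,\dots,A_5$ in Lemma~\ref{lem:F2-L}. I would repeat that integration-by-parts scheme, decomposing $\pa_yu=\pa_yu_1+\pa_yu_2$ via Lemma~\ref{lem:u-decom} and integrating $\pa_y^2u_1$ by parts in $y$; the key new feature is that one now differentiates the weight $\varphi^{1+\th_1}$ as well, producing an extra term with $\pa_y(\varphi^{1+\th_1})=(1+\th_1)\varphi^{\th_1}\varphi'$, which is still bounded. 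The boundary term at $y=a(t)$ now carries the factor $\varphi^{1+\th_1}(t,a(t))=\varphi(0)^{1+\th_1}=0$, so — crucially — the analogue of $A_5$ vanishes; this is exactly where the weight is designed to help. The self-interaction term (analogue of $A_3$) again produces, after one more integration by parts, a favorable sign: a term $-\int\psi_2^2(\pa_y^2u^s)^2\varphi^{1+\th_1}(\langle D_x\rangle^{\f12}(\pa_yu_1)_\Phi)^2$ controlled below by $-\f{c^2}{4}\|\varphi^{\f{1+\th_1}{2}}\langle D_x\rangle^{\f12}(\pa_yu_1)_\Phi\|_{L^2}^2$, against which the remaining coefficient terms (each carrying a factor $\le C_1\d$ because $|\pa_yu^s|\le C_1\d$ on the support) are absorbed for $\d$ small. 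The terms where derivatives land on $\varphi^{1+\th_1}$ or $\psi_2'$ behave like $\bw_1,\bw_2$ on their supports (which sit away from $y=a(t)$ inside the monotonic region), giving $\|(\bw_1)_\Phi\|_{H^{\f12,0}}^2$ and $\|(\bw_2)_\Phi\|_{H^{\f58,0}}^2$ contributions, and the $u_2$-terms reduce to $\|u_\Phi\|_{H^{\f14,1}_\mu}$-terms as before. Finally, the term with $\pa_y(\bw_2)_\Phi$ (coming from $A_3$'s analogue, where $\pa_y u_1$ gets reconstituted into $\pa_yw_2=\pa_yu^s\pa_y^2u_1-u_1\pa_y^3u^s$) is estimated by $\delta_2\|\pa_y(\bw_2)_\Phi\|_{H^{\f38,0}}^2$ plus absorbable terms, using Young's inequality with the small parameter $\delta_2$.

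The main obstacle I anticipate is the precise bookkeeping around the point $y=a(t)$: one must check that every term in which $\pa_y^2u_1$ is integrated by parts either produces the vanishing boundary contribution (thanks to $\varphi(0)=0$), or lands its $y$-derivative on a factor ($\psi_2'$, $\varphi^{\th_1}\varphi'$, or $\pa_y^3u^s$) that is either supported away from $a(t)$ or bounded, and that the surviving second-order-derivative self-interaction genuinely has the good sign with a constant beating $C_1\d$. One also has to be careful that the weight power $\f{1+\th_1}{2}$ on the left of the proposition is consistent: differentiating $\varphi^{\f{1+\th_1}{2}}$ gives $\varphi^{\f{\th_1-1}{2}}\varphi'$, which is singular at $y=a(t)$ unless paired with a factor $\varphi$ or a derivative hitting the right thing — this is why the estimates are phrased in terms of $\|(w_2)_\Phi\varphi^{\f{1+\th_1}{2}}\|_{H^{\f34,0}}$ rather than any norm of $\pa_y(w_2)_\Phi\varphi^{\f{1+\th_1}{2}}$ on the right, and one must route all $y$-derivatives carefully so they never produce $\varphi^{\f{\th_1-1}{2}}$ standing alone.
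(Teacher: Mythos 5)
Your skeleton is the right one — reduce to the weighted analogue of the $A_1,\dots,A_5$ scheme of Lemma \ref{lem:F2-L}, observe that the boundary term at $y=a(t)$ now vanishes because $\varphi(t,a(t))=0$, and recover the good sign of the self-interaction term with the $O(\d)$ coefficient terms absorbed. But there is a genuine gap in how you treat the terms produced by differentiating the weight. You assert that the terms where the $y$-derivative lands on $\varphi^{1+\th_1}$ ``sit away from $y=a(t)$ inside the monotonic region'' and hence behave like $\bw_1,\bw_2$. This is false: $\pa_y\varphi$ is supported in $[1-2\d,1+2\d]$, which by \eqref{ass:critical} \emph{contains} the critical point $a(t)$. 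On that set the representation \eqref{eq:u1-dy} shows that $\pa_yu_1$ contains the component $\bw_2/\pa_yu^s$, which degenerates like $|y-a(t)|^{-1}$, and the term
\begin{equation*}
B_1=2(1+\th_1)\int_{\R^2_+}\varphi^{\th_1}\pa_y\varphi\,\pa_y^2u^s\,\langle D_x\rangle^{\f12}(\pa_yu_1)_\Phi\,\langle D_x\rangle^{\f12}(w_2)_\Phi\,dxdy
\end{equation*}
(and its counterpart $B_{33}$ inside the self-interaction term) cannot be bounded by $\|(\bw_1)_\Phi\|_{H^{\f12,0}}^2+\|(\bw_2)_\Phi\|_{H^{\f58,0}}^2$ by a pointwise argument. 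The paper's treatment of exactly these terms is the technical core of the lemma: one splits at $y=a(t)$, uses $\int_{1-2\d}^{a}\varphi^{\th_1}(\pa_yu^s)^{-1}dy\le C\int|y-a|^{\th_1-1}dy<\infty$ (this is the only place the exponent $\th_1>0$ is needed), and pays for the $L^\infty_y L^2_x$ control of $\langle D_x\rangle^{\f12}(\bw_2)_\Phi$ with half a $y$-derivative via $\|\langle D_x\rangle^{\f12}(\bw_2)_\Phi\|_{L^\infty_yL^2_x}^2\le \|\pa_y(\bw_2)_\Phi\|_{H^{\f38,0}}\|(\bw_2)_\Phi\|_{H^{\f58,0}}$. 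That is the actual origin of the $\delta_2\|\pa_y(\bw_2)_\Phi\|_{H^{\f38,0}}^2$ term in the statement; you correctly predict that such a term must appear but attribute it to the reconstitution of $\pa_yw_2$ in the $A_3$-analogue, which is not where it comes from. Without this singular-integral step near $a(t)$ your argument does not close.

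The rest of your outline is consistent with the paper: the $u_2$-contributions reduce to $\|u_\Phi\|_{H^{\f14,1}_\mu}$ via $\pa_y^2u_2=\pa_y^3u^s\,u(t,x,2)/\pa_yu^s(t,2)$, the lower-order pieces of $F_2$ are handled by \eqref{eq:com1}, and your closing caution about never leaving a bare $\varphi^{\f{\th_1-1}{2}}$ is exactly the right instinct — it is the quantitative version of that caution (integrability of $|y-a|^{\th_1-1}$ against the degenerate factor $1/\pa_yu^s$) that you need to supply.
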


\begin{proof}
By \eqref{eq:com1}, it is easy to show that
\begin{align*}
\big((u\pa_t\pa_y^2 u^s+[\pa_y^2 u^s,\pa_y^2]u)_\Phi,(w_2)_\Phi \varphi^{1+\th_1}\big)_{H^{\f12,0}}
\leq C\|u_\Phi\|_{H^{\f{1}{4},1}_\mu} \|(w_2)_\Phi \varphi^{\f{1+\th_1}{2}}\|_{H^{\f{3}{4},0}}.
\end{align*}
Similar to the proof of Lemma \ref{lem:F2-L}, we have
\begin{align*}
&\big((\pa_t\pa_y u^s\pa_yu+[\pa_y u^s,\pa_y^2]\pa_y u)_\Phi,(w_2)_\Phi \varphi^{1+\th_1}\big)_{H^{\f12,0}}\\
&=-2\int_{\R^2_{+}}\varphi(t,y)^{1+\th_1}\pa_y^2 u^s\langle D_x\rangle^{\f12}(\pa_y^2 u_1)_\Phi \langle D_x\rangle^{\f12}(w_2)_\Phi dxdy\\
&=-2\int_{y>a(t)}\varphi(t,y)^{1+\th_1}\pa_y^2 u^s\langle D_x\rangle^{\f12}(\pa_y^2 u_1)_\Phi \langle D_x\rangle^{\f12}(w_2)_\Phi dxdy\\
&\quad-2\int_{y<a(t)}\varphi(t,y)^{1+\th_1}\pa_y^2 u^s\langle D_x\rangle^{\f12}(\pa_y^2 u_1)_\Phi \langle D_x\rangle^{\f12}(w_2)_\Phi dxdy\\
&\quad-2\int_{y>a(t)}\varphi(t,y)^{1+\th_1}\pa_y^2 u^s\langle D_x\rangle^{\f12}(\pa_y^2 u_2)_\Phi \langle D_x\rangle^{\f12}(w_2)_\Phi dxdy\\
&=2(1+\theta_1)\int_{\R^2_{+}}\varphi^{\th_1}\pa_y \varphi\pa_y^2 u^s\langle D_x\rangle^{\f12}(\pa_y u_1)_\Phi \langle D_x\rangle^{\f12}(w_2)_\Phi dxdy\\
&\quad+2\int_{\R^2_{+}}\varphi(t,y)^{1+\th_1}\pa_y^3 u^s\langle D_x\rangle^{\f12}(\pa_y u_1)_\Phi \langle D_x\rangle^{\f12}(w_2)_\Phi dxdy\\
&\quad+2\int_{\R^2_{+}}\varphi(t,y)^{1+\th_1}\pa_y^2 u^s\langle D_x\rangle^{\f12}(\pa_y u_1)_\Phi \langle D_x\rangle^{\f12}(\pa_yw_2)_\Phi dxdy\\
&\quad-2\int_{y>a(t)}\varphi(t,y)^{1+\th_1}\pa_y^2 u^s\langle D_x\rangle^{\f12}(\pa_y^2 u_2)_\Phi \langle D_x\rangle^{\f12}(w_2)_\Phi dxdy\\&\triangleq B_1+\cdots+B_4.
\end{align*}
Here integration by parts does not give rise to the boundary term due to
$\varphi(t,a(t))=0$.

 As $\pa_y^2 u_2=\pa_y^3 u^s \f{u(t,x,2)}{\pa_y u^s(t,2)}$, we have
\begin{align*}
B_2+B_4\leq C \|u_\Phi\|_{H^{\f{1}{4},1}_\mu}\|(w_2)_\Phi \varphi^{\f{1+\th_1}{2}}\|_{H^{\f{3}{4},0}}.
\end{align*}

Thanks to $|\pa_y\varphi|\leq C$ and $\text{supp}{\pa_y \varphi}\subset[1-2\d,1+2\d]$, we get
\begin{align*}
B_1\leq& C\Big|\int_{\R}\int_{1-2\d}^{a(t)}\varphi^{\th_1}\pa_y^2 u^s\langle D_x\rangle^{\f12}(\pa_y u_1)_\Phi \langle D_x\rangle^{\f12}(w_2)_\Phi dxdy\Big|\\
&+C\Big|\int_{\R}\int_{a(t)}^{1+2\d}\varphi^{\th_1}\pa_y^2 u^s\langle D_x\rangle^{\f12}(\pa_y u_1)_\Phi \langle D_x\rangle^{\f12}(w_2)_\Phi dxdy\Big|\\
\triangleq& B_{11}+B_{12}.
\end{align*}

By Lemma \ref{lem:u-decom}, $\pa_y u_1$ can be expressed as
\begin{align}\label{eq:u1-dy}
\pa_y u_1=
\left\{
\begin{aligned}
&\pa_y^2 u^s\Big(\int_{0}^{1-2\d}\phi_1w_1dy'+\int_{1-2\d}^{y}\f{\bw_2}{(\pa_y u^s)^2}dy'\Big)+\f{\bw_2}{\pa_y u^s} \quad\text{for}\quad 1-2\d \leq y<a(t),\\
&\pa_y^2 u^s\Big(\int_{1+2\d}^y\f{\bw_2}{(\pa_y u^s)^2}dy'+\int_{2}^{1+2\d}\phi_1w_1dy'\Big)+\f{\bw_2}{\pa_y u^s}\quad\text{for}\quad a(t) < y<1+2\d.
\end{aligned}
\right.
\end{align}
Then we have
\begin{align*}
B_{11}\leq& C\Big|\int_{\R}\int_{1-2\d}^{a(t)}\varphi^{\th_1}(\pa_y^2 u^s)^2\langle D_x\rangle^{\f12}\big(\int_{0}^{1-2\d}\phi_1w_1dy'\big)_\Phi \langle D_x\rangle^{\f12}(\bw_2)_\Phi dxdy\Big|\\
&+C\Big|\int_{\R}\int_{1-2\d}^{a(t)}\varphi^{\th_1}(\pa_y^2 u^s)^2\langle D_x\rangle^{\f12}\big(\int_{1-2\d}^{y}\f{\bw_2}{(\pa_y u^s)^2}dy'\big)_\Phi \langle D_x\rangle^{\f12}(\bw_2)_\Phi dxdy\Big|\\
&+C\Big|\int_{\R}\int_{1-2\d}^{a(t)}\varphi^{\th_1}\f{\pa_y^2 u^s}{\pa_y u^s}\langle D_x\rangle^{\f12}(\bw_2)_\Phi \langle D_x\rangle^{\f12}(\bw_2)_\Phi dxdy\Big|\\
\triangleq& D_1+D_2+D_3.
\end{align*}
It is easy to get
\begin{align*}
D_1\leq C \|(\bw_1)_\Phi\|_{H^{\f12,0}} \|(\bw_2)_\Phi\|_{H^{\f{5}{8},0}}.
\end{align*}
For $y\in [1-2\d,a(t))$, $\varphi$ and $\pa_yu^s$ behaves like $|y-a(t)|$. So,
\begin{align*}
D_3\leq& C\int_{1-2\d}^{a(t)}\f{\varphi^{\th_1}}{\pa_y u^s}dy \|\langle D_x\rangle^{\f12}(\bw_2)_\Phi\|_{L_y^\infty L_x^2}^2\\
\leq& C\int_{1-2\d}^{a(t)}\f{1}{|y-a|^{1-\th_1}}dy \|\langle D_x\rangle^{\f{3}{8}}\pa_y(\bw_2)_\Phi\|_{ L^2}\|\langle D_x\rangle^{\f{5}{8}}(\bw_2)_\Phi\|_{ L^2}\\
\leq& C\|\pa_y(\bw_2)_\Phi\|_{H^{\f{3}{8},0}}\|(\bw_2)_\Phi\|_{H^{\f{5}{8},0}}.
\end{align*}
Similarly, we have
\begin{align*}
D_2\leq& C\int_{1-2\d}^{a(t)}\varphi^{\th_1}\int_{1-2\d}^{y}\f{1}{(\pa_y u^s)^2}dy'dy \|\langle D_x\rangle^{\f12}(\bw_2)_\Phi\|_{L_y^\infty L_x^2}^2\\
\leq& C\|\pa_y(\bw_2)_\Phi\|_{H^{\f{3}{8},0}}\|(\bw_2)_\Phi\|_{H^{\f{5}{8},0}}.
\end{align*}
Here we used
\begin{align*}
\int_{1-2\d}^{a(t)}\varphi^{\th_1}\int_{1-2\d}^{y}\f{1}{(\pa_y u^s)^2}dy'dy
\leq& C\int_{1-2\d}^{a(t)}\varphi^{\th_1}\int_{1-2\d}^{y}\f{1}{|y-a|^2}dy'dy\\
\leq& C\int_{1-2\d}^{a(t)}\f{1}{|y-a|^{1-\th_1}}dy\leq C.
\end{align*}
This shows that for any $\delta_2>0$,
\begin{align*}
B_{11}\leq C\big(\|(\bw_1)_\Phi\|_{H^{\f12,0}}^2+\|(\bw_2)_\Phi\|_{H^{\f{5}{8},0}}^2\big)+\delta_2\|\pa_y(\bw_2)_\Phi\|_{H^{\f{3}{8},0}}^2.
\end{align*}
The same argument shows that
\begin{align*}
B_{12}\leq C\big(\|(\bw_1)_\Phi\|_{H^{\f12,0}}^2+\|(\bw_2)_\Phi\|_{H^{\f{5}{8},0}}^2\big)+\delta_2\|\pa_y(\bw_2)_\Phi\|_{H^{\f{3}{8},0}}^2.
\end{align*}
Thus, we obtain
\begin{align*}
B_{1}\leq C\big(\|(\bw_1)_\Phi\|_{H^{\f12,0}}^2+\|(\bw_2)_\Phi\|_{H^{\f{5}{8},0}}^2\big)+2\delta_2\|\pa_y(\bw_2)_\Phi\|_{H^{\f{3}{8},0}}^2.
\end{align*}

Now we deal with $B_3.$ Similar to $A_3$ in
Lemma \ref{lem:F2-L}, we obtain
\begin{align*}
B_3=&\int_{\R^2_{+}}\varphi^{1+\th_1}(\pa_y^2 u^s)^2(\langle D_x\rangle^{\f12}(\pa_y u_1)_\Phi)^2  dxdy\\
&-\int_{\R^2_{+}}\varphi^{1+\th_1}\pa_y^3 u^s\pa_y u^s(\langle D_x\rangle^{\f12}(\pa_y u_1)_\Phi)^2 dxdy\\
&-(1+\theta_1)\int_{\R^2_{+}}\varphi^{\th_1}\pa_y \varphi\pa_y^2 u^s\pa_y u^s(\langle D_x\rangle^{\f12}(\pa_y u_1)_\Phi)^2 dxdy\\
&-2\int_{\R^2_{+}}\varphi^{1+\th_1}\pa_y u^s\pa_y^3 u^s\langle D_x\rangle^{\f12}(\pa_y u_1)_\Phi \langle D_x\rangle^{\f12}(\pa_yu_1)_\Phi dxdy\\
&+2\int_{\R^2_{+}}\varphi^{1+\th_1}\pa_y^3 u^s\langle D_x\rangle^{\f12}(\pa_y u_1)_\Phi \langle D_x\rangle^{\f12}(w_2)_\Phi dxdy\\
\triangleq& B_{31}+\cdots+B_{35}.
\end{align*}
Similar to $A_{31},A_{32},A_{34},A_{35}$ in Lemma \ref{lem:F2-L}, we have
\begin{align*}
B_{31}+B_{32}+B_{34}+B_{35}\leq -\big(\f{c^2}{4}-C_1\d\big)\|(\pa_y u_1)_\Phi \varphi^{\f{1+\th_1}{2}}\|_{H^{\f 12,0}}^2+C\|(w_2)_\Phi\varphi^{\f{1+\th_1}{2}}\|_{H^{\f 12,0}}^2.
\end{align*}

Similar to $B_{1}$, we have
\begin{align*}
B_{33}\leq& C\Big|\int_{\R}\int_{1-2\d}^{a(t)}\varphi^{\th_1}\pa_y u^s(\langle D_x\rangle^{\f12}(\pa_y u_1)_\Phi)^2  dxdy\Big|\\
&+C\Big|\int_{\R}\int_{a(t)}^{1+2\d}\varphi^{\th_1}\pa_y u^s(\langle D_x\rangle^{\f12}(\pa_y u_1)_\Phi)^2  dxdy\Big|
\triangleq E_{1}+E_{2}.
\end{align*}
We get by (\ref{eq:u1-dy})  that
\begin{align*}
E_1\leq& C\Big|\int_{\R}\int_{1-2\d}^{a(t)}\varphi^{\th_1}\pa_y u^s\big(\int_{0}^{1-2\d}\langle D_x\rangle^{\f12}(\phi_1w_1)_\Phi dy'\big)^2  dxdy\Big|\\
&+C\Big|\int_{\R}\int_{1-2\d}^{a(t)}\varphi^{\th_1}\pa_y u^s\big(\int_{1-2\d}^{y}\f{\langle D_x\rangle^{\f12}(\bw_2)_\Phi}{(\pa_y u^s)^2}dy'\big)^2 dxdy\Big|\\
&+C\Big|\int_{\R}\int_{1-2\d}^{a(t)}\varphi^{\th_1}\f{1}{\pa_y u^s}(\langle D_x\rangle^{\f12}(\bw_2)_\Phi)^2dxdy\Big|\\
\triangleq& E_{11}+E_{12}+E_{13}.
\end{align*}
It is easy to see that
\begin{align*}
E_{11}\leq C\|(\bw_1)_\Phi\|_{H^{\f 12,0}}^2,
\end{align*}
and
\begin{align*}
E_{13} \leq& C\int_{1-2\d}^{a}\f{\varphi^{\th_1}}{\pa_y u^s}dy \|\langle D_x\rangle^{\f12}(\bw_2)_\Phi\|_{L_y^\infty L_x^2}^2\leq C\|\pa_y(\bw_2)_\Phi\|_{H^{\f{3}{8},0}}\|(\bw_2)_\Phi\|_{H^{\f{5}{8},0}}.
\end{align*}
Similar to $E_{13}$, we have
\begin{align*}
E_{12}\leq& C\|\pa_y(\bw_2)_\Phi\|_{H^{\f{3}{8},0}}\|(\bw_2)_\Phi\|_{H^{\f{5}{8},0}}.
\end{align*}
This shows that for any $\delta_1>0$,
\begin{align*}
E_{1}\leq C\big(\|(\bw_1)_\Phi\|_{H^{\f 12,0}}^2+\|(\bw_2)_\Phi\|_{H^{\f{5}{8},0}}^2\big)+\delta_2\|\pa_y(\bw_2)_\Phi\|_{H^{\f{3}{8},0}}^2.
\end{align*}
Similarly, we have
\begin{align*}
E_{2}\leq C\big(\|(\bw_1)_\Phi\|_{H^{\f 12,0}}^2+\|(\bw_2)_\Phi\|_{H^{\f{5}{8},0}}^2\big)+\delta_2\|\pa_y(\bw_2)_\Phi\|_{H^{\f{3}{8},0}}^2.
\end{align*}
Thus, we get
\begin{align*}
B_{33}\leq C\big(\|(\bw_1)_\Phi\|_{H^{\f 12,0}}^2+\|(\bw_2)_\Phi\|_{H^{\f{5}{8},0}}^2\big)+2\delta_2\|\pa_y(\bw_2)_\Phi\|_{H^{\f{3}{8},0}}^2.
\end{align*}
Summing up, we obtain
\begin{align*}
B_3\leq& -\big(\f{c^2}{4}-C_1\d\big)\|(\pa_y u_1)_\Phi \varphi^{\f{1+\th_1}{2}}\|_{H^{\f 12,0}}^2+C\big(\|(w_2)_\Phi\varphi^{\f{1+\th_1}{2}}\|_{H^{\f 12,0}}^2+\|(\bw_1)_\Phi\|_{H^{\f 12,0}}^2\\
&+\|(\bw_2)_\Phi\|_{H^{\f{5}{8},0}}^2\big)+2\delta_2\|\pa_y(\bw_2)_\Phi\|_{H^{\f{3}{8},0}}^2.
\end{align*}

Summing up the estimates of $B_1,B_2, B_3$ and
taking $\delta$ small enough, we conclude the lemma.
\end{proof}

\smallskip

Now we are in position to prove Proposition \ref{prop:w2-LW}.

\begin{proof}
Recall that $w_2$ satisfies
\begin{align}
&\pa_tw_2+u^s\pa_xw_2-\pa_y^2w_2=F_2.\label{eq: linearized w2}
\end{align}
Applying $e^{\Phi(t,D_x)}$ to (\ref{eq: linearized w2}), we get
\begin{align}
\pa_t (w_2)_\Phi+\la\langle D_x \rangle ^{\f12+2\th} (w_2)_\Phi+u^s\pa_x(w_2)_\Phi-\pa_y^2(w_2)_\Phi=(F_2)_\Phi.\label{eq: linearized w2 phi}
\end{align}
Taking $\langle D_x \rangle ^{\f72}$ on both sides of (\ref{eq: linearized w2 phi}) and taking $L^2$ inner product with $\langle D_x \rangle ^{\f12}(w_2)_\Phi \varphi^{1+\th_1}$, we obatin
\begin{align*}
\f12\f{d}{dt}&\|(w_2)_\Phi \varphi^{\f{1+\th_1}{2}}\|_{H^{\f 12,0}}^2
+\la\| (w_2)_\Phi\varphi^{\f{1+\th_1}{2}}\|_{H^{\f {3}{4}+\th,0}}^2
+\int_{\R^2_{+}}u^s\langle D_x \rangle ^{\f12}\pa_x(w_2)_\Phi\langle D_x \rangle ^{\f12}(w_2)_\Phi\varphi^{1+\th_1}dxdy\\
&\qquad-\int_{\R^2_{+}}\langle D_x \rangle ^{\f12}\pa_y^2(w_2)_\Phi\langle D_x \rangle ^{\f12}(w_2)_\Phi\varphi^{1+\th_1}dxdy\\
\leq&\int_{\R^2_{+}}\varphi^{\th_1}\pa_t \varphi|\langle D_x \rangle ^{\f12}(w_2)_\Phi|^2dxdy+\big((F_2)_\Phi,(w_2)_\Phi \varphi^{1+\th_1}\big)_{H^{\f12,0}}.
\end{align*}
We get by integration by parts that
\begin{align*}
\int_{\R^2_{+}}u^s\langle D_x \rangle ^{\f12}\pa_x(w_2)_\Phi\langle D_x \rangle ^{\f12}(w_2)_\Phi\varphi^{1+\th_1}dxdy=0,
\end{align*}
and
\begin{align*}
&-\int_{\R^2_{+}}\langle D_x \rangle ^{\f12}\pa_y^2(w_2)_\Phi\langle D_x \rangle ^{\f12}(w_2)_\Phi\varphi^{1+\th_1}dxdy\\
&= \|\pa_y(w_2)_\Phi \varphi^{\f{1+\th_1}{2}}\|_{H^{\f 12,0}}^2
+(1+\th_1)\int_{\R^2_{+}}\varphi^{\th_1}\pa_y\varphi \langle D_x \rangle ^{\f12}\pa_y(w_2)_\Phi\langle D_x \rangle ^{\f12}(w_2)_\Phi dxdy\\
&\geq \|\pa_y(w_2)_\Phi \varphi^{\f{1+\th_1}{2}}\|_{H^{\f 12,0}}^2-C\|\pa_y(\bw_2)_\Phi \|_{H^{\f{3}{8},0}}\|(\bw_2)_\Phi \|_{H^{\f{5}{8},0}}\\
&\geq \|\pa_y(w_2)_\Phi \varphi^{\f{1+\th_1}{2}}\|_{H^{\f 12,0}}^2-C\|(\bw_2)_\Phi \|_{H^{\f{5}{8},0}}^2-\delta_2\|\pa_y(\bw_2)_\Phi \|_{H^{\f{3}{8},0}}^2.
\end{align*}
Here we used $|\pa_y \varphi|\leq C$ and $\psi_2(y)=1$ for $y\in  \text{supp}\pa_y \varphi$. Similarly, we have
\begin{align*}
\int_{\R^2_{+}}\varphi^{\th_1}\pa_t \varphi|\langle D_x \rangle ^{\f12}(w_2)_\Phi|^2dxdy\leq C\|(\bw_2)_\Phi\|_{H^{\f 12,0}}^2.
\end{align*}

With the above estimates and using Lemma \ref{lem:weight F_2},  we deduce our result.
\end{proof}

\section{Gevrey regularity estimate of $h$}

\begin{proposition}\label{prop:linearized h}
Let $h$ be a  solution of (\ref{eq:h}) in $[0,T]$. Then it holds that
\begin{align*}
&\f d{dt}\|h_{\Phi}\|_{L^2}^2+\lambda\|h_{\Phi}\|_{H^{\f{1}{4}+\th,0}}^2+\|\pa_yh_{\Phi}\|_{L^{2}}^2\\
&\le C\Big(\|h_{\Phi}\|_{L^2}^2+\|u_{\Phi}\|_{H^{\f{1}{4},1}}^2+\|(\bw_1)_\Phi\|_{H^{\f{3}{4},0}}^2+\|(\bw_2)_\Phi\|_{H^{\f{5}{8},0}}^2+\|(w_2)_\Phi\varphi^{\f{1+\th_1}{2}}\|_{H^{\f{3}{4},0}}^2\Big).
\end{align*}
\end{proposition}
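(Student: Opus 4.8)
The plan is to run an $L^2$ energy estimate on the equation \eqref{eq:h} after applying the Gevrey multiplier $e^{\Phi(t,D_x)}$. Writing the equation for $h_\Phi$ as
\begin{align*}
\pa_th_\Phi+\lambda\langle D_x\rangle^{\f12+2\th}h_\Phi+u^s\pa_xh_\Phi-\pa_y^2h_\Phi+d(v\pa_y^2u^s)_\Phi=\big((\pa_td-\pa_y^2d)\pa_yu-2\pa_yd\pa_y^2u\big)_\Phi,
\end{align*}
I would take the $L^2$ inner product with $h_\Phi$. The transport term $\big(u^s\pa_xh_\Phi,h_\Phi\big)_{L^2}$ vanishes after integration by parts in $x$; the diffusion term produces $\|\pa_yh_\Phi\|_{L^2}^2$ since $d$ vanishes near $y=0$ so there is no boundary contribution; and the multiplier produces the good term $\lambda\|h_\Phi\|_{H^{\f14+\th,0}}^2$. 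So the whole estimate reduces to bounding three source contributions: the right-hand side terms $(\pa_td-\pa_y^2d)\pa_yu$ and $\pa_yd\pa_y^2u$ (after pairing with $h_\Phi$), and the genuinely dangerous term $\big(d(v\pa_y^2u^s)_\Phi,h_\Phi\big)_{L^2}$.

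For the right-hand side terms, note $d=\phi_3(y)(\pa_y^2u^s)^{-1/2}$ is smooth and compactly supported in $[\f12,2]$ with $\pa_y^2u^s\ge c/2$ there by Lemma~\ref{lem:shear}, so $\pa_td-\pa_y^2d$ and $\pa_yd$ are bounded; pairing $(\pa_td-\pa_y^2d)\pa_yu$ with $h_\Phi$ costs only $\|u_\Phi\|_{H^{\f14,1}}^2+\|h_\Phi\|_{L^2}^2$. The term involving $\pa_y^2u$ is trickier because $\pa_y^2u$ is not directly controlled; but on $\mathrm{supp}\,\phi_3$ one can express $\pa_y^2u$ in terms of $\pa_yw_2$ and lower-order quantities via $w_2=\pa_yu^s\pa_yu-u\pa_y^2u^s$, hence $\pa_yw_2=\pa_yu^s\pa_y^2u-u\pa_y^3u^s$; since $\pa_yu^s$ does not vanish on the part of $\mathrm{supp}\,\phi_3$ away from $y=a(t)$, and near $y=a(t)$ one uses the weighted quantity $(w_2)_\Phi\varphi^{\f{1+\th_1}{2}}$, this contributes the norms $\|(\bw_2)_\Phi\|_{H^{\f58,0}}^2$ and $\|(w_2)_\Phi\varphi^{\f{1+\th_1}{2}}\|_{H^{\f34,0}}^2$ appearing on the right-hand side, after absorbing a small multiple of $\|\pa_yh_\Phi\|_{L^2}^2$.

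The main obstacle is the term $\big(d(v\pa_y^2u^s)_\Phi,h_\Phi\big)_{L^2}$. Without the cutoff $\phi_3$ the exact cancellation \eqref{stru} would kill it, but the localization destroys that structure, so I expect to reconstruct the cancellation modulo commutator errors. Writing $v=-\int_0^y\pa_xu\,dy'$ and using the $x$-antisymmetry together with the algebraic identity $d(v\pa_y^2u^s)=v\phi_3(\pa_y^2u^s)^{1/2}$ and $h=d\pa_yu$, one integrates by parts in $y$ to trade $\pa_y^2u^s$ against $\pa_yv=-\pa_xu$ and against $\pa_yh$; the $\pa_xu$ piece recombines with $h_\Phi$ to reproduce, up to the cutoff derivative $\phi_3'$ supported in the non-monotonic region, exactly the trouble term $\big(\phi_3\phi_3'v,u\big)_{L^2}$-type expression of \eqref{trou}. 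This in turn is reduced, as indicated in the introduction, to pairings $(w_i,\pa_xu)_{L^2}$, $i=1,2$, which on $\mathrm{supp}\,\phi_3'$ are estimated by $\|(\bw_1)_\Phi\|_{H^{\f34,0}}$, $\|(\bw_2)_\Phi\|_{H^{\f58,0}}$ and the weighted $w_2$-norm via Lemma~\ref{lem: linearized w12-1} and Lemma~\ref{lem:u-decom}; the loss of half a horizontal derivative here is precisely what the $\pa_y h_\Phi$ gain absorbs after a Cauchy–Schwarz/Young split, and the terms that cannot be absorbed are exactly those kept on the right-hand side of the proposition. Summing all contributions and choosing the absorption constants small yields the stated inequality.
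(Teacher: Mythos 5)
Your overall skeleton matches the paper's: an $L^2$ energy estimate on the $h_\Phi$ equation, with the transport and diffusion terms handled as you say, and with the localized analogue of \eqref{stru} producing the trouble term $2\big(\phi_3'(y)v_\Phi,\phi_3(y)u_\Phi\big)_{L^2}$ exactly as in Lemma \ref{lem: 1h}. However, there are two genuine problems. First, your treatment of $-2\big((\pa_yd\,\pa_y^2u)_\Phi,h_\Phi\big)_{L^2}$ is both unnecessary and would fail. The paper simply integrates by parts in $y$ (no boundary term since $d$ is supported away from $y=0$), giving $2\big((\pa_y^2d\,\pa_yu)_\Phi,h_\Phi\big)+2\big((\pa_yd\,\pa_yu)_\Phi,\pa_yh_\Phi\big)$, which costs only $\|u_\Phi\|_{H^{\f14,1}}^2+\|h_\Phi\|_{L^2}^2$ plus an absorbable fraction of $\|\pa_yh_\Phi\|_{L^2}^2$. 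Your route of solving $\pa_y^2u=(\pa_yw_2+u\pa_y^3u^s)/\pa_yu^s$ is singular at the critical point $y=a(t)$, which lies inside $\mathrm{supp}\,\phi_3$: since $\pa_yu^s\sim|y-a(t)|$ there and the weighted control is only on $\varphi^{\f{1+\th_1}{2}}\pa_y(w_2)_\Phi$ with $\varphi\sim|y-a(t)|$, division by $\pa_yu^s$ leaves a factor $|y-a(t)|^{\f{\th_1-1}{2}}$ that is not square integrable. Indeed the paper emphasizes (item 4 of the introduction, and the last section) that $\phi_3\pa_y^2u_\Phi$ \emph{cannot} be controlled from the available unknowns without introducing $h_1$; this is precisely why the integration by parts is the right move. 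Relatedly, the norms $\|(\bw_2)_\Phi\|_{H^{\f58,0}}$ and $\|(w_2)_\Phi\varphi^{\f{1+\th_1}{2}}\|_{H^{\f34,0}}$ on the right-hand side do not come from this term at all; they come from the trouble term.

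Second, the estimate of $\big(\phi_3'v_\Phi,\phi_3u_\Phi\big)_{L^2}$ is the entire content of the proposition (Lemma \ref{lem:2h}), and your proposal defers it to ``as indicated in the introduction.'' The actual argument requires: writing $v_\Phi=-\int_0^y\pa_xu_\Phi\,dy'$, splitting $\mathrm{supp}\,\phi_3'=E_1\cup E_2$, reconstructing $u$ and $\int_0^y\pa_xu_\Phi\,dy'$ across the critical point via Lemma \ref{lem:u-decom}, exploiting the cancellation $K_{23}+K_{25}=0$, and---crucially---the interpolation $\|\D^{\f34-\th}(\bw_2)_\Phi\|_{L_x^2}\le\|\D^{\f58}(\bw_2)_\Phi\|_{L_x^2}^{1-\al}\|\D^{\f34}(w_2)_\Phi\|_{L_x^2}^{\al}$ with $\al=1-8\th$, paired with the weight $\varphi^{\f{\al(1+\th_1)}{2}}$ so that $\int_{1-2\d}^{a(t)}|y'-a(t)|^{-\f12-\f{\al(1+\th_1)}{2}}dy'$ converges. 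This is where the Gevrey index $2-\th$ enters and it is absent from your proposal. Moreover, your claim that ``the loss of half a horizontal derivative \ldots is precisely what the $\pa_yh_\Phi$ gain absorbs'' is incorrect: a vertical derivative cannot absorb a horizontal one, and indeed no $\|\pa_yh_\Phi\|^2$ appears in Lemma \ref{lem:2h}; the horizontal loss is distributed between the two factors ($\pa_xu_\Phi$ measured in $H^{\f14}$ against $\bw_1$, $\bw_2$, $w_2\varphi^{\f{1+\th_1}{2}}$ measured in $H^{\f34}$, $H^{\f58}$, $H^{\f34}$ respectively), and is ultimately paid for by the Gevrey dissipation terms in the other propositions.
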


The proposition follows from the following two lemmas.

\begin{lemma}\label{lem: 1h}
Let $h$ be a smooth solution of (\ref{eq:h}) in $[0,T]$. Then it holds that
\begin{align*}
&\f d{dt}\|h_{\Phi}\|_{L^{2}}^2+\lambda\|h_{\Phi}\|_{H^{\f{1}{4}+\th,0}}^2+\|\pa_yh_{\Phi}\|_{L^{2}}^2\\
&\le C\big(\|h_{\Phi}\|_{L^{2}}^2+\|u_{\Phi}\|_{H^{\f{1}{4},1}}^2\big)+2\big(\phi_3'(y)v_\Phi,\phi_3(y)u_\Phi \big)_{L^2}.
\end{align*}
\end{lemma}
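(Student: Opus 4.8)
The plan is to carry out a direct $L^2$ energy estimate on \eqref{eq:h} after conjugating by the Gevrey weight $e^{\Phi(t,D_x)}$. Since $\Phi(t,D_x)$ is a Fourier multiplier in $x$ while the coefficients $u^s,d,\pa_y^2u^s$ depend only on $(t,y)$, it commutes with all of them and with $\pa_x,\pa_y$, so no commutator term arises; using $\pa_t\Phi=-\lambda\langle D_x\rangle^{\f12+2\th}$ one obtains
\begin{align*}
\pa_th_{\Phi}+\lambda\langle D_x\rangle^{\f12+2\th}h_{\Phi}+u^s\pa_xh_{\Phi}-\pa_y^2h_{\Phi}+d\,\pa_y^2u^s\,v_{\Phi}=(\pa_td-\pa_y^2d)\pa_yu_{\Phi}-2\pa_yd\,\pa_y^2u_{\Phi}.
\end{align*}
Taking the $L^2(\R^2_+)$ inner product with $h_{\Phi}$: the transport term gives $(u^s\pa_xh_{\Phi},h_{\Phi})_{L^2}=0$ by integration by parts in $x$; the Gevrey term gives $\lambda\|h_{\Phi}\|_{H^{\f14+\th,0}}^2$; and since $d=\phi_3(y)(\pa_y^2u^s)^{-1/2}$ is supported in $[\f12,2]$, the function $h$ (hence $h_{\Phi}$) is compactly supported in $y$, so no boundary term occurs and $-(\pa_y^2h_{\Phi},h_{\Phi})_{L^2}=\|\pa_yh_{\Phi}\|_{L^2}^2$.

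The key step is the term $(d\,\pa_y^2u^s\,v_{\Phi},h_{\Phi})_{L^2}$. Writing $h_{\Phi}=d\,\pa_yu_{\Phi}$ and using the algebraic identity $d^2\pa_y^2u^s=\phi_3(y)^2$, this equals $(\phi_3^2v_{\Phi},\pa_yu_{\Phi})_{L^2}$. Integrating by parts in $y$ (no boundary term), then using the incompressibility relation $\pa_yv_{\Phi}=-\pa_xu_{\Phi}$ together with $(\phi_3^2\pa_xu_{\Phi},u_{\Phi})_{L^2}=0$, one finds
\begin{align*}
\big(d\,\pa_y^2u^s\,v_{\Phi},h_{\Phi}\big)_{L^2}=-2\big(\phi_3(y)\phi_3'(y)v_{\Phi},u_{\Phi}\big)_{L^2},
\end{align*}
which is precisely the residual of the cancellation \eqref{stru} created by the $y$-localization; carried to the right-hand side of the energy identity it becomes the term $2\big(\phi_3'(y)v_{\Phi},\phi_3(y)u_{\Phi}\big)_{L^2}$ of the statement.

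It remains to dispose of the two source terms, which are lower order. In $\pa_td-\pa_y^2d$ the top-order contributions $\pa_y^4u^s=\pa_t\pa_y^2u^s$ coming from $\pa_td$ and from $\pa_y^2d$ cancel, so $\pa_td-\pa_y^2d$ involves $u^s$ only up to third order and $|\pa_td-\pa_y^2d|\le C$ on $\text{supp}\,\phi_3$ by Proposition \ref{prop:shear} and Lemma \ref{lem:shear}; hence its contribution is bounded by $C(\|h_{\Phi}\|_{L^2}^2+\|u_{\Phi}\|_{H^{\f14,1}}^2)$. For $-2(\pa_yd\,\pa_y^2u_{\Phi},h_{\Phi})_{L^2}$ I would use $2d\,\pa_yd=\pa_y(d^2)$ and $\pa_y^2u_{\Phi}=\pa_y(\pa_yu_{\Phi})$ and integrate by parts twice in $y$, reducing it to $\int_{\R^2_+}\big((\pa_yd)^2+d\,\pa_y^2d\big)(\pa_yu_{\Phi})^2\,dxdy$, a zeroth-order quantity in $\pa_yu_{\Phi}$ bounded by $C\|u_{\Phi}\|_{H^{\f14,1}}^2$ via the shear-flow bounds of Proposition \ref{prop:shear} (the factor $d\,\pa_y^2d$ brings in $\pa_y^4u^s$, whose size on $[\f12,2]$ is controlled by $\|\pa_yu^s\|_{H^4_{y,\mu}}$, and if needed one trades this against a Sobolev inequality in $y$ for the $(\pa_yu_{\Phi})^2$ factor). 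Summing up all contributions yields the asserted differential inequality.

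\emph{The main obstacle} is the term $d(v\pa_y^2u^s)$: one has to recognize that it is \emph{not} a derivative-losing term, because the identity $d^2\pa_y^2u^s=\phi_3^2$ together with two integrations by parts collapses its pairing with $h_{\Phi}$ to the manifestly lower-order quantity $(\phi_3'v_{\Phi},\phi_3u_{\Phi})_{L^2}$; and one must accept that this residual — absent from the un-localized cancellation \eqref{stru} but forced by the cutoff $\phi_3$ — cannot be closed at this stage and has to be carried forward, to be controlled later through $\bw_1,\bw_2$ and the Gevrey regularity as indicated in the introduction. Everything else reduces to the coefficient estimates of Proposition \ref{prop:shear} and Lemma \ref{lem:shear}.
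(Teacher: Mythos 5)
Your proposal is correct and follows essentially the same route as the paper: conjugate by $e^{\Phi}$, perform the $L^2$ energy estimate, use $d^2\pa_y^2u^s=\phi_3^2$ together with integration by parts and $\pa_xu+\pa_yv=0$ to collapse the dangerous term $d(v\pa_y^2u^s)$ into $2(\phi_3'v_\Phi,\phi_3u_\Phi)_{L^2}$, and exploit the cancellation of $\pa_y^4u^s$ in $\pa_td-\pa_y^2d$. The only (immaterial) deviation is your treatment of $-2(\pa_yd\,\pa_y^2u_\Phi,h_\Phi)$ via $\pa_y\big((\pa_yu_\Phi)^2\big)$, where the paper instead integrates by parts once onto $h_\Phi$ and absorbs a small multiple of $\|\pa_yh_\Phi\|_{L^2}^2$; both variants rely on the same coefficient bounds for the shear flow.
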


\begin{proof}
Applying $e^{\Phi(t,D_x)}$ on (\ref{eq:h}) and making $H^{3,0}$ energy estimate, we obtain
\begin{align*}
\f d{dt}&\|h_{\Phi}\|_{L^{2}}^2+\lambda\|h_{\Phi}\|_{H^{\f{1}{4}+\th,0}}^2-\big( (\pa_y^2h)_\Phi,h_\Phi \big)_{L^{2}}\\
\leq&-\big(u^s\pa_x h_\Phi,h_\Phi \big)_{L^{2}}+\big((\pa_t d-\pa_y^2d)(\pa_y u)_\Phi,h_\Phi \big)_{L^{2}}-2\big((\pa_yd\pa_y^2 u)_\Phi,h_\Phi \big)_{L^{2}}\\
&-\big(d(v\pa_y^2 u^s)_\Phi,h_\Phi \big)_{L^{2}}.
\end{align*}
Thanks to $h_\Phi|_{y=0}=0,$ we get by integration by parts that
\begin{align*}
&\big(u^s\pa_x h_\Phi,h_\Phi \big)_{L^{2}}=0,\quad -\big( (\pa_y^2h)_\Phi,h_\Phi \big)_{L^{2}}=\|(\pa_yh)_{\Phi}\|_{L^{2}}^2,
\end{align*}
and
\begin{align*}
-2\big((\pa_yd\pa_y^2 u)_\Phi,h_\Phi \big)_{L^{2}}=&2\big((\pa_y^2d\pa_y u)_\Phi,h_\Phi \big)_{L^2}+2\big((\pa_yd\pa_y u)_\Phi,\pa_yh_\Phi \big)_{L^{2}}\\
\leq&
C\big(\|u_{\Phi}\|_{H^{\f{1}{4},1}}^2+\|h_{\Phi}\|_{L^{2}}^2\big)+\f{1}{16}\|(\pa_yh)_{\Phi}\|_{L^{2}}^2.
\end{align*}
After some calculations, we have
\begin{align*}
\pa_t d-\pa_y^2d=\f{\pa_y^3 u^s\phi_3'}{(\pa_y^2 u^s)^{\f 32}}-\f{\phi_3''}{(\pa_y^2 u^s)^{\f 12}}-\f 34\f{(\pa_y^3 u^s)^2\phi_3}{(\pa_y^2 u^s)^{\f 52}},
\end{align*}
which gives
\begin{align*}
\big((\pa_t d-\pa_y^2d)(\pa_y u)_\Phi,h_\Phi \big)_{L^{2}}\leq&
C\|u_{\Phi}\|_{H^{\f{1}{4},1}}\|h_{\Phi}\|_{L^{2}}.
\end{align*}
Using $\pa_xu+\pa_yv=0$, we get by integration by parts that
\begin{align*}
-\big(d(v\pa_y^2 u^s)_\Phi,h_\Phi \big)_{L^{2}}=&-\big(\phi_3(y)(\pa_y^2u^s)^{-1/2}\pa_y^2 u^s v_\Phi,\phi_3(y)(\pa_y^2u^s)^{-1/2}(\pa_y u)_\Phi \big)_{L^{2}}\\
=&-\big(\phi_3(y) v_\Phi,\phi_3(y)(\pa_y u)_\Phi \big)_{L^{2}}\\
=&2\big(\phi_3'(y)v_\Phi,\phi_3(y)u_\Phi \big)_{L^2}+\big(\phi_3(y)(\pa_yv)_\Phi,\phi_3(y)u_\Phi \big)_{L^2}\\
=&2\big(\phi_3'(y)v_\Phi,\phi_3(y)u_\Phi \big)_{L^2}.
\end{align*}
This completes the proof of the lemma.
\end{proof}

The following lemma is devoted to the most trouble term $\big(\phi_3'(y) v_\Phi,\phi_3(y)u_\Phi \big)_{L^2}$. The argument is motivated by \cite{GM}.

\begin{lemma}\label{lem:2h}
It holds that
\begin{align*}
\big(\phi_3'(y)v_\Phi,\phi_3(y)u_\Phi \big)_{L^2}\leq C\Big(\|u_\Phi\|_{H^{\f{1}{4}+\th,1}}^2+\|(\bw_1)_\Phi\|_{H^{\f{3}{4},0}}^2+\|(\bw_2)_\Phi\|_{H^{\f{5}{8},0}}^2+\|(w_2)_\Phi\varphi^{\f{1+\th_1}{2}}\|_{H^{\f{3}{4},0}}^2\Big).
\end{align*}

\end{lemma}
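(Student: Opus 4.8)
The plan is to express the trouble term $\bigl(\phi_3'(y)v_\Phi,\phi_3(y)u_\Phi\bigr)_{L^2}$ in terms of the good unknowns $\bw_1,\bw_2$ and $(w_2)_\Phi\varphi^{(1+\th_1)/2}$, exploiting the fact that $\phi_3'(y)$ is supported in the non-monotonic region near the critical point, but \emph{not} at the critical point itself (recall $\phi_3\equiv 1$ on $[\tfrac34,\tfrac74]$, so $\mathrm{supp}\,\phi_3'\subset[\tfrac12,\tfrac34]\cup[\tfrac74,2]$). First I would use the divergence-free condition $v=-\int_0^y\pa_xu\,dy'$ to write $\phi_3'(y)v_\Phi=-\phi_3'(y)\int_0^y\pa_x u_\Phi\,dy'$, so that the whole pairing becomes a double integral in $y$ against $\pa_x u_\Phi$ and $u_\Phi$. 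The key structural point is that on $\mathrm{supp}\,\phi_3'$ one stays a fixed positive distance $\gtrsim\delta$ away from $a(t)$, so $\pa_yu^s$ is bounded below there; this lets me convert derivatives of $u$ into $w_1$, and $u$ itself into $\int\phi_1 w_1$ or $\int \bw_2/(\pa_yu^s)^2$ via the decomposition $u=u_1+u_2$ of Lemma~\ref{lem:u-decom}.

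The main work is bookkeeping: substituting the piecewise formula for $u$ from Lemma~\ref{lem:u-decom} into both factors. Near the endpoints of $\mathrm{supp}\,\phi_3'$ (i.e.\ $y\approx\tfrac12$ or $y\approx 2$), $u$ is governed by $\pa_yu^s\int_0^y\phi_1 w_1$ (or the analogous expression at the upper end), which is harmless and contributes $\|(\bw_1)_\Phi\|_{H^{3/4,0}}^2$ after moving half a horizontal derivative onto each factor and using $v=-\int\pa_x u$. For the intermediate range where the integral defining $u_1$ picks up $\int \bw_2/(\pa_yu^s)^2\,dy'$, I would pass the $\langle D_x\rangle^{1/2}$-type weights inside and estimate $\int_{a(t)}^{y}|\bw_2|/(\pa_yu^s)^2\,dy'$. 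Since $\pa_yu^s$ vanishes \emph{linearly} at $a(t)$ (by the non-degeneracy $\pa_y^2u^s\ge c/2$ of Lemma~\ref{lem:shear}), the weight $(\pa_yu^s)^{-2}$ behaves like $|y-a(t)|^{-2}$, which is not integrable; this is exactly where the cut-off weight $\varphi$ enters. Writing $\bw_2=\psi_2 w_2$ and inserting $\varphi^{(1+\th_1)/2}\varphi^{-(1+\th_1)/2}$, with $\varphi(t,y)\sim|y-a(t)|$, one gets $\int |y-a|^{-2+ (1+\th_1)/2}\,dy'$ or similar after a Cauchy--Schwarz in $y'$, and the borderline integrability is saved precisely by the weighted regularity $\|(w_2)_\Phi\varphi^{(1+\th_1)/2}\|_{H^{3/4,0}}$ furnished by Proposition~\ref{prop:w2-LW}, together with $\|(\bw_2)_\Phi\|_{H^{5/8,0}}$ for the part of the integral bounded away from $a(t)$. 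One must also account for $u_2=\pa_yu^s\,\tfrac{u(t,x,2)}{\pa_yu^s(t,2)}\,1_{\{y>a\}}$, whose contribution is controlled directly by trace/Sobolev estimates by $\|u_\Phi\|_{H^{1/4,1}}$.

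The hard part will be the gymnastics near $y=a(t)$: one has to split the $y$-integral into a piece close to the critical point (handled with the $\varphi$-weighted norm and the convergence of $\int|y-a|^{-1+\th_1}\,dy$) and a piece away from it (handled with the ordinary $\bw_2$ estimate), and then carefully track how many horizontal derivatives have been spent, making sure the output only involves $H^{3/4,0}$ of $\bw_1$, $H^{5/8,0}$ of $\bw_2$, $H^{3/4,0}$ of the weighted $w_2$, and $H^{1/4+\th,1}$ of $u$ — never a norm of $\bw_2$ higher than $H^{5/8,0}$, since that is all Proposition~\ref{prop:w2} provides. The Gevrey gain hidden in $\Phi$ is used through the standard trick $\|f_\Phi\|_{H^{s,0}}\lesssim\|f_\Phi\|_{H^{s',0}}$ for $s<s'$ only up to a loss that is absorbed by the $\langle D_x\rangle^{1/2+2\th}$ term, but here it suffices to stay within integer-plus-$\th$ exponents; no extra Gevrey loss beyond what Proposition~\ref{prop:w2-LW} already paid should be incurred in this particular lemma. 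Summing the pieces and using $\|u_\Phi\|_{H^{1/4,1}}\le\|u_\Phi\|_{H^{1/4+\th,1}}$ yields the stated bound.
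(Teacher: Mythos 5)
Your skeleton matches the paper's: split $\mathrm{supp}\,\phi_3'$ into its two components, write $v_\Phi=-\int_0^y\pa_xu_\Phi\,dy'$, substitute the decomposition of Lemma \ref{lem:u-decom}, and use the weight $\varphi$ to tame the singularity of $(\pa_yu^s)^{-2}$ at $y=a(t)$. But the quantitative core of the lemma is missing, and as written your estimate near $a(t)$ does not close. If you insert the full weight $\varphi^{(1+\th_1)/2}$ and apply Cauchy--Schwarz in $y''$ to $\int_{1-2\d}^{y'}\f{|\D^{3/4}(w_2)_\Phi|}{(\pa_yu^s)^2}dy''$, the $y''$-integral you must control behaves like $\big(\int|y''-a|^{-4-(1+\th_1)}dy''\big)^{1/2}\sim|y'-a|^{-2-\th_1/2}$, and even after multiplying by the outer factor $\pa_yu^s\sim|y'-a|$ you are left with $\int|y'-a|^{-1-\th_1/2}dy'=+\infty$. (Your own stated exponent $-2+(1+\th_1)/2=-\f32+\f{\th_1}{2}$ is likewise non-integrable, so the claim that ``borderline integrability is saved'' is not substantiated.) The paper's fix is an interpolation you never invoke: it bounds $\|\D^{\f34-\th}(\bw_2)_\Phi\|_{L^2_x}\le\|\D^{\f58}(\bw_2)_\Phi\|_{L^2_x}^{1-\al}\,\|\D^{\f34}(w_2)_\Phi\|_{L^2_x}^{\al}$ with $\al=1-8\th$, so that only the fraction $\varphi^{\al(1+\th_1)/2}$ of the weight is spent; the resulting integral $\int|y'-a|^{-\f12-\f{\al(1+\th_1)}{2}}dy'$ converges precisely because $\al(1+\th_1)<1$, which forces $\th>0$. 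This is exactly where the Gevrey index $2-\th$ rather than $2$ is paid, contrary to your remark that no extra Gevrey loss is incurred in this lemma; the exponent $\f34-\th$ (not $\f34$) appears on the $\bw_2$ factor because $\D^{\f14+\th}$ must be placed on the trace $u_\Phi(t,x,2)$ to land in the allowed norm $\|u_\Phi\|_{H^{\f14+\th,1}}$.

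A second, smaller gap: you claim the contribution of $u_2=\pa_yu^s\,u(t,x,2)/\pa_yu^s(t,2)\,1_{\{y>a(t)\}}$ is controlled directly by trace estimates through $\|u_\Phi\|_{H^{\f14,1}}$. The self-interaction of $u_2$ between the two factors produces terms of the form $c(t)\int_\R\pa_xu_\Phi(t,x,2)\,u_\Phi(t,x,2)\,dx$ (the terms $K_{23},K_{25}$ in the paper), which carry a full $x$-derivative on the trace and are \emph{not} bounded by $\|u_\Phi\|_{H^{\f14+\th,1}}^2$; they are admissible only because they vanish identically, $\int\pa_xg_\Phi\,g_\Phi\,dx=0$, a cancellation you need to record explicitly. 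Finally, your proposed ``half a derivative on each factor'' for the $\bw_1$ pieces would produce $\|u_\Phi\|_{H^{\f12,0}}$, which is also not among the allowed norms; the split must be $\f34$ on $\bw_1$ and $\f14$ on $u$, as in the paper's bound $J_1\le C\|(\bw_1)_\Phi\|_{H^{\f34,0}}\|u_\Phi\|_{H^{\f14,0}}$.
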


\begin{proof}
Let $\text{supp}\phi_3'=E_1\cup E_2,$ where $E_1=[\f 12,\f 34]$ and $E_2=[\f74, 2]$. Then we write
\begin{align*}
\Big|\int_{\R_{+}^2}\phi_3\phi_3'v_\Phi u_\Phi dxdy\Big|
\leq&
\Big|\int_{\R}\int_{E_1}\phi_3\phi_3'\int_{0}^{y}\pa_xu_\Phi dy'u_\Phi dxdy\Big|\\
&+
\Big|\int_{\R}\int_{E_2}\phi_3\phi_3'\int_{0}^{y}\pa_xu_\Phi dy'u_\Phi dxdy\Big|\\
\triangleq& J_1+J_2.
\end{align*}

In $E_1$,  $u$ can be expressed as $u=\pa_y u^s\int_{0}^{y}\bw_1dy'$ so that
\begin{align*}
J_1\leq& \Big|\int_{\R}\int_{E_1}\phi_3\phi_3'\int_{0}^{y}\pa_xu_\Phi dy'~\pa_y u^s\int_{0}^{y}(\bw_1)_\Phi dy' dxdy\Big|\\
\leq&C\|(\bw_1)_\Phi\|_{H^{\f{3}{4},0}}\|u_\Phi\|_{H^{\f{1}{4},0}}
\end{align*}
In $E_2$, $u$ can be expressed as $u=\pa_y u^s\int_{2}^{y}\bw_1dy'+\pa_y u^s\f{u(t,x,2)}{\pa_y u^s(t,2)}$ so that
\begin{align*}
J_2\leq&\Big|\int_{\R}\int_{E_2}\phi_3\phi_3'\int_{0}^{y}\pa_xu_\Phi dy'\big(\pa_y u^s\int_{2}^{y}\bw_1dy'\big)_\Phi dxdy\Big|\\
&+\Big|\int_{\R}\int_{E_2}\phi_3\phi_3'\int_{0}^{y}\pa_xu_\Phi dy'\pa_y u^s\f{(u(t,x,2))_\Phi}{\pa_y u^s(t,2)} dxdy\Big|\\
\triangleq& J_{21}+J_{22}.
\end{align*}
Similar to $J_1$, we have
\begin{align*}
J_{21}
\leq&C\|(\bw_1)_\Phi\|_{H^{\f{3}{4},0}}\|u_\Phi\|_{H^{\f{1}{4},0}}.
\end{align*}
Recall that $a(t)$ is a critical point of $u^s$. We decompose $\int_{0}^{y}\pa_xu_\Phi dy'$ into the following three parts
\begin{align*}
\int_{0}^{y}\pa_xu_\Phi dy'=\int_{0}^{a(t)}\pa_xu_\Phi dy'+\int_{a(t)}^{\f 74}\pa_xu_\Phi dy'+\int_{\f 74}^{y}\pa_xu_\Phi dy'.
\end{align*}
Then we have
\begin{align*}
J_{22}\leq&
\Big|\int_{\R}\int_{E_2}\phi_3\phi_3'\int_{0}^{a(t)}\pa_xu_\Phi dy'~\pa_y u^s\f{(u(t,x,2))_\Phi}{\pa_y u^s(t,2)} dxdy\Big|\\
&+\Big|\int_{\R}\int_{E_2}\phi_3\phi_3'\int_{a(t)}^{\f74}\pa_xu_\Phi dy'\pa_y u^s\f{(u(t,x,2))_\Phi}{\pa_y u^s(t,2)} dxdy\Big|\\
&+\Big|\int_{\R}\int_{E_2}\phi_3\phi_3'\int_{\f74}^{y}\pa_xu_\Phi dy'\pa_y u^s\f{(u(t,x,2))_\Phi}{\pa_y u^s(t,2)} dxdy\Big|\\
\triangleq& K_{1}+K_{2}+K_{3.}
\end{align*}
By Lemma \ref{lem:u-decom}, we get
\begin{align*}
K_1\leq&
\Big|\int_{\R}\int_{E_2}\phi_3\phi_3'\int_{0}^{1-2\d}\pa_xu_\Phi dy'~\pa_y u^s\f{(u(t,x,2))_\Phi}{\pa_y u^s(t,2)} dxdy\Big|\\
&+\Big|\int_{\R}\int_{E_2}\phi_3\phi_3'\int_{1-2\d}^{a(t)}\pa_x\big(\pa_y u^s\int_{0}^{1-2\d}\phi_1w_1dy''\big)_\Phi dy'~\pa_y u^s\f{(u(t,x,2))_\Phi}{\pa_y u^s(t,2)} dxdy\Big|\\
&+\Big|\int_{\R}\int_{E_2}\phi_3\phi_3'\int_{1-2\d}^{a(t)}\pa_x(\pa_y u^s\int_{1-2\d}^{y'}\f{\bw_2}{(\pa_y u^s)^2}dy'')_\Phi dy'~\pa_y u^s\f{(u(t,x,2))_\Phi}{\pa_y u^s(t,2)} dxdy\Big|\\
\triangleq& K_{11}+K_{12}+K_{13}.
\end{align*}
As in $J_1,$ we have
\begin{align*}
K_{11}+K_{12}\leq C\|(\bw_1)_\Phi\|_{H^{\f{3}{4},0}}\|u_\Phi\|_{H^{\f{1}{4},1}}.
\end{align*}
For $K_{13}$, let us first estimate
\begin{align*}
\Big\|&\int_{1-2\d}^{a(t)}\D^{\f{3}{4}-\th}(\pa_y u^s\int_{1-2\d}^{y'}\f{\bw_2}{(\pa_y u^s)^2}dy'')_\Phi dy'\Big\|_{L_x^2}\\\leq&
\int_{1-2\d}^{a(t)}\pa_y u^s\int_{1-2\d}^{y'}\f{\|\D^{\f{3}{4}-\th}(\bw_2)_\Phi\|_{L_x^2}}{(\pa_y u^s)^2}dy''dy'\\
\leq&
\int_{1-2\d}^{a(t)}\pa_y u^s\int_{1-2\d}^{y'}\f{\|\D^{\f{5}{8}}(\bw_2)_\Phi\|^{1-\al}_{L_x^2}\|\D^{\f{3}{4}}(w_2)_\Phi\|^{\al}_{L_x^2}\varphi^{\f{\al(1+\th_1)}{2}}}{(\pa_y u^s)^2\varphi^{\f{\al(1+\th_1)}{2}}} dy''dy'\\
\leq&
\int_{1-2\d}^{a}\pa_y u^s\Big(\int_{1-2\d}^{y'}\f{1}{(\pa_y u^s)^4\varphi^{\al(1+\th_1)}} dy''\Big)^{\f12}dy'\|\D^{\f{5}{8}}(\bw_2)_\Phi\|^{1-\al}_{L^2}\|\D^{\f{3}{4}}(w_2)_\Phi\varphi^{\f{1+\th_1}{2}}\|_{L^2}^\al \\
\leq&
\int_{1-2\d}^{a}|y'-a(t)|^{-\f12-\f{\al(1+\th_1)}{2}}dy'   \|\D^{\f{5}{8}}(\bw_2)_\Phi\|^{1-\al}_{L^2}\|\D^{\f{3}{4}}(w_2)_\Phi\varphi^{\f{1+\th_1}{2}}\|_{L^2}^\al\\
\leq&
C\|\D^{\f{5}{8}}(\bw_2)_\Phi\|^{1-\al}_{L^2}\|\D^{\f{3}{4}}(w_2)_\Phi\varphi^{\f{1+\th_1}{2}}\|_{L^2}^\al.
\end{align*}
Here $\al=1-8\th$ and take $\f{\al(1+\th_1)}{2}<\f12$ to ensure that $\int_{1-2\d}^{a}|y'-a(t)|^{-\f12-\f{\al(1+\th_1)}{2}}dy'\leq C.$ As a result, we obtain
\begin{align*}
K_{13}\leq C\big(\|u_\Phi\|_{H^{\f{1}{4},1}}^2+\|(\bw_2)_\Phi\|_{H^{\f{5}{8},0}}^2+\|(w_2)_\Phi\varphi^{\f{1+\th_1}{2}}\|_{H^{\f{3}{4},0}}^2\big).
\end{align*}
Then we have
\begin{align*}
K_{1}\leq C\big(\|u_\Phi\|_{H^{\f{1}{4},1}}^2+\|(\bw_1)_\Phi\|_{H^{\f{3}{4},0}}^2+\|(\bw_2)_\Phi\|_{H^{\f{5}{8},0}}^2+\|(w_2)_\Phi\varphi^{\f{1+\th_1}{2}}\|_{H^{\f{3}{4},0}}^2\big).
\end{align*}
For $K_2$, by Lemma \ref{lem:u-decom}, we  have
\begin{align*}
K_2\leq&\Big|\int_{\R}\int_{E_2}\phi_3\phi_3'\int_{a(t)}^{1+2\d}\pa_x\big(\pa_y u^s\int_{1+2\d}^{y'}\f{\bw_2}{(\pa_y u^s)^2}dy''\big)_\Phi dy'\pa_y u^s\f{(u(t,x,2))_\Phi}{\pa_y u^s(t,2)} dxdy\Big|\\
&+\Big|\int_{\R}\int_{E_2}\phi_3\phi_3'\int_{a(t)}^{1+2\d}\pa_x\big(\pa_y u^s\int_{2}^{1+2\d}\bw_1dy''\big)_\Phi dy'\pa_y u^s\f{(u(t,x,2))_\Phi}{\pa_y u^s(t,2)} dxdy\Big|\\
&+\Big|\int_{\R}\int_{E_2}\phi_3\phi_3'\int_{a(t)}^{1+2\d}\pa_y u^s\f{(\pa_xu(t,x,2))_\Phi}{\pa_y u^s(t,2)} dy'\pa_y u^s\f{(u(t,x,2))_\Phi}{\pa_y u^s(t,2)} dxdy\Big|\\
&+\Big|\int_{\R}\int_{E_2}\phi_3\phi_3'\int_{1+2\d}^{\f74}\pa_x\big(\pa_y u^s\int_{2}^{y'}\bw_1dy'')_\Phi dy'\pa_y u^s\f{(u(t,x,2)\big)_\Phi}{\pa_y u^s(t,2)} dxdy\Big|\\
&+\Big|\int_{\R}\int_{E_2}\phi_3\phi_3'\int_{1+2\d}^{\f74}\pa_y u^s\f{(\pa_xu(t,x,2))_\Phi}{\pa_y u^s(t,2)} dy'~\pa_y u^s\f{(u(t,x,2))_\Phi}{\pa_y u^s(t,2)} dxdy\Big|\\
\triangleq&K_{21}+\cdots+K_{25}.
\end{align*}
Similar to $K_{13}$, we have
\begin{align*}
K_{21}\leq C\big(\|u_\Phi\|_{H^{\f{1}{4},1}}^2+\|(\bw_2)_\Phi\|_{H^{\f{5}{8},0}}^2+\|(w_2)_\Phi\varphi^{\f{1+\th_1}{2}}\|_{H^{\f{3}{4},0}}^2\big).
\end{align*}
Similar to $K_{12}$, we have
\begin{align*}
K_{22}+K_{24}\leq C\|(\bw_1)_\Phi\|_{H^{\f{3}{4},0}}\|u_\Phi\|_{H^{\f{1}{4},1}}.
\end{align*}
Integration by parts, we have
\begin{align*}
K_{23}+K_{25}=0.
\end{align*}
This shows that
\begin{align*}
K_{2}\leq C\big(\|u_\Phi\|_{H^{\f{1}{4},1}}^2+\|(\bw_1)_\Phi\|_{H^{\f{3}{4},0}}^2+\|(\bw_2)_\Phi\|_{H^{\f{5}{8},0}}^2+\|(w_2)_\Phi\varphi^{\f{1+\th_1}{2}}\|_{H^{\f{3}{4},0}}^2\big).
\end{align*}
Similar to $K_{24}$ and $K_{25}$, we have
\begin{align*}
K_{3}\leq C\|(\bw_1)_\Phi\|_{H^{\f{3}{4},0}}\|u_\Phi\|_{H^{\f{1}{4},1}}.
\end{align*}
Summing up the estimates of $K_1, K_2, K_3$, we deduce that
\begin{align*}
J_{22}\leq C\big(\|u_\Phi\|_{H^{\f{1}{4},1}}^2+\|(\bw_1)_\Phi\|_{H^{\f{3}{4},0}}^2+\|(\bw_2)_\Phi\|_{H^{\f{5}{8},0}}^2+\|(w_2)_\Phi\varphi^{\f{1+\th_1}{2}}\|_{H^{\f{3}{4},0}}^2\big).
\end{align*}

Putting the above estimates together, we conclude the lemma.
\end{proof}

\section{Proof of Theorem \ref{linearized main thm}}

Let us first recover the regularity of $u$ from $w_1$ and $h$.

\begin{lemma}\label{lem:relation linearized uwh}
It holds that
\begin{align*}
\|u_\Phi\|_{H^{\f{1}{4}+\th,1}_\mu}\leq&
C\big(\|(\bw_1)_\Phi\|_{H^{\f{1}{2},0}}+\|h_\Phi\|_{H^{\f{1}{4}+\th,0}}\big).\end{align*}
\end{lemma}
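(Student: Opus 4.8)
The statement asks to bound $\|u_\Phi\|_{H^{\frac14+\th,1}_\mu}$ by the good-unknown norms $\|(\bw_1)_\Phi\|_{H^{\frac12,0}}$ and $\|h_\Phi\|_{H^{\frac14+\th,0}}$. Recall $H^{\frac14+\th,1}_\mu$ controls $\mu u$ and $\mu\pa_y u$ (each with $\frac14+\th$ horizontal derivatives), with $\mu = e^{y/2}$. The plan is to split the $y$-domain into the monotonic region, where $\pa_y u^s$ is bounded below by $c\d e^{-y}$ by \eqref{ass: linear mono}, and a neighborhood of the critical point, where instead $\pa_y^2u^s \ge c/2$ by \eqref{ass:non}, and to express $u$ and $\pa_y u$ through $\bw_1$ in the former and through $h$ in the latter, patching with the cutoffs $\phi_1,\phi_3$ (and using $\phi_1+(\text{something supported near }a(t))\ge$ a positive constant, or more simply that $\{\phi_1=1\}$ and $\{\phi_3=1\}$ together cover $\R_+$ after shrinking $\d$).

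First I would handle the monotonic part. On $\mathrm{supp}\,\phi_1$ one has $\bw_1 = \psi_1(y)\pa_y(u/\pa_y u^s)$ with $\psi_1 = e^{-y/2}\phi_1$, so integrating in $y$ from the boundary ($u|_{y=0}=0$) gives, on $\{\phi_1 = 1\}$,
\begin{align*}
u(t,x,y) = \pa_y u^s(t,y)\int_0^y e^{y'/2}(\bw_1)(t,x,y')\,dy',
\end{align*}
exactly as in Lemma \ref{lem:u-decom} for $y<1-2\d$. Since $|\pa_y u^s|\le C$ and the weight $\mu = e^{y/2}$ is controlled against the decay of $\pa_y u^s$ (using Lemma \ref{lem:shear} lower bound and Proposition \ref{prop:shear} upper bound $|\pa_y u^s|\le Ce^{-y}$), a Minkowski/Young inequality in $y$ after applying $\langle D_x\rangle^{\frac14+\th}$ yields $\|\mu\,\phi_1 u_\Phi\|_{H^{\frac14+\th,0}}\le C\|(\bw_1)_\Phi\|_{H^{\frac12,0}}$ (the $\frac12$ versus $\frac14+\th$ gap, with $\th\le\frac12$, absorbs the $y$-integration harmlessly — in fact only $\frac14+\th$ horizontal derivatives on $\bw_1$ are needed). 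For $\pa_y u$ on $\{\phi_1=1\}$ one writes $\pa_y u = \pa_y^2 u^s\int_0^y e^{y'/2}\bw_1\,dy' + \pa_y u^s\, e^{y/2}\bw_1$, and each term is estimated the same way, using $|\pa_y^2 u^s|\le Ce^{-y}$ from Proposition \ref{prop:shear}. This gives the $\bw_1$-controlled bound away from $a(t)$.

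Next, near the critical point, on $\{\phi_3 = 1\}$ one has $h = (\pa_y^2 u^s)^{-1/2}\pa_y u$ with $\pa_y^2 u^s\ge c/2$, hence $\pa_y u = (\pa_y^2 u^s)^{1/2} h$ and directly $\|\mu\,\phi_3(\pa_y u)_\Phi\|_{H^{\frac14+\th,0}}\le C\|h_\Phi\|_{H^{\frac14+\th,0}}$ since $\mu$ is bounded on the compact support $[\frac34,\frac74]$. To recover $u$ itself near $a(t)$, integrate $\pa_y u$ from $y = \frac34$ (where $u$ is already controlled by the monotonic estimate, since $\phi_1 = 1$ there if $\d$ is small): $u(t,x,y) = u(t,x,\frac34) + \int_{3/4}^y (\pa_y^2 u^s)^{1/2} h\,dy'$, and bound by Minkowski. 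Finally I would patch: choosing $\d$ small so that $\{\phi_1=1\}\cup\{\phi_3=1\}=\R_+$, write $u = \phi_1 u + (1-\phi_1)u$ and $\pa_y u$ likewise, and on $\mathrm{supp}(1-\phi_1)\subset[1-2\d,1+2\d]$ use $\phi_3=1$; collecting the four estimates gives the claimed bound.

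The main obstacle is the weighted $L^2_\mu$ estimate for $u$ (not $\pa_y u$) in the monotonic far-field $y\to+\infty$: there $\mu = e^{y/2}$ grows while $\pa_y u^s\sim Ce^{-y}$ decays and $\bw_1$ carries a factor $e^{-y/2}$, so one must check the iterated integral $\mu(y)\pa_y u^s(y)\int_0^y e^{y'/2}|\bw_1(y')|\,dy'$ is in $L^2_y$ uniformly; this works because $\mu\,\pa_y u^s\le Ce^{-y/2}$ decays and the inner integral is $L^\infty_y$-bounded by $\|e^{y/2}\bw_1\|_{L^2_y}\lesssim\|\bw_1\|_{L^2}$ only if $e^{y/2}\bw_1\in L^1_y$ — which again follows since $e^{y/2}\bw_1 = \phi_1 \pa_y(u/\pa_y u^s)$ and, using the equivalence $w_2 = (\pa_y u^s)^2 w_1$ together with the finite support weighting, one reduces to norms already on the right-hand side. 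I would keep this bookkeeping minimal and cite Proposition \ref{prop:shear} and Lemma \ref{lem:shear} for all the pointwise shear-flow bounds.
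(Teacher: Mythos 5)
Your approach is essentially the paper's: decompose the $y$-axis into the monotonic regions and a neighbourhood of the critical point, reconstruct $u$ and $\pa_yu$ from $\bw_1$ there and from $h$ near $a(t)$, and anchor the integrations at points already controlled (the paper's proof does exactly this, in two steps: first $\pa_yu$ via $\pa_yu=\pa_yu^s\big(w_1-\pa_y(\f{1}{\pa_yu^s})u\big)$ on $[0,\f34]\cup[\f74,\infty)$ and $\pa_yu=(\pa_y^2u^s)^{\f12}h$ on $[\f34,\f74]$, then $u$ itself by integrating from $0$, from $\f34$, and from $\f74$ respectively).

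The one place your write-up goes wrong is the far-field representation of $u$. The formula $u=\pa_yu^s\int_0^y e^{y'/2}\bw_1\,dy'$ is valid only for $y<1-2\d$; for $y\ge \f74$ the fundamental theorem of calculus for $\pa_y(u/\pa_yu^s)$ cannot be run across the non-monotonic gap $[1-\d,1+\d]$ where $\phi_1$ vanishes, so the correct formula is the one in Lemma \ref{lem:u-decom}, namely $u=\pa_yu^s\int_{\f74}^y w_1\,dy'+\pa_yu^s\,\f{u(t,x,\f74)}{\pa_yu^s(t,\f74)}$, with the boundary term controlled by the middle-region estimate you already have. Your last paragraph analyses the $y\to+\infty$ weight issue using the invalid $\int_0^y$ formula and then reaches for $w_2=(\pa_yu^s)^2w_1$ to close it; neither is needed. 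With the correct anchor, Cauchy--Schwarz gives $\big|\int_{\f74}^y e^{y'/2}\bw_1\,dy'\big|\le Ce^{y/2}\|\bw_1\|_{L^2_y}$, which against $\mu\,|\pa_yu^s|\le Ce^{-y/2}$ (Proposition \ref{prop:shear}) is uniformly bounded, and the extra $\pa_yu^s$ factor multiplying $u(t,x,\f74)$ kills the weight in the second term. With that correction the argument matches the paper's.
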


\begin{proof}

The proof is split into two steps.

{\bf Step 1.}\, Estimate of $\|\pa_yu_\Phi\|_{H^{\f{1}{4}+\th,0}_\mu}$

First of all, we have
\begin{align*}
\|\pa_yu_\Phi\|_{H^{\f{1}{4}+\th,0}_\mu}\leq&
\|1_{[0,\f 34]}(y)\pa_yu_\Phi\|_{H^{\f{1}{4}+\th,0}_\mu}+\|1_{[\f 34,\f 74]}(y)\pa_yu_\Phi\|_{H^{\f{1}{4}+\th,0}_\mu}+\|1_{[\f 74,+\infty)}(y)\pa_yu_\Phi\|_{H^{\f{1}{4}+\th,0}_\mu}\\
\triangleq& I_1+I_2+I_3.
\end{align*}
For $y\in [0,\f 34]\cup [\f 74,+\infty)$, we have
\beno
\pa_yu(y)=\pa_y u^s\big(w_1-\pa_y(\f{1}{\pa_y u^s})u\big),
\eeno
which gives
\begin{align*}
I_1+I_3
\leq C\|(\bw_1)_\Phi\|_{H^{\f{1}{2},0}}+C\|u_\Phi\|_{H^{\f{1}{4}+\th,0}_\mu}.
\end{align*}
For $y\in [\f 34,\f 74]$, using $\pa_yu={h}(\pa_y^2 u^s)^{-\f 12}$, we get
\begin{align*}
I_{2}\leq&C\|h_\Phi\|_{H^{\f{1}{4}+\th,0}}.
\end{align*}

{\bf Step 2.} Estimate of $\|u_\Phi\|_{H^{\f{1}{4}+\th,0}_\mu}$

We have
\begin{align*}
\|u_\Phi\|_{H^{\f{1}{4}+\th,0}_\mu}\leq&
\|1_{[0,\f 34]}(y)u_\Phi\|_{H^{\f{1}{4}+\th,0}_\mu}+\|1_{[\f 34,\f 74]}(y)u_\Phi\|_{H^{\f{1}{4}+\th,0}_\mu}+\|1_{[\f 74,+\infty)}(y)u_\Phi\|_{H^{\f{1}{4}+\th,0}_\mu}\\
\triangleq& I_4+I_5+I_6.
\end{align*}
For $y\in [0,\f 34]$, we have
\beno
u(y)=\pa_y u^s\Big(\int_{0}^{y}w_1dy'\Big),
\eeno
which gives
\begin{align*}
I_4\le C\|(\bw_1)_\Phi\|_{H^{\f{1}{2},0}}.
\end{align*}
For $y\in [\f 34,\f 74]$, we have
\ben
\tu(y)=\int_{\f 34}^{y}\f{h}{(\pa_y^2 u^s)^{\f 12}}dy'+u(t,x,\f 34),
\een
from which and the estimate of $I_4$, we deduce that
\begin{align*}
I_5\leq
C\big(\|(\bw_1)_\Phi\|_{H^{\f{1}{2},0}}+\|h_\Phi\|_{H^{\f{1}{4}+\th,0}}\big).
\end{align*}
For $y\in [\f 74,+\infty)$, we have
\beno
\tu(y)=\pa_y u^s \Big(\int_{\f 74}^{y}w_1dy'\Big)+\pa_y u^s\f{u(t,x,\f 74)}{\pa_y u^s(t,\f 74)}
\eeno
from which and the estimate of $I_5$, we deduce that
\begin{align*}
I_{6}\leq&
C\big(\|(\bw_1)_\Phi\|_{H^{\f{1}{2},0}}+\|h_\Phi\|_{H^{\f{1}{4}+\th,0}}\big).
\end{align*}

Now, the inequality follows by putting the estimates of $I_1-I_6$ together.
\end{proof}
\medskip

Now we are in position to prove Theorem \ref{linearized main thm}.

\begin{proof}
The approximate solution can be easily constructed by adding the viscous term $-\epsilon^2\pa_x^2u$ to \eqref{eq:pran-L}. So, we just present the uniform estimate. For this end, we introduce
\begin{align*}
\mathcal{E}(t)\triangleq& \|(\bw_1)_\Phi\|_{H^{\f 12,0}}^2+\|(\bw_2)_\Phi\|_{H^{\f{3}{8},0}}^2+\|(w_2)_\Phi\varphi^{\f{1+\th_1}{2}}\|_{H^{\f{1}{2},0}}^2+\|h_\Phi\|_{L^{2}}^2,\\
\mathcal{D}(t)\triangleq& \|\pa_y(\bw_1)_\Phi\|_{H^{\f 12,0}}^2+\|\pa_y(\bw_2)_\Phi\|_{H^{\f{3}{8},0}}^2+\|\pa_y(w_2)_\Phi\varphi^{\f{1+\th_1}{2}}\|_{H^{\f{1}{2},0}}^2+\|\pa_yh_\Phi\|_{L^{2}}^2,\\
\mathcal{G}(t)\triangleq& \|(\bw_1)_\Phi\|_{H^{\f {3}{4}+\th,0}}^2+\|(\bw_2)_\Phi\|_{H^{\f{5}{8}+\th,0}}^2+\|(w_2)_\Phi\varphi^{\f{1+\th_1}{2}}\|_{H^{\f{3}{4}+\th,0}}^2+\|h_\Phi\|_{H^{\f{1}{4}+\th,0}}^2.
\end{align*}

Choosing $\la$ large enough and $\delta_2$ suitably small, we infer from Proposition
\ref{prop:w1-L}, Proposition \ref{prop:w2}, Proposition \ref{prop:w2-LW}, Proposition \ref{prop:linearized h} and Lemma \ref{lem:relation linearized uwh} that
\begin{align*}
\f{d}{dt}\mathcal{E}(t)+\lambda\mathcal{G}(t)+ \mathcal{D}(t)
\leq&
C\mathcal{E}(t).
\end{align*}
Then Gronwall's inequality gives
\begin{align}\label{eq:energy inequ-L}
\mathcal{E}(t)+\lambda\int_0^t\mathcal{G}(s)ds+\int_{0}^{t}\mathcal{D}(s)ds\leq \mathcal{E}(0)e^{Ct}
\end{align}
for any $t\in [0,T]$.
\end{proof}

\section{Note on well-posedness in Gevrey class 2}

Let us explain how to use a new unknown $h_1=\pa_y^2u-\f {\pa_y^3u^s} {\pa_y^2u^s}\pa_yu$ 
introduced in \cite{LY} to obtain the well-posedness of \eqref{eq:pran-L}  in Gevrey class 2 in our framework.
It is easy to verify that $h_1$ satisfies the following equation
\beno
\pa_th_1+u^s\pa_xh_1+\pa_xw_2-\pa^2_yh_1=\pa_t\Big(\f {\pa_y^3u^s} {\pa_y^2u^s}\Big)\pa_yu+\Big[\f {\pa_y^3u^s} {\pa_y^2u^s},\pa_y^2\Big]\pa_yu.
\eeno
The unknown $h_1$ is well-defined in non-monotonic domain. It is easy to show that 
\beno
(\bh_1)_\Phi\in L^\infty(0,T;L^2)\cap L^2(0,T;H^{\f14,0}),\quad \bh_1=\phi_3(y)h_1,
\eeno      
if $(\bw_2)_\Phi\in L^2(0,T;H^{\f34,0})$. On the other hand, if we know that $(\bh_1)_\Phi\in L^2(0,T;H^{\f14,0})$ which will imply 
$\pa_y^2u_\Phi\in L^2(0,T;H^{\f14,0})$ because of $\pa_yu_\Phi\in L^2(0,T;H^{\f14,0})$ by Lemma \ref{lem:relation linearized uwh},
we can show that $(\bw_2)_\Phi\in L^2(0,T;H^{\f34,0})$ by following the proof of Proposition \ref{prop:w2}.
More precisely, we can deduce that
\begin{align*}
&\f d{dt}\|(\bw_1)_\Phi\|_{H^{\f{1}{2},0}}^2+(\lambda-C)\|(\bw_1)_\Phi\|_{H^{\f{3}{4},0}}^2+\|\pa_y (\bw_1)_\Phi\|_{H^{\f{1}{2},0}}^2\\
&\quad\le C\Big(\|u_\Phi\|_{H^{\f{1}{4},1}_\mu}^2+\|(\bw_1)_\Phi\|_{H^{\f{1}{2},0}}^2+\|(\bw_2)_\Phi\|_{H^{\f{1}{2},0}}^2\Big),\\
&\f d{dt}\|(\bw_2)_\Phi\|_{H^{\f{1}{2},0}}^2+(\lambda-C)\|(\bw_2)_\Phi\|_{H^{\f{3}{4},0}}^2+\|\pa_y (\bw_2)_\Phi\|_{H^{\f{1}{2},0}}^2\\
&\quad\le C\Big(\|u_\Phi\|_{H^{\f{1}{4},1}}^2+\|(\bw_1)_\Phi\|_{H^{\f 12,0}}^2+\|(\bw_2)_\Phi\|_{H^{\f{1}{2},0}}^2+\|(\bh_1)_\Phi\|_{H^{\f 14,0}}^2\Big),
\end{align*}
and
\begin{align*}
&\f d{dt}\|h_{\Phi}\|_{L^2}^2+\lambda\|h_{\Phi}\|_{H^{\f{1}{4},0}}^2+\|\pa_yh_{\Phi}\|_{L^{2}}^2\\
&\quad\le C\Big(\|h_{\Phi}\|_{L^2}^2+\|u_{\Phi}\|_{H^{\f{1}{4},1}}^2+\|(\bw_1)_\Phi\|_{H^{\f{3}{4},0}}^2+\|(\bw_2)_\Phi\|_{H^{\f{3}{4},0}}^2\Big),\\
&\f d{dt}\|(\bh_1)_{\Phi}\|_{L^2}^2+(\lambda-C)\|(\bh_1)_{\Phi}\|_{H^{\f{1}{4},0}}^2+\|\pa_y(\bh_1)_{\Phi}\|_{L^{2}}^2\\
&\quad\le C\Big(\|h_{\Phi}\|_{L^2}^2+\|u_{\Phi}\|_{H^{\f{1}{4},1}}^2+\|(\bw_1)_\Phi\|_{H^{\f{3}{4},0}}^2+\|(\bw_2)_\Phi\|_{H^{\f{3}{4},0}}^2\Big).
\end{align*}
Thus, we can close the energy estimates in Gevrey class 2.

\section*{Acknowledgments}

Z. Zhang is partially supported by NSF of China under Grant
11371039 and 11421101.
\medskip

 \end{document}